\documentclass[oneside,english]{amsart}
\usepackage{mathptmx}
\usepackage[T1]{fontenc}
\usepackage[utf8]{inputenc}
\usepackage{fancyhdr}
\usepackage{babel}
\usepackage{url}
\usepackage{amsthm}
\usepackage{amssymb}
\usepackage[unicode=true,pdfusetitle,
 bookmarks=false,
 breaklinks=false,pdfborder={0 0 1},backref=false,colorlinks=false]
 {hyperref}
\usepackage[capitalise]{cleveref}

\numberwithin{equation}{section}

\theoremstyle{plain}
\newtheorem{thm}{Theorem}
\newtheorem{prop}[thm]{Proposition}
\newtheorem{lem}[thm]{Lemma}

\newtheorem{cor}[thm]{Corollary}
\theoremstyle{definition}
\newtheorem{defn}[thm]{Definition}

\newtheorem{exa}[thm]{Example}
\theoremstyle{remark}
\newtheorem{rem}[thm]{Remark}
\newtheorem*{acknowledgement*}{Acknowledgemnents}

\usepackage{mathrsfs}
\usepackage{amssymb}
\usepackage{dsfont}
\usepackage{verbatim}
\usepackage{url}
\usepackage{mathtools}
\usepackage{etoolbox}
\usepackage{leftidx}


\def\Z{\mathbb {Z}}
\def\Q{\mathbb {Q}}
\def\R{\mathbb {R}}
\def\C{\mathbb {C}}

\newcommand\Qp{\Q_p}
\newcommand\Zp{\Z_p}


\def\embed{\hookrightarrow}

\def\isom{\simeq}

\def\bs{\backslash}


\DeclareMathOperator{\diag}{diag}

\DeclareMathOperator{\Gal}{Gal}

\DeclareMathOperator{\id}{id}
\DeclareMathOperator{\Id}{Id}

\DeclareMathOperator{\Lie}{Lie}

\DeclareMathOperator{\Res}{Res}

\DeclareMathOperator{\Span}{Span}

\DeclareMathOperator{\supp}{supp}

\DeclareMathOperator{\Tr}{Tr}

\DeclareMathOperator{\vol}{vol}



\newcommand\abs[1]{\left| {#1} \right|}
\newcommand\norm[1]{\left\Vert {#1} \right\Vert}

\newcommand{\xdashrightarrow}[2][]{\ext@arrow 0359\rightarrowfill@@{#1}{#2}}


\DeclareRobustCommand
  \rddots{\mathinner{\mkern1mu\raise\p@
    \vbox{\kern7\p@\hbox{.}}\mkern2mu
    \raise4\p@\hbox{.}\mkern2mu\raise7\p@\hbox{.}\mkern1mu}}

\def\Ps{\mathcal{P}}
\newcommand{\Cc}{C_{\mathrm{c}}}

\newcommand{\Cic}{\Cc^{\infty}}


\newcommand\SL{\mathrm{SL}}
\newcommand\GL{\mathrm{GL}}

\newcommand\SO{\mathrm{SO}}

\newcommand\SU{\mathrm{SU}}



\newcommand\lieg{\mathfrak{g}}

\newcommand\A {\mathbb{A}} 
\newcommand\BB{\mathbb{B}}

\newcommand\DD{\mathbb{D}}

\newcommand\GG{\mathbb{G}}
\newcommand\HH{\mathbb{H}}

\newcommand\bLL{\mathbb{L}}
\newcommand\MM{\mathbb{M}}

\newcommand\PP{\mathbb{P}}
\newcommand\QQ{\mathbb{Q}}
\newcommand\RR{\mathbb{R}}
\newcommand\bSS{\mathbb{S}}
\newcommand\TT{\mathbb{T}}

\newcommand\XX{\mathbb{X}}

\newcommand\ZZ{\mathbb{Z}}

\newcommand\bH{\mathbf{H}}

\DeclareMathAlphabet{\mathcal}{OMS}{cmsy}{m}{n}

\newcommand\calH{\mathcal{H}}

\newcommand\calO{\mathcal{O}}
\newcommand\calP{\mathcal{P}}

\newcommand\calV{\mathcal{V}}


\newcommand\diff{\mathop{}\!\mathrm{d}}

\newcommand\dvol{\diff\!\vol}

\newcommand\dcross{\diff^{\kern-2pt\raisebox{1pt}{$\times$}}\kern-8pt}


\newcommand\Adele{\mathbb{A}}

\newcommand\AF{\Adele_F}

\newcommand\Af{\Adele_\mathrm{f}}

\newcommand\GF{\GG(F)}
\newcommand\Gi{G_\infty}
\newcommand\Gp{G_p}
\newcommand\Gv{G_v}

\newcommand\GA{\GG(\Adele)}
\newcommand\GAF{\GG(\AF)}
\newcommand\GAf{\GG(\Af)}

\newcommand\Kp{K_p}

\newcommand\Kf{K_\mathrm{f}}

\newcommand\OF{\calO_F}
\newcommand\Ov{\calO_v}

\newcommand\gf{g_\mathrm{f}}
\newcommand\gi{g_\infty}
\newcommand\kf{k_\mathrm{f}}


\newcommand\Qbar{\bar{\Q}}




\def\ae{a.e.\ }

\def\ie{i.e.\ }

\def\resp{resp.\ }




\newcommand\blfootnote{\xdef\@thefnmark{}\@footnotetext}

\DeclareFontFamily{U}{wncy}{}
\DeclareFontShape{U}{wncy}{m}{n}{<->wncyr10}{}
\DeclareSymbolFont{mcy}{U}{wncy}{m}{n}
\DeclareMathSymbol{\Sha}{\mathord}{mcy}{"58} 

\newcommand\mb{\bar\mu}

\newcommand{\tD}{\tilde{D}}
\newcommand{\tGG}{\tilde{\GG}}
\newcommand{\tHH}{\tilde{\HH}}
\newcommand{\Ggt}{G_{\infty,\geq2}}
\newcommand{\tGgt}{\tGG_{\geq2}}
\newcommand{\Gpo}{G_{p,1}}
\newcommand{\Kpo}{K_{p,1}}
\newcommand{\Gpgt}{G_{p,\geq2}}

\newcommand{\Ue}{{U_\epsilon}}
\newcommand{\Vd}{{V_\delta}}
\DeclareMathOperator{\denom}{denom}

\pagestyle{fancy}
\fancyhead{} 
\fancyhead[L]{Shem-Tov--Silberman: AQUE on $\Gamma\bs(\HH^{(2)})^r \times (\HH^{(3)})^s$}
\fancyhead[R]{\thepage}
\fancyfoot{}
\begin{document}

\title{Arithmetic quantum unique ergodicity for products of hyperbolic $2$- and $3$-spaces}

\author{Zvi Shem-Tov}
\address{Einstein Institute of Mathematics, The Hebrew University of Jerusalem,  Israel}
\email{\url{zvi.shem-tov@mail.huji.ac.il}}

\author{Lior Silberman}
\address{Department of Mathematics, University of British Columbia, Vancouver\ \ BC\ ~V6T 1Z2, Canada}
\email{\url{lior@math.ubc.ca}}
\thanks{LS was supported by an NSERC Discovery Grant}

\begin{abstract}
We prove the arithmetic quantum unique ergodicity (AQUE) conjecture for
sequences of Hecke--Maass forms on quotients
$\Gamma\bs (\HH^{(2)})^r \times (\HH^{(3)})^s$.  An argument by induction
on dimension of the orbit allows us to rule out the limit measure
giving positive mass to closed orbits of proper subgroups despite many returns of
the Hecke correspondence to neighborhoods of the orbit.
\end{abstract}

\subjclass[2010]{11F41; 37A45}

\maketitle
\tableofcontents{}


\section{Introduction}
\subsection{Statement of result}
We prove the Arithmetic Quantum Unique Ergodicity Conjecture for hyperbolic
$3$-manifolds.  More generally fix
integers $r,s\geq 0$ so that $r+s\geq 1$ and consider the symmetric space 
$$S = (\HH^{(2)})^r \times (\HH^{(3)})^s,$$
where $\HH^{(n)}$ denotes hyperbolic $n$-space.  Let $\Gamma$ be a lattice
in the isometry group 
\begin{equation}\label{eq:group}
G = \SL_2(\R)^r \times \SL_2(\C)^s.
\end{equation}  
Then the finite-volume manifold $Y = \Gamma \bs S$ is equipped with a family of
$r+s$ commuting differential operators coming from the Laplace--Beltrami
operator in each factor (the sum of which is the Laplace--Beltrami
operator of $Y$). Assume that $\Gamma$ is, in addition, a congruence
lattice (e.g. $\SL_2(\Z[i])\subset \SL_2(\C)$) so that the manifold $Y$ is also equipped with a ring of discrete averaging
operators, the Hecke operators, which further commute with the
differential operators noted above (see \cref{sec:adelic-quotient} for a
discussion of congruence lattices and \cref{sec:Hecke} for the construction
of the Hecke algebra).
Write $\dvol$ for the Riemannian volume element normalized to
have total volume $1$.  
\begin{thm}\label{thm:main1}
Let $\psi_j\in L^2(Y)$ be a sequence of normalized joint eigenfunctions of
both the ring of invariant differential operators and of the Hecke operators.
Assume that the Laplace-eigenvalues $\lambda_j\to \infty$.
Then the probability measures $\abs{\psi_j}^2(y)\dvol(y)$ converge in the weak* topology to $\dvol$. 
\end{thm}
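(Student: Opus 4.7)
The plan is to execute the microlocal-lift plus measure-classification strategy of Lindenstrauss and Silberman-Venkatesh, with an induction on the dimension of a hypothetical concentration orbit to handle the remaining intermediate case flagged in the abstract.

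First I would construct microlocal lifts $\tilde{\mu}_j$ of $|\psi_j|^2\dvol_Y$ to measures on $\Gamma\bs G$ (or $\Gamma\bs G/M$, with $M$ the compact centralizer of an $\R$-split Cartan) by a standard semiclassical wave-packet construction, factor by factor. After passing to a subsequential weak-* limit $\mu_\infty$, the pushforward to $Y$ is the corresponding weak-* limit of $|\psi_j|^2\dvol_Y$, so it suffices to show that $\mu_\infty$ is the normalized Haar measure on $\Gamma\bs G$. Because each $\psi_j$ is a joint eigenfunction of the ring of invariant differential operators, which contains one independent Laplace--Beltrami operator per factor, a standard WKB-type argument yields that $\mu_\infty$ is invariant under the full real split Cartan subgroup $A=A_1\times\cdots\times A_{r+s}\subset G$, where $A_i\cong\R$ is the diagonal torus in the $i$-th $\SL_2$ factor.

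Next I would bring in the Hecke eigenproperty. By amplification (Bourgain-Lindenstrauss in rank one, together with the product adaptation due to Silberman-Venkatesh applied at each archimedean factor) the Hecke eigenrelations at every unramified rational prime force $\mu_\infty$ to be Hecke-recurrent, and they rule out positive entropy being zero along any single $A_i$-direction for any ergodic component. A product version of the Einsiedler-Katok-Lindenstrauss measure-rigidity theorem for higher-rank abelian actions then implies that $\mu_\infty$ is a convex combination of algebraic measures, each supported on a closed orbit of some intermediate connected subgroup $H$ with $A\subseteq H\subseteq G$. In the present product setting such an $H$ is essentially assembled from subproducts of the $\SL_2$ factors, so the possible closed orbits $Z\subset\Gamma\bs G$ are themselves arithmetic quotients of products of $\HH^{(2)}$ and $\HH^{(3)}$ factors of strictly smaller total dimension.

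The remaining and central difficulty is to rule out all proper $H\subsetneq G$. I would proceed by induction on $\dim H$, the base case $\dim H=\dim A$ being handled by the original rank-one Lindenstrauss theorem applied to each factor separately. For the inductive step, suppose $\mu_\infty$ places positive mass on a closed $H$-orbit $Z\subset\Gamma\bs G$. The restriction of $\mu_\infty$ to $Z$, once renormalized, is itself a Hecke-recurrent, $A$-invariant, positive-entropy measure arising as a quantum limit on the lower-dimensional arithmetic quotient carrying $Z$; by the inductive hypothesis it must equal the normalized Haar measure on $Z$. The hard part, which I expect to be the principal technical obstacle, is the closure of the induction: the Hecke correspondence on the ambient $\Gamma\bs G$ does not preserve $Z$, yet it returns many times to every fixed neighborhood of $Z$, and a priori these returns are consistent with concentration on $Z$. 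One must quantitatively compare the number of these transverse Hecke returns against the positive-entropy contribution from the Cartan directions normal to $Z$, and show that any nontrivial mass on $Z$ would either violate Hecke recurrence on the full space or force an excess of entropy incompatible with the amplified Hecke bound. This rules out $H\subsetneq G$, leaves only $H=G$, and forces $\mu_\infty$ to be Haar, completing the proof.
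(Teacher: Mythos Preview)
Your outline has the right overall architecture (lift, positive entropy, rigidity, rule out intermediate supports, no escape of mass), but two of the steps are genuinely wrong as written.

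First, the claim that the limit is invariant under the \emph{full} split Cartan $A=\prod_i A_i$ is unjustified: the hypothesis $\lambda_j\to\infty$ concerns only the total Laplacian, so after passing to a subsequence you get growing eigenvalue in a \emph{single} factor, and the microlocal lift yields invariance only under that one $A_1$. Consequently the relevant rigidity input is not higher-rank Einsiedler--Katok--Lindenstrauss but the rank-one Einsiedler--Lindenstrauss theorem (with $G_v$-recurrence at a finite place), and this changes the list of possible ergodic components.

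Second, and this is the main gap, your description of the intermediate groups $H$ is incorrect. It is not true that the possible $H$ are ``assembled from subproducts of the $\SL_2$ factors''. When the distinguished factor is $G_1\simeq\SL_2(\C)$, the subgroup $H_1=\SL_2(\R)$ is a proper semisimple subgroup containing $A_1$, and the rigidity theorem produces components supported on sets of the form $\Gamma g\,H_1 M_1\,\prod_{i\ge2}G_i$ with $M_1\simeq U(1)$. Since $M_1$ does not normalize $H_1$, the set $H_1M_1$ is \emph{not} a subgroup (it generates all of $\SL_2(\C)$), so these supports are not closed orbits of any proper $H$ and are not ``arithmetic quotients of strictly smaller total dimension''. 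Your induction on $\dim H$ therefore has no object to restrict to, and the sentence ``the restriction \dots is itself \dots a quantum limit on the lower-dimensional arithmetic quotient'' has no content here. The paper's mechanism is quite different: one treats $L=gH_1M_1$ as an irreducible real algebraic subvariety of $G_1$, shows that its left-stabilizer in $\GG(F)$ is \emph{small} (up to finite index, contained in a torus or in a conjugate of $H_1$), builds a one-sided Hecke amplifier at completely split primes that avoids this small stabilizer, and then inducts on the dimension of algebraic \emph{subvarieties} of $L$ via a transverse/parallel dichotomy for Hecke translates. Your proposed ``entropy normal to $Z$ versus number of Hecke returns'' heuristic does not substitute for this: generically Hecke translates of a tube around $L$ return to that tube (because $L\cdot L^{-1}$ is open in $G_1$), so counting returns alone gives no bound; what saves the day is that almost all returns are transverse and drop dimension, while the parallel ones are governed by the small stabilizer one has arranged the amplifier to miss.
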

(Recall that $\mu_i$ converge weak* to $\mu$ if for every continuous function
$f$ with compact support,
$$
\int fd\mu_i\to\int fd\mu
$$
as $i\to\infty$.)

The case $r=1$, $s=0$ (i.e. hyperbolic surfaces) is due to Lindenstrauss
\cite{Lindenstrauss:SL2_QUE}.  When $s=0$ and $r>1$ (more generally, when
the eigenvalues of $\psi_j$ with respect to the Laplace operator corresponding
to one of the $\HH^{(2)}$ factors tend to infinity)
the claim follows from existing
techniques\footnote{We adapt previous positive entropy arguments to groups
over number fields in \cref{sec:homogeneous} and the claim then follows from
the measure classification result cited there since $\SL_2(\R)$ does not have
proper semisimple subgroups.  In some cases where the eigenvalue tends
to infinity in one factor but not in another the result was already
established even in the non-arithmetic setting by \cite{BrooksLindenstrauss:OneHecke}.}.
When $s>0$ those existing techniques place significant constraints on the
possible ergodic components of the limit measures under consideration but fall
short of proving that the limit measure is uniform, and we introduce
a new method for eliminating components other than the uniform measure.  This
is the main innovation of our paper; to state its realization here let $G_i$
be one of the factors isomorphic to $\SL_2(\C)$ in the product \eqref{eq:group}
defining $G$.
Identifying $G_i$ with $\SL_2(\C)$ for the moment let
$H_i = \SL_2(\R)$, and let $M_i = \{ \diag(e^{i\theta},e^{-i\theta})\}$
be the group of diagonal matrices with entries of modulus $1$.
Finally set $H = H_i \times \prod_{j\neq i} G_j$ (i.e. multiply $H_i$  
at the $i$th factor with the full isometry groups $G_j$ at the other factors). 
\begin{thm}\label{thm:main2}
Let $\mu$ be a measure on $X=\Gamma\bs G$ which is the weak-* limit of measures
$$\mu_j = \abs{\phi_j}^2 \dvol_X$$
where the $\phi_j \in L^2(X)$ are normalized Hecke eigenfunctions.
If $g=(g_j)_j \in G$ is such that the entries of $g_i$ are algebraic numbers
then
$$\mu(\Gamma gHM_i) = 0\,.$$
\end{thm}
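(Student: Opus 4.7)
The plan is to argue by contradiction, assuming $\mu(\Gamma g H M_i) > 0$, and to use Hecke recurrence at the $i$-th $\SL_2(\C)$ factor together with algebraicity of $g_i$ to force the conditional measure on $\Gamma g H M_i$ to be invariant under right translation by the compact circle $M_i$. Once $M_i$-invariance is established, the measure descends to one supported on $\Gamma g H$, which has one real dimension less; an outer induction on the dimension of the orbit $\Gamma g L$, over proper closed subgroups $L \subset G$ in the (finite) list of types arising in the classification of possible weak-$*$ limits, then closes the argument. The base cases---finite orbits and one-dimensional $A$-orbits---are handled by standard positive-entropy and measure-rigidity AQUE techniques for products of rank-one symmetric spaces.

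In more detail: let $\nu$ be the restriction of $\mu$ to $\Gamma g H M_i$, normalized to total mass $1$. The set $\Gamma g H M_i$ is a (possibly singular) $M_i$-bundle over $\Gamma g H$, and the goal of the main step is to show that $\nu$ is $M_i$-invariant. I would exploit Hecke eigenvalues of $\phi_j$ at primes $p$ of the arithmetic structure attached to the $i$-th factor; these give correspondences $T_p$ on $X$ under which the limit measure $\mu$ is, in a measure-theoretic sense, invariant. Because $g_i$ is algebraic, the conjugated correspondence $g^{-1} T_p g$ lies in the commensurator of $g^{-1} \Gamma g$; applying $T_p$ to a point $\Gamma g h m \in \Gamma g H M_i$ produces a finite family of images, of which a positive proportion return to $\Gamma g H M_i$ at positions shifted in the $M_i$-direction by an element $\theta_p \in M_i$ determined by the local Hecke data. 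The crucial arithmetic claim is that as $p$ varies over an infinite family of split (or otherwise tractable) primes, the set $\{\theta_p\}$ is dense in the circle $M_i$; given this, transferring the Hecke invariance of the eigenfunctions to the weak-$*$ limit and using continuity yields $M_i$-invariance of $\nu$.

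The principal obstacle is the density claim for $\{\theta_p\}$: one must work over the number field $K$ generated by the entries of $g_i$, analyze the local Hecke algebra at places unramified in $K$ (likely via the Satake isomorphism in the local group $\SL_2(K_v)$), and verify that the $M_i$-component of the Hecke shift is not confined to any proper closed subgroup of $M_i$. A secondary technical issue is that \emph{Hecke invariance of $\nu$} on a measure-zero submanifold must be extracted carefully from the approximating measures $|\phi_j|^2 \dvol_X$; here the outer inductive hypothesis plays a key role, since it rules out the possibility that the returns accumulate on still lower-dimensional sub-orbits of $\Gamma g H M_i$ (where the Hecke identities would degenerate) and thereby ensures that the shifts $\theta_p$ genuinely act on a substantial portion of $\nu$ and generate full $M_i$-invariance rather than mere invariance of a singular component.
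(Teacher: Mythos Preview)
Your central mechanism does not work: Hecke returns to $\Gamma g H M_i$ cannot produce a dense set of shifts $\theta_p \in M_i$. An element $s$ maps the submanifold $gHM_i$ to itself if and only if $g^{-1}sg$ lies in the left set-stabilizer of $HM_i$, and writing $g^{-1}sg = h_0 m_0$ with $m_0 = \diag(e^{i\theta},e^{-i\theta})$ one checks that $m_0 H M_i = H M_i$ forces $e^{2i\theta}\in\R$; this stabilizer is therefore $H\cdot\{\pm I,\pm\diag(i,-i)\}$, containing $H$ with index $2$. Any $\gamma\in\GG(F)$ arising from a Hecke correspondence that carries the whole orbit back to itself thus acts on the $M_i$-coordinate through this fixed four-element group, so the only possible $\theta_p$'s lie in $\{\pm I,\pm\diag(i,-i)\}$ and are certainly not dense in the circle. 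Translates \emph{not} in the stabilizer meet $gHM_i$ only along proper subvarieties and give no ``positive proportion of returns'' to a null set. The obstruction is the geometry of $HM_i$ inside $\SL_2(\C)$, not the arithmetic of the correspondences, so no refinement of the local Hecke or Satake analysis can repair it. A separate structural problem: even granting $M_i$-invariance, the pushforward of $\nu$ to $\Gamma gH$ is not itself a weak-$*$ limit of $|\phi_j|^2$-densities, so your outer induction hypothesis does not apply to it.

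The paper's argument runs in the opposite direction. Rather than exploiting returns to $gHM_i$, it constructs an amplifier whose support \emph{avoids} the stabilizer. At a prime $p$ splitting completely in a suitable extension one has $\Gpo\simeq\SL_2(\Qp)\times\SL_2(\Qp)$ with the complex conjugation $c_w$ exchanging the two factors; elements of $\GG(F)$ lying in (a conjugate of) $H$ are fixed by $c_w$ and hence land in the diagonal, whereas a ``one-sided'' Hecke operator supported in only one $\SL_2(\Qp)$ factor misses the diagonal entirely. With parallel translates thus eliminated, every translate in the support is transverse, the intersections $sU\cap U$ lie in proper algebraic subvarieties of $gHM_i$, and one inducts on the dimension of those subvarieties, bounding $\|\phi_{U_\epsilon}\|^2$ directly at each step rather than working with a conditional measure.
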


\subsection{Context: QUE Conjectures for locally symmetric spaces}
More generally, let $G$ be a semisimple Lie group, $K\subset G$ a maximal
compact subgroup, $\Gamma<G$ a lattice, and consider the locally symmetric
space 
$$
Y=\Gamma\bs G/K,
$$ 
with its uniform probability measure $dy$. The ring of $G$-invariant
differential operators on $G/K$ is commutative and commutes with the action
of $\Gamma$, hence acts on functions on $Y$.  Let
$\{\psi_j\}_{j\geq 1} \subset L^2(Y)$ be an orthonormal sequence of joint
eigenfunctions of this ring.  We remark that the Laplace--Beltrami operator
of $Y$ belongs to the ring and that writing $\lambda_j$ for the corresponding
eigenvalue for $\psi_j$ we necessarily have $\abs{\lambda_j}\to\infty$.
The \emph{Quantum Unique Ergodicity} (QUE) Conjecture for locally symmetric
spaces asserts the sequence $\{\psi_j\}$ becomes equidistributed on $Y$,
in the sense that
$$
\int_Yf(y)|\psi_j(y)|^2dy\to \int_Yf(y)dy,
$$
for all test functions $f\in C_c(Y)$.  In other words, the sequence
$|\psi_j|^2dy$ of probability measures converges in the weak-$*$ topology to
the uniform measure $dy$.  Next, when $\Gamma$ is a \emph{congruence}
lattice the space $Y$ also affords a commutative family of discrete averaging
operators, the \emph{Hecke operators}, which commute with the invariant
differential operators.  The \emph{Arithmetic} QUE Conjecture (AQUE)
is the restricted form of the QUE Conjecture for sequences of Hecke
eigenfunctions, that is for sequences where each $f_j$ is simultaneously
an eigenfunction of the ring of invariant differential operators and of the
ring of Hecke operators.

Investigation of Arithmetic QUE goes back to the work
\cite{RudnickSarnak:Conj_QUE} of Rudnick--Sarnak, who formulated
the QUE conjecture for hyperbolic surfaces and, more generally, for compact
manifolds of negative sectional curvature.  A major breakthrough on
this problem was due to Lindenstrauss, who in \cite{Lindenstrauss:SL2_QUE}
established AQUE for congruence hyperbolic surfaces, that is congruence
quotients $Y=\Gamma\bs \HH^{(2)}$ of the hyperbolic plane.
More precisely, Lindenstrauss proved that weak-$*$ limits as above were
proportional to the uniform measure.  This fully resolved the Conjecture
in the case of \emph{uniform} lattices, \ie when the quotient is compact.
For non-uniform lattices, however, the possibility remained that the
constant of proportionality was strictly less than one ("escape of mass"),
an alternative ruled out later by Soundararajan \cite{Sound:EscapeOfMass}.
Silberman--Venkatesh
\cite{SilbermanVenkatesh:SQUE_Lift,SilbermanVenkatesh:AQUE_Ent}
obtains generalizations of some of this work to the case of general
semisimple groups $G$, formulating the QUE and AQUE conjectures in the
context of locally symmetric spaces and obtaining further cases of AQUE.

\subsection{Discussion}
As stated above, we establish AQUE for congruence quotients of
products of hyperbolic 2- and 3-spaces.  The case
$Y=\SL_2(\ZZ[i])\bs \HH^{(3)}$ is already new and contains most of the 
the novel ideas of this paper; reading the paper with this assumption
in mind will give the reader most of the insight but avoid the technical
legerdemain needed to handle number fields with multiple infinite places.
For the rest of the introduction we concentrate on this case.

Thus let $G=\SL_2(\C)$ acts transitively by isometries on 
the space $S=\HH^{(3)}$ with point stabilizer the maximal compact subgroup
$K = \SU(2)$.  Let $\Gamma = \SL_2(\ZZ[i])$, the group of unimodular matrices
with Gaussian integer entries, which is indeed a lattice in $G$.
Then $Y=\Gamma\bs S$ is a finite-volume hyperbolic $3$-manifold (or more precisely, an orbifold) and 
$X=\Gamma\bs G$ is its frame bundle.  The action of
$A = \{ \diag(a,a^{-1}) \mid a>0 \}$ on $X$ from the right corresponds
to the geodesic flow on the frame bundle, with the commuting subgroup
$M = \{ \diag(e^{i\theta},e^{-i\theta}) \}$ acting by rotating the frame
around the tangent vector.

Let $H=\SL_2(\R) \subset G$.  Then $Y_H = \SL_2(\Z)\bs H / \SO(2) \subset Y$
is a finite-volume hyperbolic surface embedded in $Y$; its unit tangent bundle
is the finite-volume $H$-orbit $X_H = \SL_2(\Z)\bs H \subset X$.  Very relevant
to us will be the subset $\SL_2(\Z)\bs H M$, corresponding to the pullback
of the frame bundle of the hyperbolic $3$-fold to the surface.
Geometrically the unit tangent bundle of $Y_H$ embeds in the frame bundle of
$Y$ in multiple ways invariant by the geodesic flow, parametrized by choosing
a normal vector to the tangent vector at one point.  The set of choices
is thus parametrized by the group $M$ rotating the frame at the chosen point
around the tangent vector.

The Casimir element in the universal enveloping algebra of the Lie algebra $G$
acts on functions on $X$ and on $Y$, where it is proportional to the
Laplace--Beltrami operator.  In addition there is a family of
discrete averaging operators acting on functions on $X$ and commuting with the
right $G$ action, hence also on functions on $Y$
(thought of as $K$-invariant functions on $X$).  These \emph{Hecke operators}
can be constructed as follows: for each $g\in \SL_2(\Q(i))$
the set $[g] = \Gamma \bs \Gamma g \Gamma \subset \Gamma \bs \SL_2(\Q(i))$
is finite, so for a function $f$ on $X$ we can set
\begin{equation}
(T_gf)(x)=\sum_{s\in[g]}f(sx)\,.
\end{equation}
Since these operators are $G$-equivariant they also commute with the Casimir
element, and hence with the Laplace--Beltrami operator on $Y$.
They are bounded (the $L^2$ operator norm of $T_g$ is at at most the cardinality
of $[g]$) and it is not hard to check that the adjoint of $T_g$ is
$T_{g^{-1}}$.  It is a non-trivial fact that the $T_g$ commute with
each other, and hence are a commuting family of bounded normal operators.
For the sequel we will take a different point of view that gives better
control of these operators; see the construction in \cref{sec:Hecke}.

The proof of \cref{thm:main1} is given in \cref{sec:homogeneous}.
Our strategy is the one laid down by Lindenstrauss and followed by later work
on AQUE.
\begin{enumerate}
\item\label{microlocalstep}
The microlocal lift of \cite{SilbermanVenkatesh:SQUE_Lift}
(see also \cite{Lindenstrauss:HH_QUE} for the case $s=0$; the original
such constructions in much greater generality are due to
Shnirelman, Zelditch and Colin de Verdière
\cite{Schnirelman:Avg_QUE,Zelditch:PseudoDiffCalHypSurfaces,CdV:Avg_QUE})
shows that any weak-* limit on $Y$ as in \cref{thm:main1} is the projection
from $X$ of a limit $\mu$ as in \cref{thm:main2} which, in addition, is an
$AM$-invariant measure.
\item\label{entropystep} We show that almost every $A$-ergodic component
of $\mu$ has positive entropy under the $A$-action.
These arguments are standard (see
\cite{SilbermanVenkatesh:AQUE_Ent} generalizing
\cite{LindenstraussBourgain:SL2_Ent}) but we need to adjust them to handle
the fact that $M$ isn't finite and that the group is essentially defined
over a number field.  What is shown is that the measure of an
$\epsilon$-neighbourhood of a (compact piece) of
an orbit $xAM$ in $X$ decays at least as fast as $\epsilon^h$ for some $h>0$.
\item The classification of $A$-invariant measures in
\cite{EinsiedlerLindenstrauss:GeneralLowEntropy} implies that all the ergodic
components of $\mu$ other than the $G$-invariant measure
are contained in sets of the form $xHM$ where $xH$ is a finite-volume $H$-orbit
in $X$.
\item\label{newstep} In a closed $H$-orbit $\Gamma gH\subset X$ the entries of
$g$ are algebraic numbers, so by \cref{thm:main2} the exceptional possibilities
are contained in a countable family of sets each of which has measure zero.
It follows that $\mu$ is $G$-invariant.
\item It was shown in Zaman's M.Sc.\ Thesis \cite{Zaman:EscapeThesis} that
the measure $\mu$ is a probability measure (a generalization of Soundararajan's
proof \cite{Sound:EscapeOfMass} for the case of $\SL_2(\Z)\bs\SL_2(\R)$).
\end{enumerate}

The bulk of the paper is then devoted to realizing step \ref{newstep}.  Before
discussing how that is achieved, let us remark on its necessity.  
The group $G=\SL_2(\R)$ has no proper semisimple subgroups, so the issue
of ruling out components supported on orbits of such subgroups did not arise
in the original work of Lindenstrauss.  In followup
generalizations to higher-rank groups the difficulty was addressed by
choosing the lattice $\Gamma$ 
so that the subgroups $H$ that could arise did not have finite-volume orbits
on $X$ and $G$ itself was chosen $\R$-split so that $M$ was finite and could
be ignored.

We now go further.  The group $G=\SL_2(\C)$ has $H=\SL_2(\R)$ as a relevant
subgroup, forcing us to confront the problem of ruling out components
supported on finite-volume $H$-orbits head-on.  It is also not $\R$-split,
forcing us to contend with the infinitude of $M$.  Indeed,
equip a finite-volume $H$-orbit $xH$ with the $H$-invariant probability
measure $\nu$.  Translating $\nu$ by $m\in M$ gives further
$A$-invariant probability measures supported on the subsets $xHm$, and
averaging these measures to gives an $A$-invariant probability measure
supported on $xHM$.
The construction in step \ref{microlocalstep} is such that if $\nu$ occurs
in the ergodic decomposition of $\mu$ then so do its $M$-translates, so
we need to rule out the averaged measure on $xHM$ as occuring in $\mu$
(fortunately there are
only countable many such subsets).  Now previous technology as mentioned
in step \ref{entropystep} could show that ($\epsilon$-neighborhoods of
pieces of) subgroup orbits such as $xH$ have small
measure
but $xHM$ is not of this form.  Instead the subgroup generated by $HM$
is all of $G$, so the only orbit of a closed subgroup containing $xHM$
is all of $X$.

\subsection{Sketch of the proof of \cref{thm:main2}}
Let $U\subset xHM$ be a bounded open set.  Then showing that $\mu(xHM)=0$
amounts to bounding the mass Hecke eigenfunctions can give to
$\epsilon$-neighbourhoods $U_\epsilon$.  In other words, our goal is
to prove

\begin{equation}\label{eq:toshow}
\int_{\Ue}|\phi(x)|^2dx = o_\epsilon(1),
\end{equation}
for any Hecke-eigenfunction $\phi$, \emph{uniformly in} $\phi$.

We find a Hecke operator (``amplifier'') $\tau$ which acts on
$\phi$ with a large eigenvalue $\Lambda$ yet geometrically ``smears''
$\Ue$ around the space. 
The operator $\tau$ is a linear combination of operators $T_g$ so there is a
subset $\supp(\tau)\subset\SL_2(\Z[i])\bs \SL_2(\Q(i))$ so that
$$
\Lambda \phi(x) = (\tau\star \phi)(x) = \sum_{s\in\supp(\tau)} \tau(s).\phi(sx).
$$
Thus if $\abs{\tau(s)}\le 1$ for each $s\in\supp(\tau)$ then by Cauchy--Schwartz 
$$
\Lambda^2\int_\Ue\abs{\phi(x)}^2dx\le \#\supp(\tau)\int_\Ue\sum_{s\in\supp(\tau)}\abs{\phi(sx)}^2dx,
$$
so that

\begin{align}\label{naivedispersion} \mu_\phi(\Ue)
& \leq \frac{\#\supp(\tau)}{\Lambda^2}
       \sum_{s\in\supp(\tau)} \mu_\phi(s.\Ue) \\
& \leq \frac{\#\supp(\tau)}{\Lambda^2} \max_{s\in\supp(\tau)}
\#\{s'\in\supp(\tau) \mid s.\Ue\cap s'.\Ue\neq\emptyset\}
       \mu_\phi(\bigcup_{s\in\supp(\tau)} s.\Ue) \nonumber\\
& \leq \frac{\#\supp(\tau)}{\Lambda^2} \max_{s\in\supp(\tau)}
\#\{s'\in\supp(\tau) \mid s.\Ue\cap s'.\Ue\neq\emptyset\}\,.\nonumber
\end{align}

The best kind of smearing is thus if there were very few non-empty
intersections between the translates $s.\Ue$, where a bound would
follow as long as we could arrange for $\Lambda^2$ to be large
compared with the size of the support of $\tau$. That is too much
to hope for, but with better geometric and spectral arguments one requires
less stringent control over the intersections.
For various implementations of this strategy see
\cite{RudnickSarnak:Conj_QUE,LindenstraussBourgain:SL2_Ent,SilbermanVenkatesh:AQUE_Ent,BrooksLindenstrauss:OneHecke,ShemTov:OnePlaceHighRank},  
all in settings where $U$ is a piece of an orbit $xL$ for a subgroup $L$.
Then an intersection between $s.\Ue$ and $\Ue$ (say) implies that $s$ is
$O(\epsilon)$-close to the bounded neighbourhood $U U^{-1}$ in
$xLx^{-1}$ and if $\tau$ is such that the elements $s$ in its support have sufficiently
small denominators relative to $\epsilon$ then all the
$s$ causing the intersections
are jointly contained in a single conjugate of $L$.
For certain kinds of groups $L$ (e.g. tori) this implies that there will
be very few intersections.  There are also exceptional cases
("division algebras of prime degree", see
\cite[Prop.\ 4.10]{SilbermanVenkatesh:AQUE_Ent}) where the lattice $\Gamma$
is such that even for larger groups $L$ the set of $s\in\Gamma$ which create 
an intersection jointly generate a torus and again there are few intersections.

In the case we consider, namely $U\subset xHM$, the set $U U^{-1}$ (let alone $U_\epsilon U_\epsilon^{-1}$) is an
\emph{open} subset of
$G$ (a reflection of the fact that $HM$ generates $G$), so the number of
intersections may be large: $s \in U U^{-1}$ holds generically (rational points of large height are equidistributed in $G$),
and the argument fails.
Instead we bound the measure of the pieces through an induction argument on
their dimension. For this lift the picture to $G$, replacing $x\in X$ with a
representative $g\in G$ and $L$ with the submanifold $L = gHM$ which is,
in fact, an irreducible real algebraic subvariety of $G$.
An intersection $sL\cap L$ with $s\in\SL_2(\Q(i))$ is then also
a subvariety, and hence one of two possibilities must hold:
either the intersection has strictly smaller dimension,
in which case we call the intersection (and, by abuse of language, the element
$s$) \emph{transverse}, or it is not, in which case we call
$sL$ (and $s$) \emph{parallel} to $L$ and must actually have $sL = L$ by the
irreducibility of $L$.  By induction we may assume that the measures of all
transverse intersections $s.\Ue\cap \Ue$ are small (one needs to show that
$s.\Ue\cap \Ue$ is contained in a decreasing neighbourhood of $s.U\cap U$).
On the other hand the parallel elements stabilize the subvariety $L$
and this forces them to lie in a proper subgroup $S \subset \SL_2(\Q(i))$
(here we gained over the naive attempt which considered
the much larger subgroup generated by $\SL_2(\Q(i))\cap L L^{-1}$).

For example, the stabilizer of $HM$ under the left action of $G$ on itself
is exactly $H$, so parallel intersections can only arise from $s\in\SL_2(\Q)$.
We now introduce a final idea, allowing us to deal with a situation where
the proper subgroup $S$ noted above is not a torus.  Let $p\equiv 1\,(4)$ be a prime number
so that $p=(a+bi)(a-bi)$ for integers $a,b$ and set
$$g_p = \diag(a+bi,\frac{1}{a+bi})\,.$$
It turns out that for any Hecke eigenfunction $\phi$, the eigenvalue of
$\lambda_p$ of one of the two operators $T_{g_p}$, $T_{g_p^2}$
is at least comparable to the square root of the size of its support.
In addition, the supports $[g_p],[g_p^2]$ of these operators do not
intersect $\SL_2(\Q)$: complex conjugation exchanges the Gaussian prime
$a+bi$ with the distinct Gaussian prime $a-bi$, so the
ratio of matrices from the two double cosets must contain both primes
and cannot be real), so these operators themselves do not cause intersections.  
Making use of these as building blocks and combining the contribution from
many primes we then construct an amplifier which has good spectral properties
and at the same time avoids intersections caused by $\SL_2(\Q)$.

Each closed orbit $xHM$ is the projection of $L=gHM$ where the entries of $g$
are algebraic but need not be rational, so greater care must be taken to
define the complex conjugation which limits the intersection and correspondingly
the arithmetic progression from which we select the primes needs to be
smaller.  Also, we need to consider the stabilizers of the subvarieties
of $L$ that will appear in the recursive argument.
We show each of these stabilizers is either contained
in a conjugate of $H$, and then similarly to the case of $H$ itself we can avoid it entirely, 
or are tori (which are already known to cause few intersections).

\subsection{Organization of the paper}
In \cref{sec:notation} we fix our notation and examine the algebraic structure of forms of $\SL_2$ over number fields, constructing
the complex conjugations which control returns of Hecke translates
to real submanifolds.  We take the adelic point of view of the Hecke operators,
which is more convenient than the one used for the introduction.
In \cref{sec:homogeneous} we then reduce \cref{thm:main1} to \cref{thm:main2}.
Before giving the proof of \cref{thm:main2} in \cref{sec:proofs} we devote
\cref{sec:smallness} to classifying the stabilizers of the subvarieties that
can occur in the recursion and \cref{sec:amp} to constructing the amplifier.

\begin{acknowledgement*}
We would like to thank Elon Lindenstrauss and Manfred Einsiedler for very
useful suggestions and fruitful discussions. 
This work forms part of the PhD
thesis of the first author at the Hebrew University of Jerusalem.
The first author was supported by the ERC grant HomDyn no. 833423.

\end{acknowledgement*}

\section{Notations and background}\label{sec:notation}
For a comprehensive reference on the theory of algebraic groups over number
fields see \cite{PlatonovRapinchuk:AlgGps_NT}.  The theory of automorphic
forms over such groups is developed, for example, in the textbook
\cite{Bump:AutForms},  The particular case of forms of $\SL_2$ over number
fields is articulated in detail in \cite{Gelbart:AutFormsGL2Book}.

\subsection{Unit groups of quaternion algebras}
Varieties (including algebraic groups) will be named in blackboard bold font,

Fix a number field $F$, and let $V = \abs{F}$ be the set of places of $F$,
$V_\infty \subset V$ the set of archimedian places, divided into real
and complex places as $V_\infty = V_\R \sqcup V_\C$.  For a place $v\in V$
write $F_v$ for the completion of $F$ at $v$.  If the place is finite write
$\Ov\subset F_v$ for the maximal compact subring
$\{ x\in F_v \mid \abs{x}_v \leq 1\}$, and let $\varpi_v \in \Ov$
be a uniformizer.  We normalize the associated absolute value so that
$q_v = \abs{\varpi^{-1}}_v$ is the cardinality of the residue field
$\Ov/\varpi \Ov$. For a rational prime $p$ let $F_p = \prod_{v|p} F_v$ and
similarly set $F_\infty = \prod_{v\mid\infty} F_v$.
We write $\Af$ for the ring of finite adeles $\prod'_{v<\infty} (F_v:\Ov)$
and $\AF$ for the ring of adeles $F_\infty \times \Af$.

If $\GG$ is an $F$-variety and $E$ is an $F$-algebra write $\GG(E)$
for the $E$-points of $\GG$, equipped with the analytic topology if $E$ is a
local field extending $F$.  For a place $v$ of $F$ we also write
$G_v = \GG(F_v)$.  We further write $G = G_\infty$ for the manifold 
(Lie group) $\prod_{v\mid\infty} G_v$.

Let $D$ be a quaternion algebra over $F$, that is either the matrix algebra
$M_2(F)$ or a division algebra of dimension $4$ over $F$ (for a first reading
one can consider the special case $F = \Q(i)$, $D = M_2(F)$).  Let $\DD$ be the variety such that
$\DD(E) = D\otimes_F E$, and let $\det\colon\DD\to \A^1$ be the reduced norm.
Let $\GG = \DD^1$ be the algebraic group of elements of reduced norm $1$ in $\DD$:
$$\GG(E) = \{ g\in D\otimes_F E \mid \det(g) = 1 \}\,.$$

For each place $v\in V$ we have the algebra
$D_v = D\otimes_F F_v$.  When $v$ is complex necessarily $D_v \isom M_2(\C)$
and thus $G_v\isom \SL_2(\C)$.  When $v$ is real $D_v$ is either the split
algebra $M_2(\R)$ or Hamilton's quaternions $\bH$, and correspondingly $G_v$ is
one of the groups $\SL_2(\R)$ and $\bH^1 \isom \SU(2)$.  We suppose there are
$s$ complex places and $r+t$ real places divided into $r$ of the first form
and $t$ of the second so that
$$G = G_\infty \isom \SL_2(\R)^r \times \SL_2(\C)^s \times \SU(2)^t$$.

\subsection{Factoring $\Gi$ over a number field; complex conjugation}
We have the factorization $\Gi = \prod_{v\mid\infty} \Gv$. 
For a \emph{complex} place $v\in V_\C$, the usual extension of
scalars realizes $\Gv$ as the group of complex points of the $\C$-group
$\GG\times_F \C$.  However, when thought of as a Lie group this group has
closed subgroups which are not complex -- the key example for us being
the subgroup $\SL_2(\R)$ of $\SL_2(\C)$.  We thus would like to think of the
group $\Gv$ as the group of real points of an algebraic group defined over an
extension of $F$.

Let $N$ be any finite Galois extension of $\Q$ containing $F$. 
Given a complex place $v$ we construct a number field $E\subset N$ of index $2$ equipped with a real place $w$ and an algebraic group $\tGG$ defined over $E$ factoring as an $E$-group
into a product $\tGG = \prod_{i=1}^{r+t+2s} \tGG_i$ such that:

\begin{enumerate}
\item The group $\GF$ embeds in $\tGG(E)$.
\item We have an isomorphism
$$\tGG(E_w) = \prod_{i=1}^{r+t+s} \tGG_i(E_w) \to \Gi$$
where the factors correspond.
\item For a rational prime $p$ splitting completely in $N$ we have an embedding $E\embed \Qp$ such that
for $1\leq i\leq s$ we have $\tGG_i(\Qp) \isom (\SL_2(\Qp))^2$, for
$s<i\leq r+t+s$ we have $\tGG_i(\Qp) \isom \SL_2(\Qp)$ and the resulting
factorization of $\tGG(\Qp)$ is isomorphic to the factorization of $G_p$
into $2s+r+t$ copies of $\SL_2(\Qp)$ coming from the $2s+r+t$ places over $F$
over $\Qp$. We fix an embedding of $E$ in $\Q_p$ as above and use it to view $E$ as a subfield of $\Q_p$. 
\item The latter two identifications are compatible with the first embedding, in
such a way that if an element $\gamma \in \GF$ embeds to a real-valued matrix
in $\tGG_1(E_v)$ (say) then its image in $\tGG_1(\Qp)$ lies in the diagonal
subgroup of $\tGG_1(\Qp) \isom (\SL_2(\Qp))^2$.  This will allow us to choose
Hecke operators which "avoid" a real subgroup in a particular complex place.
\end{enumerate}

Thus let $v$ be a complex place of $F$ and let $N$ be any finite Galois 
extension of $\Q$ containing $F$
(in \cref{sec:homogeneous} $N$ will the Galois closure of a splitting field
of $D$; in \cref{sec:amp} we make a different choice depending on the
subvariety we are trying to avoid).
Let $w$ be a place of $N$ extending $v$, let $c_w\in\Gal(N/\Q)$
the element acting as complex conjugation in the completion of $N$ at $v$,
let $E = N^{c_w}$ be the fixed field of $c_w$, and also write $w$ for the
restriction of $w$ to $E$ -- a real place of that field.  The situation is
simpler when $F$ contains $E$ (e.g. our running example of $F = N = \Q(i)$
where $E=\Q$) but we will not assume this is the case.

Writing $F = \Q[x]/(f)$ for an irreducible polynomial $f\in\Q[x]$ and
factoring $f$ in the extensions $N$ and $E$ of $\Q$ we see that the $E$-algebra
$A = F\otimes_\Q E$ is étale, we have $A \isom \prod_i E_i$ which we can
interpret both as an isomorphism of $F$-algebras and as an isomorphism of
$E$-algebras.  From the first point view composing each embedding
$F\embed E_i$ with the embedding $w\colon E_i\to \C$ we obtain an enumeration
of the archimedean places of $F$, with $E_i \isom N$ for complex places
(say those are indexed by $1\leq i\leq s$) and $E_i \isom E$ for real places
(say for $s<i\leq s+r+t$).  Without loss of generality we assume the inclusion
$F\embed E_i\embed \C$ is the place $v$ fixed above.

Now thinking of $A$ as a $E$-algebra let $\tGG = \GG \times_F A$ thought of as
an algebraic group over $E$.  Equivalently
$\tGG = \left(\Res^F_\Q \GG\right) \times_\Q E$ (Weil restriction of scalars).
The factorization of $A$ then gives a factorization $\tGG = \prod_i \tGG_i$
as groups over $E$, where $\tGG_i(E_w) \isom \GG(F_v)$ if $v$ is the $i$th
archimedean place of $F$ in terms of our enumeration above.  Having done this
we also write $G_i$ for $G_v$, especially in situations where we would like
$v$ to vary over finite places.

The embedding $F\embed A$ gives an inclusion $\GF \to \tGG(E)$. 

Finally let $p$ be a rational prime which splits completely in $N$ (hence
also in $F\subset N$).  Choosing any place $w_p\colon N\to\Qp$ lying over $p$
all embeddings of $F$ into $\Qp$ factor through $w_p$ (as above, with different
embeddings of $F$ into $N$). Restricting $w_p$ to $E$ we then have
$$E_i \otimes_E \Q_p \isom
\begin{cases} \Qp & E_i = E\\ \Qp\times\Qp & E_i=N\end{cases} $$
since when $E_i = N$ there are two places of $N$ lying over the place $w_p$
of $E$.  Furthermore, $\Gal(N/E) = \{ c_w,\id\}$ acts transitively on these
places so that $c_w$ swaps them.

Finally let $\gamma\in\GF$ and suppose that its image in $\tGG_1(E_w)$ lies
in the subgroup $g_1 \tGG_1(E_w)^{c_w} g_1^{-1}$ for some $g_1 \in \tGG_1(E)$
(we think of $\tGG_1(E_w)^{c_w}$ as the "standard" copy of $\SL_2(\R)$ in
$\tGG_1(E_w) \isom \SL_2(\C)$).
Since the image of $g_1^{-1}\gamma g_1 \in \tGG_1(E)$ is fixed by $c_w$,
this persists when we embed our group in $\Qp$ using the place $w_p$, and it
follows that the image of $\gamma$ in $\tGG_1(E_{w_p})$ lies in
$g_1 \tGG_1(E_{w_p})^{c_w} g_1^{-1}$, in other words in a particular conjugate
of the diagonal subgroup $\tGG_1(E_{w_p})^{c_w}$ (so-called because when we
identified $\tGG_1(E_{w_p})^{c_w}\isom\left(\SL_2(\Qp)\right)^2$ the Galois
automorphism $c_w$ acts by exchanging the factors).

Before going any further, it may be helpful to have two concrete examples
of the setup so far.
\begin{exa}
Let $F=\Q(\xi)$ where $\xi^3 = 2$ and let $D = M_2(F)$ be the matrix algebra
over F.  The field $F$ has one real place $v'$
(coming from the unique real cube root of $2$) and one complex place $v$
(coming from the two non-real cube roots), so the corresponding Lie group
is $\SL_2(\C)\times\SL_2(\R)$ with the symmetric space $\HH^{3}\times\HH^{2}$.
Let us now examine how this fits in
the framework above.

For this let $\omega$ be a cube root of unity so that
$N = F(\omega) = F(\sqrt{-3})$ is the normal closure of $F$ in which the
Galois conjugates of $\xi$ are $\xi,\xi\omega,\xi\omega^2$.

We choose the complex embedding $v$ so that\footnote{We use the mumber theory
convention $e(z) = \exp(\pi iz)$} $v(\xi) = \sqrt[3]{2} e(1/3)$ and
extend this to $N$ by $w(\omega) = e(1/3)$.  The Galois group $\Gal(N/\Q)$
is the full permutation group on the cube roots of $2$ in $N$;
among its elements let $c_w$ be the one fixing
$\xi \omega^2$ and exchanging $\xi,\xi\omega$.  
Note that $w(\xi\omega^2=\sqrt[3]{2} \in \C$ so $c_w$ is complex conjguation
at $w$, and one can also check that $c_w(\omega) = \omega^2$.  We thus
obtain the subfield $E = N^{c_w} = \Q(\xi\omega^2)\subset N$,
a field which is abstractly isomorphic to $F$ but disjoint from it as a
subfield of $N$.

Next, we have $F\otimes_\Q E = E[x]/(x^3-2) \isom E[x] / (x^2+\xi\omega^2 x + \xi^2\omega)(x-\xi\omega^2) \isom E_1 \oplus E_2$
where $E_1 \isom N = E(\omega)$ and $E_2 \isom E$ (and also $E_2 \isom F$!).
Extending scalars in $D$ we obtain the algebra
\begin{align*}
\tD & = D \otimes_F (E_1\oplus E_2) \\
    & \isom (D\otimes_F E_1) \oplus (D\otimes_F E_2)
    & \isom (D\otimes_F N) \oplus (D\otimes_F E)
\end{align*}

It follows that $\tD \isom M_2(N) \oplus M_2(E)$.  Restricting to elements
of determinant one produces from the algebras $D$, $\tD$ the $F$-group
$\GG(K) = \{ g \in M_2(K) \mid \det(g)=1\}$ and the $E$-group
$\tGG(K) = \{ (g_1,g_2)\in M_2(K[\omega])\times M_2(K) \mid \det(g_1)=\det(g_2)=1\}$
for any field extension $K$ of $F$, $E$ respectively.  It may seem odd to
distinguish between $E$ and $F$ (which are after all isomorphic fields in this
example), so let us put this distinction use.
We have $G_\infty = \SL_2(\C) \times \SL_2(\R)$, but also
$\tGG(E_w) \isom \SL_2(N_w) \times \SL_2(E_w) \isom \SL_2(\C) \times \SL_2(\R)$.
Furthermore the isomorphism of the two groups $G_\infty \isom \tGG(E_w)$
preserves the diagonal inclusion of $\SL_2(F)$ in both of them -- exactly
because the complex place $w$ of $N$ restricts to the complex
place of $F$ but the \emph{real} place of $E$.  Furthermore complex
conjugation in the first factor is now algebraic: it is the obvious
automorphism of the algebra $K[\omega] = K[x]/(x^2+x+1)$.

Thus let $p$ be a rational prime which splits completely in $N$.  Let
$v_1,v_2,v_3$ be the three places of $F$ lying over $p$, giving us the group
$G_p = \SL_2(F_{v_1}) \times \SL_2(F_{v_2}) \times \SL_2(F_{v_3})$
together with the diagonal embedding of $\GG(F)=\SL_2(F)$ in $\Gi\times G_p$.
In the proof of \cref{thm:main2} we will obtain elements $\gamma\in\GG(F)$ whose image in the factor
$G_v \isom \SL_2(\C)$ lie in $\SL_2(\R)$, that is are fixed by the complex
conjugation, and we will want
to choose elements of $G_p$ which "avoid" those $\gamma$ in some sense.
For this let $w_1$ be a place of $E$ lying over $p$.  Observe that since $p$
splits in $N$, $\Qp$ contains the cube roots of unity,
$E_{w_1}[\omega] \isom \Qp\oplus \Qp$ and this isomorphism gives us the other
two places of $F$ in
$$\tGG(E_{w_1}) \isom \SL_2(\Qp\oplus\Qp)\times \SL_2(\Qp) \isom G_p\,.$$
However, all this is compatible with the action of $c_w$ in the first factor,
mapping $\omega$ to $\omega^2=-1-\omega$, which amounts to exchanging the two
copies of $\Qp$.  Accordingly let $\gamma \in \GF$ be such that $v(\gamma)$
is real.  Then the image of $\gamma$ in $\tGG(E_w)$ is fixed by $c_w$,
so the same must hold for the image in $\tGG(E)$ and hence the image
in $\tGG(E_{w_1})$.  In short, we have chosen the factors in the isomorphism
$G_p \isom (\SL_2(\Qp))^3$ so that those $\gamma$ which are real in the first
complex place are diagonal (have the same image) in the first two $p$-adic
places.  We could then "avoid" (in the precise sense we need) such
$\gamma$ by choosing Hecke operators (see below) living at exactly one of
the two places exchanged by $c_w$.
\end{exa}

\begin{exa}Continuing with the same field $F$, let
$D=\left(\dfrac{-\xi,-\xi}{F}\right)$ be the quaternion algebra with
$F$-basis $1,i,j,k$ such that $i^2=j^2=-\xi$ and $ij=-ji=k$.  The reduced
norm of this algebra is then the quadratic form
\begin{equation}\label{normform}
\det(a+bi+cj+dk)=a^2+\xi b^2+\xi c^2+\xi^2d^2
\end{equation}
which is positive definite in the real embedding $v'$ for which
$v'(\xi)=\sqrt[3]{2}$.  Then $D_{v'}$ is a noncommutative real division algebra,
in other words is isomorphic to Hamilton quaternions -- whereas we still have
$D_v\isom M_2(\C)$ since the only complex division algebra is $\C$.

Viewing the quaternions as a 2-dimensional vector space over $\C$ and letting the
invertible quaternions act on themselves by multiplication shows that the group
of norm-$1$ quaternions isomorphic to $\SU(2)$, and hence for the group $\GG$
of norm-$1$ elements in $D$ we have
$$
\Gi\isom \SL_2(\C)\times\SU(2)
$$
With the corresponding symmetric space $\HH^{3}$.  The rest of the discussion
in the previous example continues unchanged: for any prime $p$ we can still
realize $G_p \isom \tGG(E_{w_1})$ so that complex conjugation in the first
factor of $\tGG$ corresponds to both complex conjugation in $G_v$ and to
swapping the first two factors in $G_p \isom \left(\SL_2(\Qp)\right)^3$.

This elucidates the extra difficulty of proving our results in the case
of lattices such as $\SL_2(\OF)$ which do not act cocompactly on $\HH^{3}$ but
act on it in a more complicated fashion than a Bianchi group such as $\SL_2(\Z[i])$.
Consider an element of $\SL_2(F)$ which is real in the complex embedding.
When $F=\Z[i]$ (say) complex conjugation is a Galois automorphism of $F$ and
it is clear how it acts on $\SL_2(F_{v_1})\times\SL_2(F_{v_2})$ for the two
places of $F$ lying over a rational prime $p$ splitting in $F$; elements
of $\SL_2(\Q(i))$ which are real in the complex embedding lie in $\SL_2(\Q)$
and clearly embed diagonally.  
On the other hand in the present example $F$ has no Galois automorphism and
so showing that elements of $D^1$ that are real at the complex place
have identical images at two of the three $p$-adic places most naturally
involves going beyond $F$.
\end{exa}

\subsection{The real group}\label{sec:RealGp} 
We return to the isomorphism
$\Gi \isom \left(\SL_2(\C)\right)^s \left(\SL_2(\R)\right)^r \left(\SU(2)\right)^t$
and fix some subgroups of this group.

At each infinite $v$ where $D_v$ splits, let $A_v \subset G_v$ be the group 
corresponding to the group of diagonal matrices with positive real entries
under the isomorphism above.  Also let $K_v$ be a compatible maximal compact
subgroup (corresponding to the subgroup $\SU(2)$ at a complex place, $\SO(2)$
at a real place).  From these let $M_v = Z_{K_v}(A_v)$ be the centralizer of
$A_v$ in $K_v$, so that $M_v$ is the group $\{\pm I\} \subset \SL_2(\R)$ at
a real place or the group $U(1) \subset \SL_2(\C)$ of diagonal matrices 
with inverse entries both of modulus $1$.  Observe that in either case the group
$A_v M_v$ consists of the $F_v$-points of a maximal $F_v$-split algebraic
torus of $\GG\times_F F_v$.

At the real places $v$ where $D_v$ remains a division algebra 
$K_v = G_v \isom \SU(2)$ is a maximal subgroup.  With this choice
$K = K_\infty = \prod_{v\mid\infty} K_v$ is a maximal compact subgroup of
$G$ and $G/K \isom \left(\HH^{(3)}\right)^s \left(\HH^{(2)}\right)^r$.

We fix (arbitrarily) once and for all a left-invariant Riemannian metric on
$G$ (equivalently, a positive definite quadratic form on $\Lie G$).
This induces a left-invariant metric on $X=\Gamma\bs G$ where the quotient
map is non-expansive.  For a subset $U\subset G$ we write $\Ue$ for its
$\epsilon$-neighborhood with respect to this metric.  This metric is used
in \cref{sec:homogeneous,sec:proofs}, but in both cases we can first restrict
our attention to compact subsets of $G$.  The choice of metric thus affects
some overall constants but not the bottom line.  For example in 
\cref{sec:homogeneous} we establish bounds of the form
$\mu(\Ue) \leq C \epsilon^h$; with a different metric $\Ue$
would be contained in $U_{c\epsilon}$ with respect to the new one and the
bound would be identical except for the value of the constant $C$.

\subsection{$p$-adic and adelic groups}\label{sec:adelic-quotient}
Let $R\subset D$ be an order, that is a subring which is an $\OF$-lattice in
$D$.  Then for every finite place $v$ of $F$,
$R_v = R\otimes_{\OF} \Ov$ is an order in $D_v$,
that is a compact open $\Ov$-subalgebra of $D_v$.
Its group of units $R_v^1$ is then a compact subgroup of $G_v$ which
is a maximal compact subgroup $K_v$ at almost all places.

For all but finitely many places, $D_v \isom M_2(F_v)$ and 
then $G_v\isom\SL_2(F_v)$ and $K_v \isom \SL_2(\Ov)$
(we don't choose $K_v$ at the finitely many places where $D_v$ is a division
algebra or where $R_v$ is not a maximal order).

Let $\GAf = \prod'_{v<\infty} (G_v:K_v)$ be the restricted direct product
of the $G_v$ and let $\GA = \Gi \times \GAf$.  Then the diagonal embedding
$\GF \embed \GA$ realizes $\GF$ as a lattice there.  Fix an open compact
subgroup $\Kf\subset \GAf$.  Then there exists a finite set $S$ of
finite places (including all places where $K_v$ was left undefined above)
such that $\Kf = H \times \prod_{S\not\ni v<\infty} K_v$ for some open
compact subgroup $H<\prod_{v\in S} G_v$.
We will generally ignore all places in $S$.

Let $\Gamma = \GF \cap \Kf$.  Its image in $\Gi$ is a lattice, and we have
the identification\footnote{We use here that $\GG$ is a form of the simply
connected algebraic group $\SL_2$; in general the adelic quotient would
correspond to a disjoint union of quotients $\Gamma\bs G$}
$$X = \Gamma\bs G \isom \GF \bs \GA / \Kf$$
given by mapping the coset $\Gamma \gi$ to the double coset
$[\gi,1] = \GF (\gi,1) \Kf$ (here $1$ is the unit element of $\Af$).

A congruence subgroup of $G$ is one that contains a lattice $\Gamma$ as
above as a finite-index subgroup.  Since equidistribution modulo $\Gamma$
implies equidistribution modulo any lattice containing it (assuming the eigenfunctions
are properly invariant) we assume the lattice has this form without loss
of generality.

We further set $Y = X/K = \Gamma \bs S$ where $S=G/K$ is the symmetric
space of $G$.  Equipping $X$ with the $G$-invariant probability measure
we study functions ("automorphic forms") in $L^2(X)$, and identify $L^2(Y)$
with the subset of $K$-invariant functions.  For each noncompact factor
$G_i$ of $G$ let $\omega_i$ be the Casimir element in the universal enveloping
algebra $U(\lieg_i\otimes_\R \C)$.  Then $\omega_i$ acts on the right on
smooth functions on $G$ and $X$, equivariantly with respect to the right
$G$-actions.  It thus descends to a differential operator on functions on $S$
where it coincides with the Laplace--Beltrami operator on the irreducible
symmetric space $G_i/K_i$. A \emph{Maass form} on $Y$ is a function
$\psi\in L^2(Y)$ which is a joint eigenfunctions of the $\omega_i$.

\subsection{Hecke operators}\label{sec:Hecke} 
For every finite place $v$, the convolution algebra of locally constant
compactly supported functions on the totally disconnected group $G_v$
acts on the right by convolution on the space of smooth functions on
$\GF\bs\GA$.  At a place $v$ where $K_v$ is defined and contained in $\Kf$,
the subalgebra of bi-$K_v$-invariant functions
$\calH_v = \calH(G_v:K_v) = \Cic(K_v \bs G_v/K_v)$ preserves the subspace
of right-$\Kf$ invariant functions, and hence acts on the space of functions
on $\GF\bs\GA/\Kf = \Gamma\bs G$.  We note that the actions of the different
$\calH_v$ commute with each other and with the right $G$-action on our space,
and call the algebra of operators on functions generated by all of them the 
\emph{Hecke algebra}.

Let $\calP$ be the set of rational primes $p$ which split completely in $N$,
and such that every place $v$ of $F$ above $p$ has $K_v$ as a factor of $\Kf$,
a set of positive natural density in the primes. Those are the
primes whose Artin symbol in
$\Gal(f) = \Gal(N/\Q)$ is trivial and the claim follows immediately
from the effective Chebotarev Density Theorem \cite[Theorem 1.2]{PlatonovRapinchuk:AlgGps_NT}.  
For such $p$ let $\calH_p = \otimes_{v\mid p} \calH_v$ be the
"Hecke algebra at $p$".  We will only consider Hecke operators in the
restricted Hecke algebra $\calH = \otimes'_{p\in \calP} \calH_p$ generated
by the $\calH_p$.

Since the right actions of $G_v$ and $G_i$ on $\GF\bs\GAF$ commute, the
Hecke operators commute with the differential operators of the previous section.
A \emph{Hecke--Maass form} is a Maass form $\psi\in L^2(Y)$ which is also a joint
eigenfunction of the Hecke algebra.  Since the group actions commute it also
follows that if $\phi\in L^2(X)$ is any other element of the irreducible
representation generated by $\psi$ then $\phi$ is also an eigenfunction of
the Hecke algebra with the same eigenvalues as $\psi$.

\section{Homogeneity}\label{sec:homogeneous}
In this section we invoke the necessary machinery to deduce
\cref{thm:main1} from \cref{thm:main2}.  The main new ingredient
(which was already known to experts) is the statement of a so-called
``diophantine lemma'' (\cref{prop:dioph} below) for groups defined over
a number field.

As described in the introduction let $\{\psi_j\}_{j\geq1} \subset L^2(Y)$ be
a normalized sequence of Hecke--Maass forms with Laplace eigenvalues tending
to infinity.  Let $\mb_j$ be the corresponding probability measures on $Y$
(recall that those are the measures with density $\abs{\psi_j(x)}^2$ with
respect to the Riemannian measure); our ultimate goal is to show that the
$\mb_j$ converge to the normalized Riemannian volume on $Y$, or equivalently
that this is the only subsequential limit.  Accordingly (passing to a subsequence)
we assume the $\mb_j$ converge weak-* to a measure $\mb$ on $Y$. 
The main result of  \cite{Zaman:EscapeThesis} is that any such limiting measure $\mb$ on $Y$ 
is a probability measure, even if $Y$ non-compact. 

Again passing to a subsequence there is an infinite place $v$ such $G_v$
is non-compact and such that the Laplace--Beltrami eigenvalues of $\psi_j$
with respect to the Laplace operator at $v$ tend to infinity; without loss
of generality we may assume it is the first place in our enumeration.
Then by the microlocal lift of
\cite{SilbermanVenkatesh:SQUE_Lift} there are Hecke eigenfunctions
$\phi_j\in L^2(X)$ such that any subsequential weak-* limit $\mu$ of the
associated measures $\mu_j$ has the following properties:

\begin{enumerate}
\item $\mu$ projects to $\mb$ under the map $X\to Y$ (in particular, $\mu$ is
      a probability measure).
\item $\mu$ is $A_1$-invariant.
\end{enumerate}

Passing to a subsequence yet again we may assume that the $\mu_j$ themselves
converge, and would like to show that the limit $\mu$ is the $G$-invariant
probability measure on $X$.  At this point the differential operators exit
the stage: in the sequel we only use the fact that $\phi_j$ are normalized
Hecke eigenfunctions on $X$ and that $\mu$ has the two properties above.

The rest of the section is divided as follows: in \cref{sec:posent}
we show that every ergodic component of $\mu$ has positive entropy with
respect to the action of $A_1$.  In \cref{subsec:rigidity} we
then invoke a measure rigidity theorem of Einsiedler--Lindenstrauss and interpret its results, 
classifying the possible ergodic components of
$\mu$.

\subsection{Positive Entropy}\label{sec:posent}

Let $a\in A_1$ be non-trivial.  Under the isomorphism $G_1 \isom \SL_2(F_1)$,
$a$ is a diagonal matrix with distinct real positive entries,
so $T_1 = Z_{G_1}(a)$ is the group $A_1 M_1$ of all diagonal matrices in $G_1$.
Setting $\Ggt = \prod_{i\geq 2} G_i$, the centralizer of $a$ in $G$ is then
$T = T_1 \cdot \prod_{i\geq 2} G_i$. 

For a compact neighborhood of the identity $U\subset T$ recall our notation
$\Ue$ for an $\epsilon$-neighborhood of $U$ in $G$.  We will
establish the following result:

\begin{prop}There is a constant $h>0$ such that for any compact subset
$\Omega \subset G$ (expected to be large) and any $U$, we have for any
$\epsilon$ small enough (depending on $\Omega$, $U$) and any Hecke
eigenfunction $\phi_j\in L^2(X)$ that for all $g\in \Omega$,
$$\mu_j\left(\Gamma g\Ue \right) \ll_{\Omega,U} \epsilon^h\,.$$
\end{prop}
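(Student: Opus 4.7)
The plan is to run a Hecke amplification argument in the tradition of \cite{LindenstraussBourgain:SL2_Ent} and its generalizations by Silberman--Venkatesh \cite{SilbermanVenkatesh:AQUE_Ent} and Brooks--Lindenstrauss \cite{BrooksLindenstrauss:OneHecke}, modified to accommodate both the non-finite group $M_1$ at the first complex place and the arithmetic over $F$ rather than $\Q$. First, for a parameter $L = L(\epsilon)$ to be optimized at the end, I construct an amplifier $\tau \in \calH$ as a linear combination of degree-one Hecke operators $T_p$ and $T_{p^2}$ for primes $p \in \calP \cap [L,2L]$. The identity $T_p^2 = T_{p^2} + (\text{lower order})$ in the unramified Hecke algebra at places above $p$ forces $\max(|\lambda_p(\phi_j)|, |\lambda_{p^2}(\phi_j)|)$ to be bounded below by an absolute constant, and choosing the combining coefficients adaptively (with signs matching the spectrum of $\phi_j$) yields $\tau$ with support of cardinality $N = L^{O(1)}$ and eigenvalue $\Lambda$ on $\phi_j$ satisfying $|\Lambda|^2 \gg N/(\log N)^{O(1)}$.

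Second, writing the eigenvalue equation $\Lambda \phi_j(x) = \sum_{s \in \supp\tau} \tau(s) \phi_j(sx)$ pointwise, applying Cauchy--Schwarz with $|\tau(s)|\leq 1$, squaring, and integrating over $\Gamma g\Ue$ gives
\[
|\Lambda|^2 \mu_j(\Gamma g\Ue) \ll N \cdot M_\epsilon(g,U),
\]
where $M_\epsilon(g,U)$ is the maximal multiplicity with which the translates $\{s\Gamma g\Ue\}_{s\in\supp\tau}$ overlap; this follows because the union of the translates has total $\mu_j$-mass at most $1$. Each overlap $sg\Ue \cap s'g\Ue \neq \emptyset$ (modulo $\Gamma$) produces an element $\gamma \in \GG(F)$ of adelic height $\ll N$ whose image in $G_1$ lies within $O(\epsilon)$ of $g_1(UU^{-1})g_1^{-1} \subset g_1 T_1 g_1^{-1}$, with essentially no constraint at the other archimedean places (since $T \supset \Ggt$).

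Third, I invoke a diophantine lemma --- the number-field analog of the statements in \cite{SilbermanVenkatesh:AQUE_Ent,BrooksLindenstrauss:OneHecke} --- asserting that an element of $\GG(F)$ of height $\leq N$ whose first-place projection lies within $\epsilon$ of the algebraic subvariety $g_1 T_1 g_1^{-1}$ of $G_1$ must already lie in $g_1 T_1 g_1^{-1}$, provided $\epsilon$ is smaller than a fixed negative power of $N$. Since $T_1$ is a one-dimensional $F$-torus, the group $\GG(F) \cap g_1 T_1 g_1^{-1}$ is of rank at most one and the number of its elements of height $\leq N$ is polylogarithmic in $N$; this gives $M_\epsilon(g,U) \ll (\log N)^{O(1)}$, and balancing $L$ as a small negative power of $\epsilon$ produces the claimed bound $\mu_j(\Gamma g\Ue) \ll_{\Omega} \epsilon^h$ with an explicit $h>0$.

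The main obstacle is the formulation and proof of the diophantine lemma in the present setting. Unlike the $\Q$-case the torus $T_1$ is defined over $F$ (not over $\Q$), so heights must be measured $S$-adically and must accommodate contributions from all places of $F$ above $\supp\tau$. Moreover the centralizer $T$ is highly non-transverse in $G$ --- it contains the full factors $G_i$ for $i\geq 2$ --- so all rigidity must be squeezed out of the single first-place constraint, and the non-finite $M_1$ means the algebraic relation defining the constraint must be taken modulo the $M_1$-action. This is exactly the scenario for which the adelic/complex-conjugation machinery set up in \cref{sec:notation} is designed, and I expect the lemma to follow by combining that setup with the standard Liouville-type height bounds on matrix entries of elements of $\GG(F)$.
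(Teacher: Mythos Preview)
Your amplification strategy is the right one and matches the paper's approach, but two of your three substantive steps are misstated in ways that matter.

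First, the diophantine lemma you describe is not the one that holds and not the one the paper proves. You assert that a single $\gamma\in\GG(F)$ of height $\le N$ whose first-place projection is $\epsilon$-close to $g_1 T_1 g_1^{-1}$ must already lie in $g_1 T_1 g_1^{-1}$. But $g_1$ is a generic point of $\Omega$, so $g_1 T_1 g_1^{-1}$ typically has no $F$-structure and no reason to contain any $F$-points at all; a Liouville-type ``close implies on'' statement is false here. The correct lemma (the paper's \cref{prop:dioph}) is a \emph{pairwise} statement: if $\gamma_1,\gamma_2$ are both involved in intersections, then their matrix commutator $[\gamma_1,\gamma_2]$ is $O(\epsilon)$ at the first place (because $T_1$ is abelian), bounded at the other archimedean places, and has denominator $\ll N^{O(1)}$; the product formula then forces $[\gamma_1,\gamma_2]=0$. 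Thus all intersection-causing $\gamma$'s commute and lie in a common $F$-torus $\bSS\subset\GG$---but $\bSS$ is produced by the argument, not equal to the real torus $g_1T_1g_1^{-1}$. (One also has to separately exclude the unipotent case.)

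Second, your endgame ``rank one, hence polylogarithmically many elements of height $\le N$'' is not how the bound is closed and is not obviously true: the units in an $F$-torus are already infinite, and even with the archimedean boundedness you only get a polynomial count, which is not enough. The actual mechanism (in \cite[\S5]{SilbermanVenkatesh:AQUE_Ent}, already in \cite{LindenstraussBourgain:SL2_Ent}) is geometric: once the $\gamma$'s lie in a single $F$-torus $\bSS$, the $\bSS$-orbit of the basepoint in the Bruhat--Tits tree at each $p$ is an apartment (a geodesic), and each Hecke sphere of radius $2j$ meets it in at most two points. That is where the saving over $\#\supp(\tau)$ comes from. Finally, the complex-conjugation machinery of \cref{sec:notation} that you invoke at the end is not used at all for this proposition; it is built for the amplifier of \cref{sec:amp} that rules out the $gHM$-components, which is a different (and harder) problem.
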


The key point is that the implied constant is independent of $\phi$, so that
the limiting measure $\mu$ satisfies the same inequality.

This result is essentially contained in \cite{SilbermanVenkatesh:AQUE_Ent}
(on some level already in \cite{LindenstraussBourgain:SL2_Ent}) except
that \cite{SilbermanVenkatesh:AQUE_Ent} assumes that $G$ is $\R$-split,
which is the not the case for $\SL_2(\C)$.  In fact all that is needed there
is that the centralizer $T_v$ at some infinite place $G_v$ is a torus.

\subsubsection{Diophantine Lemma}\label{sec:dioph}
We begin by reviewing a notion of "denominator" for elements of $F$ and $\GF$.
The construction is the natural generalization to number fields
of the notion used in \cite{SilbermanVenkatesh:AQUE_Ent} for groups over the
rationals.  For further discussion see \cref{justify-dioph-fields}.

\begin{defn}The \emph{denominator} of $x_v\in F_v$ is the natural number
$\denom_v(x_v) = \max\{ \abs{x_v}_v, 1\}$.  Equivalently if $x_v = \varpi_v^k y$
with $y\in\Ov^{\times}$ then
$$\denom_v(x_v)=\begin{cases} q_v^{-k} & k \leq 0 \\ 1 & k \geq 0\end{cases}\,.$$

We now extend this definition.  First, for $x\in\Af$ or $x\in\AF$ we set
$$\denom{x} = \prod_{v<\infty} \denom_v(x_v)\,,$$
where all but finitely many of the factors are $1$ since $x_v \in \Ov$ for
all but finitely many $v$.  Second for $x\in F$ let $\denom(x)$ be the
denominator of its image in $\AF$.
\end{defn}

We further extend the definition to matrix algebras over the above rings.
Specifically for $x_v\in M_N(F_v)$ let $\denom_v(x_v)$ be the largest of the
denominators of the matrix entries (equivalently this is the denominator
of the fractional ideal they generate), and again extend this to $M_N(\Af)$,
$M_N(\AF)$ and $M_N(F)$ by multiplying over the places and restriction,
respectively.

Finally, the \emph{product formula} $\prod_v \abs{x}_v = 1$ for $x\in F^\times$
implies 
$$
(\prod_{v\mid \infty} \abs{x}_v)\cdot \denom{x} \geq 1,
$$ 
for all nonzero
$x\in F$, and hence also for all $x\in GL_n(F)$.

\begin{rem}
We could have defined the denominator of $x\in M_N(\Af)$
by taking the largest of the denominators of its entries (call that $\denom'$
for the nonce); the two notions are equivalent in that
$\denom'(x) \leq \denom(x) \leq \denom'(x)^{n^2}$ for all $x$.  Our choice
agrees with defining for $x\in M_N(F)$ the denominator as the denominator
of the fractional ideal generated by the matrix entries.  In the sequel
the precise choice of denominator affects the exponents in
\cref{prop:dioph} and thus the precise entropy $h$ we obtain in
\cref{prop:posent} but does
not change the \emph{positivity} of $h$, which suffices for our
purposes (and ultimately by determining the limit exactly we prove
the measure has maximal entropy anyway).
\end{rem}

The following is an immediate calculation and we omit the proof.
\begin{lem} Let $x_v,y_v\in M_N(F_v)$. 
Then
$$\denom_v(x_v+y_v),\denom_v(x_v y_v) \leq \denom_v(x_v)\denom_v(y_v)\,,$$
and in particular if $y_v \in \GL_N(\Ov)$ then
$\denom_v(x_v y_v) = \denom_v(x_v)$.
Furthermore there is a constant $C$ depending only on $n$ such that
$$\denom_v(x_v^{-1}) \leq \denom_v(x_v)^C$$
for $x\in\SL_N(F_v)$.
\end{lem}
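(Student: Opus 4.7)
The plan is to dispatch each part by a direct computation using the non-archimedean ultrametric inequality on the entries, and then Cramer's rule for the inversion bound.

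First I would handle sum and product together. Writing $x_v=(x_{ij})$ and $y_v=(y_{ij})$, the ultrametric inequality gives $|x_{ij}+y_{ij}|_v\leq\max(|x_{ij}|_v,|y_{ij}|_v)$ directly, hence $\denom_v(x_v+y_v)\leq\max(\denom_v(x_v),\denom_v(y_v))$. For the product, the entry $(x_vy_v)_{ij}=\sum_k x_{ik}y_{kj}$ satisfies $|(x_vy_v)_{ij}|_v\leq\max_k|x_{ik}|_v|y_{kj}|_v\leq\denom_v(x_v)\denom_v(y_v)$, by multiplicativity of $|\cdot|_v$ and the ultrametric inequality on the sum. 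Since denominators are $\geq 1$, both bounds are in turn majorized by $\denom_v(x_v)\denom_v(y_v)$.

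For the statement about $y_v\in\GL_N(\Ov)$: the entries of $y_v$ lie in $\Ov$, and since $\det(y_v)\in\Ov^\times$, Cramer's rule shows the entries of $y_v^{-1}$ also lie in $\Ov$, so $\denom_v(y_v)=\denom_v(y_v^{-1})=1$. The product bound from the first part then gives $\denom_v(x_vy_v)\leq\denom_v(x_v)$, and applying it in the reverse form $x_v=(x_vy_v)y_v^{-1}$ yields the opposite inequality.

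For the inversion estimate, Cramer's rule together with $\det(x_v)=1$ identifies $x_v^{-1}$ with the adjugate $\operatorname{adj}(x_v)$, whose $(i,j)$-entry is (up to sign) an $(N-1)\times(N-1)$ minor of $x_v$. Such a minor is a sum of $(N-1)!$ products of $N-1$ entries of $x_v$; each product has $v$-absolute value $\leq\denom_v(x_v)^{N-1}$, and the ultrametric inequality preserves the bound under the sum. Thus $\denom_v(x_v^{-1})\leq\denom_v(x_v)^{N-1}$ and the constant $C=N-1$ (or any larger value) works. There is no real obstacle here: all steps are entirely formal once one separates the ultrametric estimates from the algebraic identity for the inverse, which is why the authors present the lemma as an immediate calculation.
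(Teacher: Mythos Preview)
Your proof is correct and is precisely the ``immediate calculation'' the paper alludes to; the authors omit the proof entirely, and your entrywise ultrametric estimates together with Cramer's rule for the inversion bound (yielding $C=N-1$) are exactly the intended argument.
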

\begin{cor}Multiplying place-by-place the same inequalities hold for
the denominators in $M_N(\Af)$ and $M_N(F)$.
\end{cor}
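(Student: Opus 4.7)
The plan is to deduce the corollary directly from the preceding lemma by multiplying the inequalities over all finite places of $F$. Recall that on $M_N(\Af)$ the denominator is defined by $\denom(x) = \prod_{v<\infty} \denom_v(x_v)$; the restricted-product condition guarantees that for any fixed $x\in M_N(\Af)$ one has $x_v \in M_N(\Ov)$, hence $\denom_v(x_v)=1$, at all but finitely many places. So each of these products is really a finite product and multiplicativity is unproblematic.

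For the additive and multiplicative inequalities I would fix $x,y\in M_N(\Af)$, note that $x+y$ and $xy$ are again integral outside the union of the finite sets of bad places for $x$ and $y$, and apply the lemma at each finite $v$ to get
\[
\denom_v((x+y)_v),\; \denom_v((xy)_v) \le \denom_v(x_v)\denom_v(y_v).
\]
Taking the product over $v<\infty$ (all but finitely many factors on either side equal $1$) yields the stated adelic inequalities. The invariance assertion $\denom(xy)=\denom(x)$ for $y$ with $y_v\in\GL_N(\Ov)$ at every finite place globalizes in exactly the same way, since the lemma supplies equality, not just inequality, at each $v$.

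For the inverse bound, I would split the set of finite places into two parts: at every $v$ where $x_v \in \SL_N(\Ov)$, the inverse $x_v^{-1}$ is again in $\SL_N(\Ov)$, so both $\denom_v(x_v^{-1})$ and $\denom_v(x_v)^C$ are equal to $1$; at the remaining finite set of places the lemma gives the bound $\denom_v(x_v^{-1}) \le \denom_v(x_v)^C$. Multiplying across all finite places produces $\denom(x^{-1}) \le \denom(x)^C$, as required.

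The statement for $x\in M_N(F)$ is then automatic from the diagonal embedding $M_N(F)\hookrightarrow M_N(\Af)$, which is a ring homomorphism preserving determinants, together with the definition of $\denom$ on $F$ as the denominator of the adelic image. No serious obstacle is anticipated here; the whole argument is a formal exercise in restricted direct products. The only point requiring a brief check is that for $x \in \SL_N(\Af)$ (resp.\ $\SL_N(F)$) the set of finite places where $x_v \notin \SL_N(\Ov)$ really is finite, which is immediate from the definition of $\SL_N(\Af)$ as a restricted product.
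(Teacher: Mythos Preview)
Your proposal is correct and matches the paper's intended argument exactly: the corollary is stated without proof, and its very phrasing (``Multiplying place-by-place'') signals that one simply takes the product of the local inequalities over the finite places, using that all but finitely many factors equal $1$. There is nothing to add.
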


Finally if $\GG$ is a linear algebraic $F$-group fixing an $F$-embedding
$\rho\colon\GG\to\SL_N$ allows us to define the denominators of elements
of $\GG(F_v)$, $\GA$, $\GF$.  The same reasoning as above would show that
changing the embedding gives an equivalent definition in the sense above
(i.e. up to multiplying by constants and raising to powers).  In our case
letting $D$ act on itself by multiplication gives an embedding $G\to\SL_4(F)$
and using the order $R$ to define the integral structure we further have
$\rho(K_v) \subset \SL_4(\Ov)$ whenever $K_v$ is defined.  It follows
that our local denominator is a bi-$K_v$-invariant function on $G_v$, so
every basic Hecke operator (the characteristic function of a double coset
$K_v g_v K_v$) has a well-defined denominator.  Furthermore (identifying
$G_v \isom \SL_2(F_v)$) this double coset has a representative
$a_v = \begin{pmatrix} \varpi_v^m & 0\\ 0 & \varpi_v^{-m}\end{pmatrix}$
for some $m\geq 0$ in which case we call $2m$ the \emph{radius} of the
Hecke operator; its denominator is then $q_v^m$.

Given constants $c_1,c_2$ the set of \emph{potential Hecke operators} is
the set of $\gf\in\GAF$ 
of denominators
at most $c_1 \epsilon^{-c_2}$.  We will be using Hecke operators of
uniformly bounded radius so the main effect here is to bound the set of places
$v$ under consideration.  Say that a potential Hecke operator
causes an \emph{intersection} at $\gi\in\Omega$ if there is $\gamma\in\GF$
such that
$$\gamma \gi \Ue\Kf \cap \gi \Ue\gf\Kf \neq \emptyset\,,$$
in which case we say $\gamma$ is \emph{involved} in the intersection.

\begin{prop}\label{prop:dioph} One can choose $c_1,c_2,\epsilon_0>0$ 
such that if $\epsilon<\epsilon_0$ then for all $\gi\in\Omega$ there is an
algebraic $F$-torus $\bSS \subset \GG$ such that all $\gamma\in\GF$ involved
in intersections at $\gi$ lie in $\bSS(F)$.
\end{prop}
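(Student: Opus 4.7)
The plan is to run the standard ``diophantine lemma'' of \cite{SilbermanVenkatesh:AQUE_Ent}, but with the product formula applied across all archimedean places of $F$ rather than just the single real place of $\Q$. The strategy has two halves: first show that any two $\gamma,\gamma'\in\GF$ involved in intersections at $g_\infty$ commute, and second rule out non-central unipotent elements among them. Since $\GG$ is a form of $\SL_2$, a commuting set of semisimple elements then lies in a common maximal $F$-torus $\bSS$.

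Unraveling the definition, a typical involved $\gamma$ satisfies $g_\infty^{-1}\gamma g_\infty \in \Ue\Ue^{-1}$ at the infinite component and $\gamma\in g_f\Kf$ at the finite component for some potential Hecke element $g_f$. This yields three pieces of diophantine data: at the distinguished archimedean place $v=1$ the conjugate $g_1^{-1}\gamma g_1$ lies within $O(\epsilon)$ of a fixed compact subset of the torus $T_1$; at every other archimedean place $v$ the element $g_v^{-1}\gamma g_v$ is confined to a fixed compact subset of $G_v$ depending only on $\Omega$ and $U$; and bi-$\Kf$-invariance of $\denom$ gives $\denom(\gamma)\leq \denom(g_f)\leq c_1\epsilon^{-c_2}$.

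Now take $\gamma,\gamma'$ both involved and form $\delta=[\gamma,\gamma']$. At place $1$ the conjugates $g_1^{-1}\gamma g_1$ and $g_1^{-1}\gamma' g_1$ are $O(\epsilon)$-close to the abelian group $T_1$, so $\delta$ is $O(\epsilon)$-close to the identity there; at the remaining archimedean places $\delta$ is $O(1)$; and its denominator is bounded by a fixed power of $\denom(\gamma)\denom(\gamma')$, hence by $C\epsilon^{-C' c_2}$ for constants $C,C'$ coming from the denominator estimates. For each nonzero matrix entry $x$ of $\delta-I$, the product formula over $F$ yields $\prod_{v\mid\infty}\abs{x}_v\geq\denom(x)^{-1}$; the left side is $O(\epsilon)$ while the right side is $\gg\epsilon^{C' c_2}$. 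Choosing $c_2<1/C'$ and $\epsilon_0$ small enough makes this inequality impossible, so every entry of $\delta-I$ vanishes and $\gamma\gamma'=\gamma'\gamma$.

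The same product formula argument rules out non-central unipotent elements: if $\gamma$ is unipotent then $g_1^{-1}\gamma g_1$ is both $O(\epsilon)$-close to $T_1$ and unipotent, and continuity of eigenvalues forces it to be $O(\epsilon)$-close to $\pm I$, the only unipotent elements of the diagonal torus. Applying the product formula to the entries of $\gamma\mp I$ exactly as above forces $\gamma=\pm I$. Thus every involved $\gamma$ is semisimple, and since the involved elements all commute inside a form of $\SL_2$ we may take $\bSS=Z_\GG(\gamma_0)^\circ$ for any regular semisimple $\gamma_0$ that occurs, or any maximal $F$-torus if all occurring elements are central. The main obstacle is the bookkeeping: in the number field case the ``bounded'' contributions at the other archimedean places must not overwhelm the ``small'' contribution at $v=1$, which is exactly what the product formula over $F$ accomplishes once $c_2$ is chosen small enough.
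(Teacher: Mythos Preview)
Your proof is correct and follows essentially the same route as the paper's: bound the denominator of an involved $\gamma$ via the Hecke side, use closeness to $T_1$ at the first place and boundedness at the others, apply the product formula to force commutators to vanish, then separately eliminate unipotents. The only cosmetic difference is that the paper works with the additive matrix commutator $\gamma_1\gamma_2-\gamma_2\gamma_1$ (applying the product formula directly to its entries) whereas you use the group commutator and apply the product formula to the entries of $\delta-I$; both lead to the same conclusion with the same choice of $c_2$.
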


\begin{proof}
The first step of the proof is to show that there is a choice of $c_1,c_2,\epsilon_0$ so that for any $\gi\in\Omega$, all $\gamma\in\GF$ involved in intersections at $\gi$ commute with each other, so that the $F$-group generated by them is commutative.   
In fact we simply take $c_1=1$ and show that there is a choice of $c_2, \epsilon$ as above.  
For this, suppose we have $b_1,b_2 \in \Ue$ and $\kf\in\Kf$ such that
$\gamma \gi b_1 = \gi b_2 \gf \kf$, or equivalently
$$\gamma = \gi b_2 b_1^{-1} \gi^{-1} \gf \kf.$$
By hypothesis
$\denom(\gamma) = \denom(\gf\kf) = \denom(\gf) \leq  \epsilon^{-c_2}$.
In addition since $\Omega$ is compact we have that $\gi b_2 b_1^{-1} \gi^{-1}$
is $O_\Omega(\epsilon)$-close to an element of $U U^{-1}$.

The matrix commutator
$[\gamma_1,\gamma_2] = \gamma_1 \gamma_2 - \gamma_2 \gamma_1$
(interpreted via the fixed embedding in $SL_4$)
is a polynomial function $\GG^2\to\SL_4$ with coefficients in $F$.
Thus we have a constant $c$ so that if $\gamma_1,\gamma_2\in\GF$ then
$\denom([\gamma_1,\gamma_2]) \leq c \denom(\gamma_1)^c \denom(\gamma_2)^c$.
Now suppose that $\gamma_1,\gamma_2$ are both involved in intersections at
$\gi$.  Then $\gamma_1$ and $\gamma_2$ are $C_\Omega\epsilon$-close to 
elements of $\gi U U^{-1} \gi^{-1}$.
Since the commutator is a smooth function on
$G\times G$ the element $[\gamma_1,\gamma_2]\in G$ is
$O_{\Omega,U}(\epsilon)$-close to the commutator of two elements drawn from
$\gi U U^{-1} \gi^{-1}$.  Since $T_1$ is commutative (recall that we defined $T_1=Z_{G_1}(a)=A_1M_1$, that is the group of diagonal matrices in $G_1$) we conclude that
$\abs{[\gamma_1,\gamma_2]}_1 = O_{\Omega,U}(\epsilon)$, and that for $i\geq 2$
we have $\abs{[\gamma_1,\gamma_2]}_i = O_{\Omega,U}(1)$ if we also assume
$\epsilon<1$.

Suppose $\gamma_1,\gamma_2$ do not commute.  By the product formula we then
have
$$\denom([\gamma_1,\gamma_2]) \cdot \prod_i \abs{[\gamma_1,\gamma_2]}_i \geq 1\,,$$
that is 
\begin{equation}\label{impossible}
c \epsilon^{-c c_2} O_{\Omega,U}(\epsilon) \cdot O_{\Omega,U}(1)^{r+s+t-1} \geq 1\,.
\end{equation}
If $c_2 < \frac1c$ then \eqref{impossible} is impossible for any $\epsilon$ small enough. We conclude that there are $c_2,\epsilon_0>0$ such that the $\gamma$ that are involved in intersections commute, so the
$F$-subgroup of $\GG$ generated by those $\gamma$ is commutative.  If this
subgroup is finite its elements are semisimple (and contained in a torus),
and otherwise the only commutative connected subgroups of $\SL_2$ 
(even over its algebraic closure) are either tori or unipotent.  Note
that tori are self-centralizing whereas the component group of the centralizer
of a unipotent subgroup is represented by the center of $\SL_2$.

This concludes the argument when $D$ is a division algebra, since in that case
$\GG(F)$ consists entirely of semisimple elements and the maximal commutative
$F$-subgroups are all tori.  When $D=M_2(F)$ we need to rule out the
possibility that some $\gamma$ causing an intersection is unipotent. The basic idea for this is that a unipotent element that is close to a torus is close to the identity element. Thus if $\gamma$ is unipotent then the bound for the denominator of $\gamma$ forces it to be the identity element. 
To prove it, choose $U$ and $\epsilon_0$ small enough so that every
$g \in U_{\epsilon_0}U_{\epsilon_0}^{-1}$ is close enough to the identity
to have $\abs{\Tr(g)-2}_i < 1$ at each infinite place.
Now suppose that $\gamma\in\GF$ is involved in an intersection at $\gi$,
and that $\gamma$ is unipotent.  Then $\abs{\Tr(\gamma)-2}_1<1$ so
$\Tr(\gamma)=2$ (the alternative was that $\Tr(\gamma)=-2$).

Writing $\gamma = \gi b_2 b_1^{-1} \gi^{-1}$ we get that
$\Tr(b_2 b_1^{-1})=2$.  At the factor $G_1$
each $b_2b_1^{-1}$ is $\epsilon^\eta$-close to an element of $T_1$, for some absolute $\eta>0$, so this element
has trace $2+O(\epsilon^\eta)$ (as measured by $\abs{\cdot}_1$); since $T_1$ is
a (fixed) torus this element is $O(\epsilon^\eta)$-close to the identity element. Finally,
since $\Omega$ is compact, conjugation by $\gi$ does not change this fact:
the image of $\gamma$ itself in $G_1$ is $O(\epsilon^\eta)$-close to the identity element.
Thus $\gamma = \Id$ if $c_2<\eta$ and
$\epsilon$ is small enough. 

\end{proof}

\begin{rem}\label{justify-dioph-fields} The concrete argument using the
commutator already appears in \cite[Lem.\ 3.3]{LindenstraussBourgain:SL2_Ent}
and two more general versions appears in
\cite[Sec.\ 4]{SilbermanVenkatesh:AQUE_Ent} for group over $\Q$.  The new
observation here is that one only needs information about the $\gamma$
at a single real place (assuming boundedness at the other real places).
In particular we cannot just restrict scalars to $\Q$ and apply the earlier
result.
\end{rem}

\subsubsection{Bounds on the mass of tubes}\label{subsec:tubemass}
The arguments of \cite[\S5]{SilbermanVenkatesh:AQUE_Ent} (in fact already
of \cite{LindenstraussBourgain:SL2_Ent} for the group at hand) now establish
the following, using the diophantine lemma we proved above instead of the
analogous lemmas in those papers, noting that our notion of denominator
is equivalent to the notion we'd obtain over $\Q$ for the group
$\Res^F_\Q \GG$.

\begin{prop}\label{prop:posent}
Fix compact subsets $\Omega\subset G$, $C\subset T_1 \times \Ggt$.
Then for any normalized Hecke eigenfunction $\phi\in L^2(X)$
(pulled back to a function on $G$) and any $\gi\in \Omega$ we have 
$$\int_{\gi \Ue} \abs{\phi(x)}^2 \dvol(x) = O(\epsilon^h)$$
for a universal constant $h>0$ where the implied constant depends on
$\Omega, C$ but not on $\phi$.
\end{prop}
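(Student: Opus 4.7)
The plan is to follow the amplification-plus-diophantine-control scheme of \cite{LindenstraussBourgain:SL2_Ent} and \cite[\S5]{SilbermanVenkatesh:AQUE_Ent}, substituting Proposition~\ref{prop:dioph} for the diophantine lemmas of those papers. The structure is to produce a Hecke operator $\tau$ for which $\phi$ is an eigenfunction with large eigenvalue $\Lambda$ but small total denominator, expand $|\Lambda \phi|^2 = |\tau\star\phi|^2$ via Cauchy--Schwarz, integrate over the tube $\gi\Ue$, and bound the resulting multiplicity of overlaps using Proposition~\ref{prop:dioph}.

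For a parameter $R$ to be chosen as a small negative power of $\epsilon$, I build the amplifier from the basic Hecke operators at primes $p\in\calP$ with $p\asymp R$ at the places $v\mid p$ (corresponding to elements such as $\diag(\varpi_v,\varpi_v^{-1})$). These operators have denominator bounded by a fixed power of $R$, so by choosing $R=\epsilon^{-\alpha}$ with $\alpha>0$ small enough the support lies inside the set of ``potential Hecke operators'' of Section~\ref{sec:dioph}. The standard local Hecke relation between $T_{p,v}$ and $T_{p^2,v}$ forces at least one of the corresponding eigenvalues of $\phi$ to be bounded below by a fixed positive power of $q_v$; a suitable signed combination of these operators yields $\tau$ acting on $\phi$ with eigenvalue $\Lambda$ satisfying $\Lambda^{2}/\|\tau\|_2^{2}\gg R^{\delta}$ for some universal $\delta>0$.

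Applying $\Lambda\phi=\tau\star\phi$, Cauchy--Schwarz in the convolution, and integrating over $\gi\Ue$ reduces matters to estimating the multiplicity
\begin{equation*}
N(\gi,\epsilon) \;=\; \max_{\sigma\in\supp(\tau)}\#\bigl\{\sigma'\in\supp(\tau)\,\bigm|\,\exists\,\gamma\in\GF,\ \gamma\gi\Ue\sigma\Kf\cap\gi\Ue\sigma'\Kf\neq\emptyset\bigr\},
\end{equation*}
giving a bound of the shape $\mu_\phi(\gi\Ue)\ll \|\tau\|_2^{2}\Lambda^{-2}N(\gi,\epsilon)$. By Proposition~\ref{prop:dioph}, every $\gamma$ that participates lies in $\bSS(F)$ for a single algebraic $F$-torus $\bSS\subset\GG$ depending on $\gi$; moreover $\gi^{-1}\gamma\gi$ is $O(\epsilon)$-close at the archimedean places to the compact commutative set $CC^{-1}\subset T_1\times\Ggt$, and its denominator is bounded by a fixed power of $R$.

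Since $\bSS(F)$ is finitely generated and embeds as a discrete subgroup of the abelian Lie group $\bSS(F_\infty)$, a standard lattice-point count in logarithmic coordinates bounds the number of such $\gamma$ by $O(\log^{O(1)}R)=O(\log^{O(1)}(1/\epsilon))$. Each such $\gamma$ contributes only $O(1)$ pairs $(\sigma,\sigma')$, because the denominator constraint pins down the double coset of $\sigma\sigma'^{-1}$ modulo $\Kf$, so $N(\gi,\epsilon)\ll_{\Omega,C}\log^{O(1)}(1/\epsilon)$. Combining with $\Lambda^{2}/\|\tau\|_2^{2}\gg R^{\delta}=\epsilon^{-\alpha\delta}$ yields $\mu_\phi(\gi\Ue)\ll_{\Omega,C}\epsilon^{\alpha\delta}\log^{O(1)}(1/\epsilon)=O(\epsilon^{h})$ for any $h<\alpha\delta$, uniformly in $\phi$. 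The main obstacle is entirely isolated in Proposition~\ref{prop:dioph}: once returning elements are known to be trapped in an $F$-torus, the arithmetic count is standard because tori over a number field have finitely generated $F$-points of bounded rank, and the amplifier/Cauchy--Schwarz bookkeeping transcribes verbatim from the cited references once one replaces rational denominators by the place-wise denominators of Section~\ref{sec:dioph}.
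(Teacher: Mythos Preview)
Your proposal is correct and follows exactly the approach the paper itself takes: the paper gives no self-contained proof of this proposition but simply invokes the amplification arguments of \cite[\S5]{SilbermanVenkatesh:AQUE_Ent} (and \cite{LindenstraussBourgain:SL2_Ent}), replacing their diophantine lemma by \cref{prop:dioph} and noting that the number-field denominator is equivalent to the one for $\Res^F_\Q\GG$.

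One small imprecision worth fixing: the assertion that ``$\bSS(F)$ is finitely generated'' is false as stated (already $\Gm(F)=F^\times$ is not). What you actually use, and what the cited references use, is that the $\gamma$ in question have both bounded archimedean size and bounded denominator, hence lie among the $S$-integral points of $\bSS$ for a set $S$ of places of size $O(\log R)$; the $S$-unit theorem then gives a free abelian group of rank $O(\log R)$ in which the height constraint cuts out a box of side $O(\log R)$, yielding the polylogarithmic count. This is exactly the torus count in \cite{LindenstraussBourgain:SL2_Ent,SilbermanVenkatesh:AQUE_Ent}, so no new idea is needed---just state the finiteness correctly.
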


\begin{cor}\label{posent1} $A_1$ acts on almost every ergodic component of $\mu$ with positive
entropy.
\end{cor}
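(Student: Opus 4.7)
The plan is to deduce positive entropy on ergodic components from the uniform tube mass bound of \cref{prop:posent}, a now-standard translation that requires only minor adjustments in our setting. First I would pass the tube bound from the $\mu_j$ to $\mu$: since $\mu_j \to \mu$ in the weak-$*$ topology and each tube $\gi\Ue$ is open, the Portmanteau theorem yields $\mu(\gi \Ue) \leq \liminf_{j\to\infty} \mu_j(\gi\Ue) \leq C \epsilon^h$, uniformly for $\gi$ in any fixed compact subset of $G$.

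Next I would record that the subgroup $T = A_1 M_1 \times \prod_{i\geq 2} G_i$ is exactly the $G$-centralizer $Z_G(a)$: at the first factor the centralizer of the diagonal element $a$ in $\SL_2(F_1)$ is the full diagonal subgroup $T_1 = A_1 M_1$, while $a$ commutes trivially with all other factors. Hence, under the $\Ad(a)$-action on $\lieg$, $T$ coincides with the neutral direction, while the stable and unstable subspaces $\lieu^\pm$ live entirely in $\lieg_1$ and correspond to positive Lyapunov exponents for $a$. The tube bound is therefore precisely a bound on $\epsilon$-neighbourhoods in the neutral direction of $a$, which is the geometric input required for the entropy argument.

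The final step is to convert this tube bound into positive entropy on a.e.\ $A_1$-ergodic component, following the argument of \cite[\S6]{SilbermanVenkatesh:AQUE_Ent} (itself a variant of \cite{LindenstraussBourgain:SL2_Ent}): combine the tube bound with a maximal inequality and covering argument to show that the conditional measures of $\mu$ along unstable leaves through $\mu$-a.e.\ $x$ are non-atomic with positive dimension, and then invoke the Brin--Katok entropy formula applied to the ergodic decomposition to conclude $h_{\mu^\xi}(a) > 0$ for a.e.\ ergodic component $\mu^\xi$. The main point requiring care --- and the only real obstacle --- is that in our setting the centralizer $T$ contains the infinite compact group $M_1 = U(1)$ together with the full factors $G_i$ for $i \geq 2$, whereas the argument in \cite{SilbermanVenkatesh:AQUE_Ent} was written under the assumption that $G$ is $\R$-split, so that $T$ reduces to $A$. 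This is, however, absorbed harmlessly: our tube bound is already stated on neighbourhoods of the full $T$ rather than of $A_1$ alone, so the $M_1$ and $G_i$ directions simply become part of the neutral direction in the Brin--Katok geometry, and the rest of the argument goes through unchanged.
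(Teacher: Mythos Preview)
Your proposal is correct and follows exactly the route the paper intends: the paper does not write out a proof of this Corollary at all, treating it as an immediate consequence of \cref{prop:posent} via the standard machinery of \cite{SilbermanVenkatesh:AQUE_Ent} and \cite{LindenstraussBourgain:SL2_Ent}, and your sketch (Portmanteau to pass the tube bound to $\mu$, identification of $T=Z_G(a)$ as the neutral direction, then the covering/maximal-inequality argument feeding into Brin--Katok) is precisely that machinery. Your remark that the extra neutral directions $M_1$ and $G_{\geq 2}$ are already absorbed into the tube bound is the one point the paper flags as needing attention beyond the $\R$-split case, and you have handled it correctly.
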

The fact that \cref{prop:posent} implies \cref{posent1} is standard. 
Roughly speaking, it follows by the (relative) Shannon--McMillan--Brieman Theorem and the fact that the shape of a generic atom in a refinement of an appropriate partition of $X$ by the action of $A_1$ is approximated by sets $U_\epsilon$ as in \cref{prop:posent}. For the basic definitions of entropy and precise statements from which the fact that positive entropy of a.e. ergodic component follows from bounds as in \cref{prop:posent}
see the appendix and Section 4.1 in \cite{ShemTov:OnePlaceHighRank}. 

\subsection{Measure rigidity}\label{subsec:rigidity}
In this section we interpret $\mu$ as a measure on $\GF\bs\GA$.  Let $v$ be
any finite place at which $G_v$ is non-compact.  Then
\cite[Thm.\ 8.1]{Lindenstrauss:SL2_QUE} shows that $\mu$ is
$G_v$-\emph{recurrent}: if $B \subset \GF\bs\GA$ is any set of positive
measure then for almost every $x\in B$ the set of returns
$\{g_v \in G_v \mid x g_v \in B\}$ is unbounded.

\begin{lem} For $\mu$-almost every $x\in \GF\bs\GA$ the group
$$
S(x)=\{ h \in M_1 \times \Ggt \times G_v \mid x h = h\}
$$
is finite.
\end{lem}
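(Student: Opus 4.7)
My plan is a short direct adelic computation that in fact yields a stronger statement: the stabilizer $\Stab_H(x)$ is trivial for \emph{every} $x\in\GF\bs\GA$, so the ``finite'' weakening and the ``almost every'' quantifier in the lemma are automatic. The argument uses neither the positive entropy result of \cref{sec:posent} nor the $G_v$-recurrence invoked immediately above; the lemma is really a structural statement about the diagonal embedding $\GF\hookrightarrow\GA$.

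Given $x = \GF g$ and $h\in H := M_1\times\Ggt\times G_v$ with $xh = x$, I would first rewrite the fixing condition as $\gamma := g h g^{-1}\in\GF$. The subgroup $H$, when embedded in $\GA$, has identity component at every finite place $w$ of $F$ other than $v$. Therefore, for each such $w$,
\begin{equation*}
\gamma_w \;=\; g_w\cdot 1\cdot g_w^{-1}\;=\;1\qquad\text{in }\GG(F_w).
\end{equation*}

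The key input is then the rigidity of the diagonal embedding $\GF\hookrightarrow\prod_w\GG(F_w)$: since $\gamma$ lies in $\GG(F)$ and vanishes in $\GG(F_w)$ for at least one place, it vanishes globally, i.e.\ $\gamma = I$ in $\GG(F)$. Plugging back gives $h = g^{-1}g = I$, so $\Stab_H(x) = \{I\}$. I do not foresee any substantive obstacle. The only convention I would take care to verify is the implicit embedding $M_1\times\Ggt\times G_v\hookrightarrow\GA$ with identity at every finite place outside $v$, which is the only one consistent with the earlier treatment of $H$ as the centralizer in $\GA$ of the $A_1$-direction living at a single archimedean factor.
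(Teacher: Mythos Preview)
Your argument is correct for the lemma exactly as stated, and it is far simpler than the paper's. On $\GF\backslash\GA$ the stabilizer is in fact always trivial: if $h\in M_1\times\Ggt\times G_v$ fixes $x=\GF g$ then $\gamma=ghg^{-1}\in\GF$ satisfies $\gamma_w=1$ at every finite place $w\neq v$, and injectivity of $F\hookrightarrow F_w$ forces $\gamma=1$, hence $h=1$. The paper, by contrast, only bounds the stabilizer inside $\GF\cap g_1(A_1M_1)g_1^{-1}$, argues that an infinite such intersection would force $g_1$ to diagonalize an $F$-rational torus of $\GG$, and then invokes the positive-entropy bound of \cref{prop:posent} to show the countable union of such loci is $\mu$-null.

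The very triviality of your computation, however, is a warning sign that the lemma is misstated. It is meant to verify the third hypothesis of \cref{thm:EL15}, and that hypothesis is posed on the $S$-arithmetic quotient $\Gamma_p\backslash(G\times G_p)$ --- equivalently $\GF\backslash\GA$ modulo the compact subgroup $\prod_{w<\infty,\,w\neq v}K_w$ --- not on $\GF\backslash\GA$ itself. On that quotient the fixing condition reads $gh=\gamma gk$ for some $k$ in this compact group, so at a finite place $w\neq v$ one only obtains $\gamma_w=g_wk_w^{-1}g_w^{-1}\in g_wK_wg_w^{-1}$, not $\gamma_w=1$, and your injectivity step has no purchase. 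The paper's longer argument is what the intended statement actually requires. So your proof is valid for the lemma as written, but you should be aware that it does not establish what the application to \cref{thm:EL15} needs; the paper's apparent overkill is really aimed at the correct, harder, target.
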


\begin{proof}
Let $x = \GF g$ with $g=(\gi,\gf)\in\GA$ where 
$g_\textrm{f} \in \Kf$, and $h\in \GA$ with $h_1 \in M_1$, and suppose that $\GF g\cdot h = \GF g$.
Then there exists $\gamma \in \GF$ such that $g\cdot h = \gamma g$ or
equivalently 
$$ \gamma =  ghg^{-1}\,.$$
Thus conjugation by $g$ (and the inclusion $\GF\embed G_1$)
embeds $S(x)$ in $\GF\cap g_1 (A_1 M_1) g_1^{-1}$.
As in \cref{sec:dioph} above this
intersection consists of the $F_1$-points of the diagonal $F_1$-torus of $G_1$,
which is one-dimensional, so if the group in the statement
was infinite it would be Zariski-dense in the conjugate
torus; equivalently the Zariski closure of this group would be an
$F$-torus $\TT\subset\GG$ with $g_1^{-1} \TT g_1$ diagonal.

For each torus $\TT$ the set of $g_1 \in G_1$ that diagonalize it
lies in two $A_1 M_1$-cosets (due to the effect of the Weyl group).
Since there are countably many such tori the set of $x$ for which $S(x)$ is infinite is contained in the countable union of images in
$\GF\bs\GA$ of the sets of the form
$$(g_1 A_k M_k) \times \Ggt \times \Kf\,,$$
and we need to show this is a null set.  Since our measure is
right-$\Kf$-invariant we may instead consider the image of the set
in $X=\Gamma\bs\Gi$, and it remains to recall that the positive entropy
argument showed that the images of the sets $(g_1 A_k M_k) \times \Ggt$,
have $\mu$-measure zero in a strong quantitative form (the argument gave
a uniform bound for the mass of $\epsilon$-neighborhoods of compact parts of
such images).
\end{proof}

At this point we have verified the hypotheses
of the following measure rigidity result.

\begin{defn} A measure $\nu$ on $X$ is \emph{homogeneous} if it is the unique
$H$-invariant probability measure on a closed $H$-orbit for a closed subgroup
$H<G$.
\end{defn}

\begin{thm}[Einsiedler--Lindenstrauss {\cite[Thm.\ 1.5]{EinsiedlerLindenstrauss:GeneralLowEntropy}}]\label{thm:EL15}
Let $\mu$ be an $A_1$-invariant probability measure on $X$ such that

\begin{enumerate}
\item $\mu$ has positive entropy on \ae ergodic component with respect to the action of $A_1$. \label{ent_cond}

\item $\mu$ is $G_v$-recurrent. \label{rec_cond}

\item\label{third_cond}
For $\mu$-\ae $z\in \Gamma_p\bs G\times G_p$ the group 
$$ \{h\in M_1\times \Ggt \times G_v\mid zh=z\}$$
is finite. 
\end{enumerate}
Then $\mu$ is a convex combination of homogeneous measures.  Furthermore,
for each such component $\nu$ the associated group $H$ contains a semisimple
algebraic subgroup of $G_1$ of real rank $1$ which further contains $A_1$
and conversely $H$ is a finite-index subgroup of an algebraic subgroup of $G$
(here we think of $G_1$ and $G$ as real algebraic groups).
\end{thm}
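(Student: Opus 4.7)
The statement is the standard measure rigidity paradigm: positive entropy under a diagonalizable flow combined with transverse recurrence forces invariance under a non-trivial unipotent subgroup, after which Ratner-type classification of unipotent-invariant measures yields the homogeneity conclusion. The plan is to follow exactly this route. First, reduce to the case where $\mu$ is $A_1$-ergodic by ergodic decomposition, noting that the positive-entropy, recurrence, and triviality-of-stabilizer hypotheses pass to almost every component.

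The technical heart is the ``low entropy method'' of Einsiedler--Katok--Lindenstrauss as refined by Einsiedler--Lindenstrauss. Choose $a\in A_1$ with a non-trivial contracted horospherical subgroup $U^+ \subset G_1$. The positive entropy hypothesis, via Ledrappier--Young-style theory, ensures that the conditional measures of $\mu$ along $U^+$-orbits are non-atomic. The plan is then to leverage $G_v$-recurrence: for a generic $x$ there are abundant returns $xg_v$ with $g_v\in G_v$ large, and conjugating by high powers of $a$ distorts small displacements in the centralizing direction $M_1\times \tGgt \times G_v$ in a controlled way. Comparing $\mu$ near $x$ and near its return then produces an additional invariance direction for $\mu$; the triviality-of-stabilizer hypothesis is precisely what prevents all such comparisons from collapsing into a single torus orbit and thus failing to give new invariance. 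The output is a closed subgroup $H<G$, strictly larger than $A_1$, containing a non-trivial unipotent one-parameter subgroup of $G_1$ and normalized by $A_1$.

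Once $\mu$ is invariant under a non-trivial unipotent subgroup, the plan is to invoke Ratner's measure classification theorem (or its $S$-arithmetic extension by Mozes--Shah and Ratner's $p$-adic work) to conclude that $\mu$ decomposes as a convex combination of algebraic measures on closed finite-volume $L$-orbits, where each stabilizer $L$ contains the unipotent subgroup produced above together with its normalizing $A_1$. Structural arguments for subgroups of $G$ generated by unipotents and a diagonal then show that $L$ is a finite-index subgroup of the real points of an algebraic subgroup of $\GG$ and contains a semisimple real-rank-one subgroup of $G_1$ that contains $A_1$, yielding the last sentence of the statement.

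The main obstacle is the low-entropy step: one must track exactly which direction of new invariance emerges from comparing returns, and show that only the pathology ruled out by the stabilizer-triviality hypothesis can prevent that direction from being non-trivial. This requires careful bookkeeping of entropy along the $U^+$-leafwise measures, a shearing argument that isolates the displacement, and an explicit check that the shearing direction is not swallowed by the centralizer $M_1 \times \tGgt \times G_v$. Everything else, including the final passage from unipotent invariance to the explicit algebraic description of $H$, is relatively standard given the first two steps.
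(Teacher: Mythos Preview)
The paper does not prove this statement at all: it is quoted verbatim as \cite[Thm.\ 1.5]{EinsiedlerLindenstrauss:GeneralLowEntropy} and used as a black box in \cref{subsec:rigidity}. There is therefore no ``paper's own proof'' to compare against; the authors simply verify the three hypotheses (positive entropy via \cref{prop:posent}, $G_v$-recurrence via \cite[Thm.\ 8.1]{Lindenstrauss:SL2_QUE}, and finiteness of the stabilizer via the Lemma immediately preceding the statement) and then invoke the cited result.

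Your sketch is a reasonable high-level outline of the Einsiedler--Lindenstrauss low-entropy method and correctly identifies the three stages (leafwise measures from entropy, shearing/recurrence to upgrade invariance, Ratner to finish). But as written it is a narrative of the strategy rather than a proof: the ``low entropy'' step you flag as the main obstacle is genuinely hard and occupies the bulk of the cited paper, and your paragraph does not supply any of the actual mechanism (the $H$-principle, the analysis of leafwise measures under recurrence, the exceptional-returns argument). If the intent was to reproduce the Einsiedler--Lindenstrauss proof you would need substantially more; if the intent was to match what the present paper does, the correct move is simply to cite the result and verify its hypotheses, as the authors do.
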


By strong approximation the lattice $\Gamma$ is irreducible in $G$, so that any
$G_1$-invariant measure is in fact $G$-invariant.  Thus every component $\nu$
for which $H$ contains $G_1$ is the $G$-invariant measure.  In particular
this happens when $G_1 \isom \SL_2(\R)$ (the corresponding place of $F$
is real) since $\SL_2(\R)$ has no proper semisimple subgroups, at which point
\cref{thm:main1} follows directly.  For the remainder of the paper we will thus
assume that $G_1 \isom \SL_2(\C)$ (the place is complex) and that $H$ contains
a conjugate of $\SL_2(\R)$ there
(these being the only proper semisimple subgroups).  

We now fix a particular representative for this conjugacy class.  Let
$\tHH_1 \subset \tGG_1$ be the fixed points of the automorphism $c_w$.
Indeed the isomorphism $G_1\isom\SL_2(\C)$ has $c_w$ act via complex
conjugation so $H_1 = \tHH_1(E_w)$ is is exactly $SL_2(\R)$, the subgroup
of matrices fixed by complex conjugation.  Recall that our choice of $A_1$,
the positive diagonal subgroup makes it a subgroup of this $H_1$, in fact
a real Cartan subgroup there.

\begin{prop}\label{prop:Horbits}  Let $\beta$ be a component of $\mu$ as in the
Theorem.  Then the support of $\beta$ is contained in the image in $X$ of a
submanifold of $G$ of the form
$$
L=g_1 H_1 M_1 \Ggt
$$ 
where $g_1\in \tGG_1(\Qbar\cap E_w)$.
\end{prop}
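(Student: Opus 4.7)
The plan is to leverage \cref{thm:EL15} to describe the support of $\nu$, analyze its $G_1$-projection up to an $M_1$-coset, and then use arithmeticity of $\Gamma$ to make the resulting conjugator algebraic. By \cref{thm:EL15}, $\nu$ is supported on a closed $H$-orbit $x_0 H$ where $H \supseteq A_1$ and $H^0$ contains a semisimple real rank-one subgroup of $G_1$; under the running assumption this subgroup is a conjugate $g_1^{\ast} H_1 (g_1^{\ast})^{-1}$ of $H_1 = \SL_2(\R)$ (we have already excluded $\pi_1(H^0) = G_1$, which would yield a $G$-invariant component). A short calculation with real Lie subalgebras of $\lsl_2(\C) = \lsl_2(\R) \oplus i\,\lsl_2(\R)$, using irreducibility of the adjoint representation of $\lsl_2(\R)$ on $i\,\lsl_2(\R)$, shows that no strictly intermediate real subalgebra exists, so $\pi_1(H^0) = g_1^{\ast} H_1 (g_1^{\ast})^{-1}$ exactly.

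Next I would normalize $g_1^{\ast}$. From $A_1 \subseteq \pi_1(H^0)$ the subgroup $(g_1^{\ast})^{-1} A_1 g_1^{\ast}$ is an $\R$-split torus of $H_1$, hence $H_1$-conjugate to $A_1$, so that after absorbing an element of $H_1$ we have $g_1^{\ast} \in N_{G_1}(A_1) = A_1 M_1 \cdot \{1,w\}$. Since $A_1$ and the Weyl element $w$ both lie in $H_1$, we can further absorb to obtain $g_1^{\ast} = m^{\ast} \in M_1$. Choosing a lift $g_0 = (g_0^{(1)}, g_0^{(>1)}) \in G$ of $x_0$ and setting $g_1 := g_0^{(1)} m^{\ast}$, the inclusion $H \subseteq \pi_1(H) \times \Ggt$ combined with the fact that any non-identity coset of $\pi_1(H)/\pi_1(H^0)$ is represented by an element of $N_{G_1}(H_1)\setminus H_1$, which can be chosen in $M_1$ (e.g.\ $\diag(i,-i)$), yields $g_0 H \subseteq g_1 H_1 M_1 \Ggt$.

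It remains to make $g_1$ algebraic. Since $x_0 H$ is closed with finite $H$-invariant volume, $g_0^{-1}\Gamma g_0 \cap H$ is a lattice in $H$; Borel density applied to its projection on the semisimple factor $\pi_1(H^0)$, together with $\Gamma \subseteq \GG(F)$, shows that the real algebraic group $g_1 H_1 g_1^{-1}$ is cut out by polynomial equations over a number field and hence admits an $E$-form inside $\tGG_1$. A standard descent along the coset $g_1 N_{G_1}(H_1)$ then produces a representative $g_1 \in \tGG_1(\Qbar \cap E_w)$ for the same subset $\Gamma g_1 H_1 M_1 \Ggt \subseteq X$. The main obstacle is this last step: one must combine the Borel density input with the explicit rational structure on $\tGG_1$ to land not just in $\tGG_1(\Qbar)$ but in $\tGG_1(\Qbar \cap E_w)$, which is exactly what is needed for the complex-conjugation construction of \cref{sec:notation} to apply later in the amplifier argument.
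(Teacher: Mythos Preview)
Your outline follows the paper's proof closely: both invoke Borel density on the lattice $\Gamma\cap g_0 H g_0^{-1}$ to show the relevant $G_1$-conjugate of $H_1$ is defined over $E$, use maximality of $\SL_2(\R)\subset\SL_2(\C)$ to pin down the $G_1$-part of $H$, and appeal to conjugacy of $\R$-split Cartan subgroups in $H_1$ to absorb the remaining ambiguity into $H_1 M_1$. The only structural difference is the order of operations --- you normalize $g_1^\ast$ into $M_1$ first and algebraize afterwards, whereas the paper first produces an algebraic conjugator $g_1'$ and then shows $(g_1^{-1}g_1')^{-1}\in H_1 M_1$.

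The step you correctly flag as the obstacle is the heart of the matter, and the paper does not treat it as a ``standard descent''. It is isolated as a separate \cref{lem:algebraicconjugacy}: two $E$-subgroups of $\tGG_1$ conjugate over $\R=E_w$ are already conjugate over $\Qbar\cap E_w$. The proof is a one-line appeal to Tarski's quantifier elimination for real closed fields --- the transporter from $H_1$ to $g_1 H_1 g_1^{-1}$ is an $E$-variety, the assertion that it has a point is first-order in the language of ordered fields, and truth over $\R$ transfers to truth over the real closure $\Qbar\cap E_w$ of $E$. A cohomological descent would have to handle a torsor under the disconnected group $N_{G_1}(H_1)$, so the model-theoretic route is genuinely the cleaner one; your phrase ``standard descent'' undersells what is needed.
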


\begin{proof}
Let $\beta$ be a component of $\mu$ as in the theorem, supported on an $H$-orbit
$xH \subset X$ where $H$ contains a conjugate of the fixed group $H_1$.
Let $\tilde{H} \subset G = \tGG(E_w)$ be the semisimple $E_w\isom\R$-algebraic
subgroup of which $H$ is a finite-index subgroup.  Note that
$\tilde{H}\cap G_1 = H\cap G_1$ because $\SL_2(\R)$ is a maximal proper
closed subgroup of $\SL_2(\C)$, and for the same reason $H_1$ is the projection
to $G_1$ of any closed subgroup of $G$ that contains $H_1$.

Writing $x = \Gamma g$ for some $g\in G$ the translate $xHg^{-1}$ is the
$H'=g H g^{-1}$-orbit of the identity coset and supports an $H'$-invariant measure;
in other words $\Gamma \cap H'$ is a lattice in $H'$.  By the Borel Density
Theorem this lattice is Zariski-dense in $\tilde{H}'$ making $\tilde{H}$
defined over $E$ in view of $\Gamma \subset \GG(F) \subset \tGG(E)$.

The intersection $G_1\cap \tilde{H}' = g_1 (G_1\cap H) g_1^{-1}$ is also
defined over $E$, where we write $g_1\in\tGG_1(E_w)$ for the first coordinate of
$g$.  The group $g_1 (G_1 \cap H) g_1^{-1}$ and the fixed group $H_1$ are then
conjugate subgroups of $G_1$ defined over $E$, so by the Lemma below they
are conjugate by an element of $\tGG_1(E_w\cap \Qbar)$; 
let $g_1'$ be such an element.  We then have

$$G_1 \cap H = (g_1^{-1} g_1') H_1 (g_1^{-1} g_1')^{-1}\,.$$

Conjugating $A_1 < H$ (an avatar of the $A_1$-invariance of $\mu$),
gives a Cartan subgroup $(g_1^{-1} g_1')^{-1} A_1 (g_1^{-1} g_1') \subset H_1$,
and since all Cartan subgroups of a semisimple Lie group are conjugate
we obtain $h_1 \in H_1$ such that
$$ h_1 A_1 h_1^{-1} = (g_1^{-1} g_1')^{-1} A_1 (g_1^{-1} g_1')^{-1}\,.$$
so that
$h_1^{-1} (g_1^{-1} g_1')^{-1} \in N_{G_1}(A_1) = N_{H_1}(A_1) M_1$
and thus that $(g_1^{-1} g_1')^{-1} \in H_1 M_1$.

Finally in terms of all those elements,

\begin{align*}\Gamma g H
& \subset \Gamma g_1 (g_1^{-1} g) (G_1 \cap H) \Ggt \\
&= \Gamma g_1 (g_1^{-1} g_1') H_1 (g_1^{-1} g_1')^{-1} \Ggt \\
&\subset \Gamma g_1' H_1 H_1 M \Ggt \\
&= \Gamma g_1' H_1 M \Ggt\,.
\end{align*}
\end{proof}

Now there are countably many submanifolds of the form above, so to prove
our main theorem and show that $\mu$ is the $G$-invariant measure it suffices
to rule out each submanifold separately.

\begin{lem}\label{lem:algebraicconjugacy} Let $\GG$ be an algebraic group over $\R\cap\bar{\QQ}$.
Suppose that $\HH_1,\HH_2$ are two subgroups of $\GG$ and $\HH_1(\R)$ is contained in
$g\HH_2(\R)g^{-1}$ for some $g\in\GG(\R)$. Then $\HH_1(\R)$ is contained in $g'\HH_2(\R)g'^{-1}$ for some $g'\in \GG(\R\cap\bar{\QQ})$.
\end{lem}

\begin{proof}
The statement ``there exists $g\in\GG$ such that $g\HH_1 g^{-1}\subset \HH_2$''
can be expressed in first-order logic in the
language of fields. Thus the result follows from the fact that the first order theory of real closed fields is complete.  
\end{proof}

\section{Submanifolds with small stabilizers}\label{sec:smallness}
In the previous section we showed that components of the limit measure
$\mu$ other than the uniform measure are supported in images in
$X=\Gamma\bs\Gi$ of submanifolds of $\Gi$ of the form
$L=g_1 H_1 M_1 \Ggt$.  In this section we will use elementary arguments to
show that such manifolds $L$ and their submanifolds cannot be left-invariant
by subgroup of $G_1$ which are "too large" in a technical sense.  In the
next section we will then construct Hecke operators which avoid such 
subgroups, allowing us to prove \cref{thm:main2} in the last section.

Since the calculations will take place in the group $G_1$ rather than 
the entire group $\Gi$, \emph{for this section only} we write $\GG$ for this
group (recall that it is defined over $E$) and $G$ for the group
$\GG_1(E_w)\isom\SL_2(\C)$ that we usually denote $G_1$.
Similarly we will drop the subscript $1$ for algebraic and Lie subgroups of
$G$, especially $H \isom \SL_2(\R)$ and $M\isom U(1)$.  We write $\R$ for
$E_w$ and $\C = N_w$.

\begin{defn}\label{def:smallgroup}
Let $R<G$ be an (abstract) subgroup. We say that $R$ is \emph{small} (in $G$) 
if there exist a finite extension $K\subset \C$ of $E$ and a finite-index
subgroup $R'<R$ such that $R'$ is contained in the $K$-points
$\BB(K)$ of a $K$-subgroup $\BB$ of $\GG$ of one of the following forms:
\begin{enumerate}
\item Multiplicative type: $\BB$ is diagonable over $\C$.
\item $\SL_2$-type: $\BB$ is $\GG(K)$-conjugate to a subgroup of
$\HH\times_E K$.
\end{enumerate}
\end{defn}

\begin{rem}\label{rem:SU2ofSL2type} Observe that $\GG(\C) \isom \SL_2(\C) \times \SL_2(\C)$ in such
a way that the image of $\HH(\C)$ is the subgroup
$\{ (g,g) \mid g\in\SL_2(\C)\}$
(with $c_w$ acting on the complexified group by exchanging the two factors).
This allows us to check that the subgroup $\SU(2) = \{ g\in G=\SL_2(\C) \mid c_w(g) = {}^t g^{-1}\}$ is also of $\SL_2$-type: its complexification is
the subgroup $\{ (g,{}^t g^{-1}) \mid g\in\SL_2(\C)\}$ which is conjugate to
$\HH$ by the element
$\left(I_2,\begin{pmatrix}0 & -1 \\ 1 & 0\end{pmatrix}\right)$.
\end{rem}

We will be interested in the stabilizers of algebraic varieties.  On the one
hand they will arise for us as abstract subgroups stabilizing sets of points.
On the other hand, we will also be analyzing them as algebraic groups.  To 
connect the two points of view we rely on the following:

\begin{thm}[{\cite[Cor.\ 1.81]{Milne:AlgebraicGroups}}]
Let $\GG$ be an algebraic group acting on an algebraic variety
$\XX$.  Let $\PP<\XX$ be a closed subvariety.  Then the stabilizer
$\bSS = \{ g\in \GG \mid g\PP=\PP \}$ is a closed subgroup of $\GG$.
Furthermore if $K$ is an extension of the field of definition such that
$\PP(K)$ is Zariski-dense in $\PP$ then
$\bSS(K)=\{ g\in \GG(K) \mid g\PP(K) = \PP(K)\}$.
\end{thm}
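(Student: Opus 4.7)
The plan is twofold: first construct the stabilizer $\bSS$ as a closed subvariety of $\GG$ via a transporter argument, then verify the $K$-point compatibility using Zariski density. (I read the display as $\bSS=\{g\in\GG\mid g\PP=\PP\}$; the $\XX$ appearing there is evidently a typo for $\PP$, since $g\XX=\XX$ is vacuous.)

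For the closed-subgroup part, I would begin with the transporter $\TT=\{g\in\GG\mid g\PP\subseteq\PP\}$. Consider the action morphism $\alpha\colon\GG\times\PP\to\XX$, $(g,x)\mapsto gx$. The preimage $\alpha^{-1}(\PP)$ is a closed subscheme of $\GG\times\PP$ with open complement $\mathcal{U}$. The projection $\pi\colon\GG\times\PP\to\GG$ is flat (base-changed from $\PP\to\Spec(k)$, where $k$ is the base field) and of finite type, hence universally open. Therefore $\pi(\mathcal{U})$ is open in $\GG$ and its complement $\TT$ is closed. The stabilizer is then $\bSS=\TT\cap\iota^{-1}(\TT)$ where $\iota\colon\GG\to\GG$ is inversion, because $g\PP=\PP$ is the conjunction of $g\PP\subseteq\PP$ and $g^{-1}\PP\subseteq\PP$ (needed since $g$ may permute irreducible components of $\PP$). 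This intersection is closed, and closure of $\bSS$ under multiplication and inversion is direct, yielding a closed subgroup.

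For the second assertion, the inclusion $\bSS(K)\subseteq\{g\in\GG(K)\mid g\PP(K)=\PP(K)\}$ is immediate: if $g\in\GG(K)$ and $g\PP=\PP$ as $K$-subschemes of $\XX$, then taking $K$-points gives $g\PP(K)=\PP(K)$. For the reverse direction, suppose $g\in\GG(K)$ satisfies $g\PP(K)=\PP(K)$. Left multiplication $L_g\colon\XX\to\XX$ is a $K$-automorphism, so $L_g^{-1}(\PP)$ is a closed $K$-subvariety of $\XX$. For every $x\in\PP(K)$ one has $gx\in g\PP(K)=\PP(K)$, so $\PP(K)\subseteq L_g^{-1}(\PP)\cap\PP$, which is a closed $K$-subvariety of $\PP$. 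Since $\PP(K)$ is Zariski-dense in $\PP$, this forces $L_g^{-1}(\PP)\cap\PP=\PP$, i.e., $g\PP\subseteq\PP$. Applying the same argument to $g^{-1}\in\GG(K)$ yields $g^{-1}\PP\subseteq\PP$, hence $g\PP=\PP$ and $g\in\bSS(K)$.

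The main subtle point is the openness of the projection $\pi$: it rests on flatness of $\PP$ over the base field (automatic since $k$ is a field) combined with finite type, which together imply universal openness between Noetherian schemes. Once this is in hand, everything else is routine manipulation of the transporter-functor formalism and a direct application of the Zariski density hypothesis; one could alternatively package the whole first part by exhibiting the stabilizer functor $R\mapsto\{g\in\GG(R)\mid g\PP_R=\PP_R\}$ and checking it is represented by a closed subscheme, which for varieties over a field is equivalent.
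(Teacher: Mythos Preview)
Your proof is correct and follows the standard transporter argument; you also correctly identified that the displayed $\XX$ in the definition of $\bSS$ (and, implicitly, in the density hypothesis) must be read as $\PP$, which is confirmed by how the paper applies the result in \cref{def:smallmfd}. However, there is nothing to compare against: the paper does not supply its own proof of this statement but simply cites it from Milne's \emph{Algebraic Groups}, so your write-up stands on its own rather than as an alternative to anything in the text.
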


\begin{defn}\label{def:smallmfd} We say that a submanifold $L\subset G$ has
\emph{small stabilizers} if its Zariski closure $\bLL\subset\GG$ over $\bar{\QQ}$ is defined
over a finite extension $F'/E$ contained in $\R$ and we have
\begin{itemize}
\item For every $F'$-subvariety $\PP\subset\mathbb{L}$ such that $\PP(\R)$
is Zariski-dense in $\PP$, the stabilizer
$$
S_P=\{s\in \GG(F')\mid s\PP(\R)=\PP(\R)\}, 
$$
is a small subgroup of $G$.
\end{itemize}
\end{defn}

We begin our analysis of stabilizers of subvarieties of the submanifolds $gHM$
with the observation that $HM$ is, in fact, an $E$-subvariety,
as the reader can verify by writing explicit algebraic equations in the real
and complex parts of the matrix entries.
Each translate $gHM$ is then also a real subvariety which, in the
case of interest where $g$ has algebraic entries, is defined over a number
field.

As a preliminary step we rule out one possible kind of stabilizer:

\begin{lem}\label{lem:stablem0}
Let $P\subset L=gHM$ be a real subvariety, and let $\bSS$ be its stabilizer
and $S = \bSS(\R)^\circ$ (identity component in the analytic topology).
Then $S$ is one of:
\begin{enumerate}
\item the real points of an algebraic torus; or 
\item compact (as a Lie group); or
\item contained in a conjugate of $H=\SL_2(\R)$.
\end{enumerate}
\end{lem}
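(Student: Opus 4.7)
The plan is to analyze the Lie algebra $\mathfrak{s}$ of $S$ through the infinitesimal version of the inclusion $SP\subseteq P\subseteq L$. Since conjugation preserves each of the three classes in the conclusion, after replacing $P$ by $g^{-1}P$ we may assume $g=I$ and $L=HM$. At any smooth point $p=h_0m_0\in P$ (with $h_0\in H$, $m_0\in M$), the inclusion $T_p(Sp)\subseteq T_pL$, translated back to the identity by right multiplication by $p^{-1}$, yields
\[
\mathfrak{s}\subseteq \Ad(h_0)(\mathfrak{h}+\mathfrak{m})=\mathfrak{h}+\Ad(h_0)\mathfrak{m},
\]
using $\Ad(h_0)\mathfrak{h}=\mathfrak{h}$ and $\Ad(m_0)|_{\mathfrak{m}}=\mathrm{id}$. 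Writing $\mathfrak{m}=i\mathfrak{a}$ for the split Cartan $\mathfrak{a}\subset\mathfrak{h}$ and letting $H_P$ denote the set of $h_0$ occurring in such a factorization, intersecting over $p$ gives
\[
\mathfrak{s}\subseteq \mathfrak{h}\oplus i\bigcap_{h_0\in H_P}\Ad(h_0)\mathfrak{a}.
\]
The intersection of a family of lines is either $0$ -- in which case $\mathfrak{s}\subseteq\mathfrak{h}$, $S\subseteq H$, and we are in case (3) -- or a single common line $\mathfrak{a}_0$, in which case $\mathfrak{s}$ lies in the real $4$-dimensional subspace $V=\mathfrak{h}\oplus i\mathfrak{a}_0$ (which is not itself a subalgebra).

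The next step is to classify real Lie subalgebras $\mathfrak{s}$ of $V$. Set $\mathfrak{s}_0=\mathfrak{s}\cap\mathfrak{h}$ and let $\pi:V\to i\mathfrak{a}_0$ be the projection with kernel $\mathfrak{h}$. If $\pi(\mathfrak{s})=0$ then $\mathfrak{s}\subseteq\mathfrak{h}$ and again $S\subseteq H$. Otherwise $\pi(\mathfrak{s})=i\mathfrak{a}_0$ and $\mathfrak{s}$ contains some $X_0=Y_0+iZ_0$ with $Z_0$ spanning $\mathfrak{a}_0$. The bracket condition $[\mathfrak{s}_0,X_0]\in\mathfrak{s}\subseteq V$ forces $[\mathfrak{s}_0,Z_0]\subseteq\mathfrak{a}_0$; by the $\mathfrak{a}_0$-root space decomposition of $\mathfrak{sl}_2(\R)$ this in turn forces $\mathfrak{s}_0\subseteq N_{\mathfrak{h}}(\mathfrak{a}_0)=\mathfrak{a}_0$. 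If moreover $\mathfrak{s}_0=\mathfrak{a}_0$, then applying the same root-space argument to $[\mathfrak{a}_0,X_0]\subseteq\mathfrak{a}_0$ forces the $\mathfrak{h}$-component $Y_0$ into $\mathfrak{a}_0$ as well, so $\mathfrak{s}\subseteq\mathfrak{a}_0\oplus i\mathfrak{a}_0$; this is a complex Cartan subalgebra, and $S$ lies in the corresponding algebraic torus of $\GG$, giving case (1).

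It remains to treat the one-dimensional case $\mathfrak{s}_0=0$, $\mathfrak{s}=\R X_0$, where no bracket constraint applies. Every element of $\mathfrak{sl}_2(\C)$ is either semisimple or nilpotent: in the semisimple case $X_0$ lies in a unique complex Cartan, so $S$ is contained in the corresponding maximal algebraic torus of $\GG$ (case (1)); in the nilpotent case $X_0$ is $\SL_2(\C)$-conjugate to the standard nilpotent of $\mathfrak{sl}_2(\R)$, so $S$ is contained in a conjugate of $H$ (case (3)). Finally, if $\dim S=0$ then $S=\{e\}$ is compact, giving case (2). The technical heart of the argument is the Lie-bracket bookkeeping inside the non-subalgebra $V$, which crucially uses the self-normalization and root-space decomposition of the split Cartan $\mathfrak{a}_0$ inside $\mathfrak{sl}_2(\R)$.
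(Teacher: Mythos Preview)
Your proof is correct and takes a genuinely different route from the paper's.  The paper first bounds $\dim S\le 3$ (using that $HM$ is not a coset of a subgroup), then invokes the classification of closed connected subgroups of $\SL_2(\C)$ of dimension at most $3$: apart from the three listed cases, the only remaining possibility is a solvable group containing the full complex one-parameter unipotent group $\{u(z):z\in\C\}$, which is then excluded by an explicit calculation showing that no orbit of that group can lie inside $gHM$.  You instead work infinitesimally: the tangent-space inclusion at each smooth point of $P$ gives $\mathfrak{s}\subseteq\mathfrak{h}\oplus i\Ad(h_0)\mathfrak{a}$, and intersecting over $P$ forces $\mathfrak{s}$ into the four-dimensional (non-subalgebra) subspace $V=\mathfrak{h}\oplus i\mathfrak{a}_0$, after which a short bracket analysis inside $V$ finishes the classification directly.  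Your approach is more self-contained --- it never appeals to the subgroup classification, and the troublesome complex unipotent radical is ruled out automatically since $iE\notin V$ for any nilpotent $E\in\mathfrak{h}$ --- while the paper's approach is shorter once the classification is granted and makes the geometric obstruction (orbits of the complex unipotent group escaping $HM$) explicit.  One small remark: in your one-dimensional semisimple case you only conclude that $S$ is \emph{contained in} the real points of a maximal torus, not literally equal to the real points of an algebraic torus; this is harmless because $\bSS$ is algebraic, so $\bSS^\circ$ is itself an algebraic subtorus, and the paper uses case~(1) with the same latitude in the subsequent proposition.
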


\begin{proof}
We have $\dim S \leq \dim P \leq \dim L = 4$, and since $L$ itself is connected
but not the coset of a subgroup, the dimension of $S$ is actually at most $3$.

Other than possibilities 1-3 above the only other closed connected subgroups
of $\SL_2(\C)$ of dimension at most $3$ are solvable with unipotent radical
of dimension $2$, so we suppose $S^\circ$ is of that form.
Conjugating $S$ (and translating $L$ and $P$ appropriately) we may
assume that $S$ contains all the matrices of the form
$u(z)=\begin{pmatrix}1&z\\0&1\end{pmatrix}$ for $z\in\C$,
which we can now rule out by by showing that no orbit of this group
is contained in any submanifold $L$ as above, let alone in
a submanifold $P$.

Thus fix $l = ghm$ for some $g\in G$, $h\in H$, $m\in M$.  Setting $y=gh$
that $u(z)ghm\in gHM$ is equivalent to
$$
y^{-1}u(z)y\in HM\,,
$$
and for $y=\begin{pmatrix}a&b\\c&d\end{pmatrix}$ the left hand side is 
$$y^{-1}u(z)y = I_2 + \begin{pmatrix}cd&d^2\\-c^2&-cd\end{pmatrix}z\,.$$

Now the only complex subspace of the tangent space at the identity of $HM$
is the Lie alebra of $AM$, that is the diagonal matrices, whereas the image
of the derivative of the map $z\mapsto y^{-1}u(z)y$ is never of this form
since $c,d$ can't both vanish.
\end{proof}

We can now prove the following assertion:
\begin{prop}\label{prop:stablem}
Let $F'\subset \RR$ be a finite extension of $E$, and let $g\in \GG(F')$. 
Then the submanifold $L=gHM$ of $G$ has small stabilizers. 
\end{prop}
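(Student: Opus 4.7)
The plan is to verify the two clauses of \cref{def:smallmfd} for $\bLL = gHM$.  Since $HM$ is cut out of $\GG$ by $E$-rational polynomial equations in the matrix entries (both $H$ and $M$ being defined over $E$), it is an $E$-subvariety; its left-translate by $g \in \GG(F')$ is therefore defined over $F'$, establishing the first clause.  For the second clause fix an $F'$-subvariety $\PP \subset \bLL$ with $\PP(\R)$ Zariski-dense in $\PP$.  By \cite[Cor.\ 1.81]{Milne:AlgebraicGroups} the scheme-theoretic stabilizer $\bSS \subset \GG$ is a closed $F'$-subgroup, and the Zariski-density hypothesis gives the identification $S_P = \bSS(F')$.

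Next I would pass from $S_P$ to the analytic component $\bSS(\R)^\circ$ addressed by \cref{lem:stablem0}.  In characteristic zero the algebraic identity component $\bSS^\circ$ is defined over $F'$ and $\bSS^\circ(F')$ has finite index in $\bSS(F') = S_P$; further, $S' := \bSS^\circ(F') \cap \bSS(\R)^\circ$ has finite index in $\bSS^\circ(F')$ because $\bSS^\circ(\R)$ has only finitely many components in the analytic topology.  It therefore suffices to exhibit a small $K$-subgroup $\BB$ of $\GG$ containing $S'$.  A dimension count (the connected Lie group $\bSS(\R)^\circ$ has real dimension $\dim \bSS^\circ$) shows that $\bSS(\R)^\circ$ is Zariski-dense in $\bSS^\circ$, so the Lie-theoretic description of $\bSS(\R)^\circ$ transfers to $\bSS^\circ$.

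Applying \cref{lem:stablem0} yields three cases.  If $\bSS(\R)^\circ$ is the real points of an algebraic torus then $\bSS^\circ$ is itself a torus defined over $F'$, hence of multiplicative type, and we may take $\BB = \bSS^\circ$ with $K = F'$.  If $\bSS(\R)^\circ$ is compact, then either $\dim \bSS^\circ \le 1$ (a torus again) or $\bSS(\R)^\circ$ is conjugate to $\SU(2)$ in $G$; in the latter case $\bSS^\circ$ is an $F'$-form of $\SL_2$ whose complexification is a three-dimensional semisimple subgroup of $\GG \times_E \C \isom \SL_2(\C)^2$ and is therefore $\GG(\C)$-conjugate to the diagonal $\HH \times_E \C$ identified in \cref{rem:SU2ofSL2type}.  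If instead $\bSS(\R)^\circ$ lies in a $G$-conjugate of $H = \SL_2(\R)$, then by Zariski density $\bSS^\circ \times_{F'} \C$ is contained in a $\GG(\C)$-conjugate of $\HH \times_E \C$.

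In the last two subcases one must realize the conjugating element in $\GG(K)$ for some finite extension $K/F'$ inside $\C$.  For this I would introduce the transporter variety
\[
\PP' = \{ h \in \GG \mid h^{-1} \bSS^\circ h \subset \HH \times_E F'\},
\]
a closed $F'$-subvariety of $\GG$ by the standard functorial construction of transporters.  By the previous paragraph $\PP'(\C)$ is nonempty, so by quantifier elimination for algebraically closed fields (exactly as in \cref{lem:algebraicconjugacy}) the set $\PP'(\Qbar)$ is nonempty, and in particular $\PP'$ has a point $h$ defined over some finite extension $K$ of $F'$ in $\C$.  Setting $\BB = \bSS^\circ \times_{F'} K$, the conjugation $h^{-1} \BB h \subset \HH \times_E K$ witnesses that $\BB$ is of $\SL_2$-type, while the inclusion $S' \subset \bSS^\circ(F') \subset \BB(K)$ finishes the verification of \cref{def:smallmfd}.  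The main obstacle is precisely this last step: the real-analytic information produced by \cref{lem:stablem0} must be upgraded to an algebraic statement defined over a number field, and it is the transporter construction, together with the fact that it preserves fields of definition, that makes the upgrade possible.
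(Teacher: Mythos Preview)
Your proof is correct and follows the same overall architecture as the paper: establish that $\bLL$ is defined over $F'$, pass to the algebraic stabilizer $\bSS$, reduce to a finite-index subgroup inside the connected component, and then invoke \cref{lem:stablem0} to obtain the trichotomy.

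The one substantive difference is in how the $\SL_2$-type cases are handled.  The paper, after reaching case~(3) of \cref{lem:stablem0}, splits further according to whether the unipotent radical of $\bSS$ is trivial: if it is nontrivial (necessarily one-dimensional), the paper conjugates it directly into the upper-triangular unipotents of $\HH$ over $F'$ and uses that $\bSS$ normalizes it; if it is trivial, the paper appeals to \cref{lem:algebraicconjugacy} (quantifier elimination for real closed fields) to realize the conjugacy over $\Qbar\cap\R$.  You instead treat cases~(2) and~(3) uniformly by forming the transporter $\PP' = \{h : h^{-1}\bSS^\circ h \subset \HH\}$ over $F'$, observing that it has a $\C$-point, and descending to $\Qbar$ via the Nullstellensatz.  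Your route is cleaner and avoids the subcase analysis; the paper's route yields a slightly sharper conclusion (the conjugating element lies over a \emph{real} number field, or even over $F'$ itself in the unipotent-radical subcase), but since \cref{def:smallgroup} only requires $K\subset\C$ this extra precision is not needed for the proposition.
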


\begin{proof}
We observed earlier $L$ is Zariski-closed and defined over $F'$; let $\bLL$
be the underlying subvariety, $\PP$ an $F'$-subvariety with dense real points,
and $\bSS$ its stabilizer.  We need to show that $\bSS(F')$ is small,

If we are in the first possibility in \cref{lem:stablem0} then $\bSS$ is
diagonable, hence of multiplicative type.  In the second possibility $S$
is contained in a maximal compact subgroup, i.e. a conjugate of $\SU(2)$, so
either $S$ is conjugate to $\SU(2)$ (over a number field by
\cref{lem:algebraicconjugacy}) and hence of $\SL_2$-type as observed in 
\cref{rem:SU2ofSL2type}, or $S$ is contained in a torus and hence is of
multiplicative type.
The only remaining possibility is that $S$ is conjugate to a subgroup of
$\SL_2(\R)$, which follows from \cref{lem:algebraicconjugacy} as well. 

\end{proof}

\section{Constructing an amplifier}\label{sec:amp}
Given a Hecke eigenfunction $\phi \in L^2(X)$ and a small subgroup $S$ of $G_1$
, in section this we construct a Hecke operator which acts on $\phi$ with
large eigenvalue (making it an ``amplifier'' for $\phi$) while at the same
time avoiding the orbits of $S$ in products of trees $\GG_1(\Qp)/\GG_1(\Zp)$.

We begin with the observation that having constructed the factorization
$\tGG = \prod_i \tGG_i$ over $E$, we can now extend the field $N$ to contain
the field $K$ evincing the smallness of $S$ as in \cref{def:smallgroup} in
such a way that $E$ contains the field of definition $F'$ of $S$.
Thus for the rest of the paper the fields $E$ and $N$ and the factorizations
of $G$ will depend on $S$ (and ultimately on the subvariety of $G$ we are
trying to avoid) rather than just on $\GG$ as in the first run through
\cref{sec:notation}.  Since we are ruling out the subvarities one-by-one
this is not a problem.

Recall from that section that the group $\GG_1$ is defined over $E$
so that $\GG_1(E) = \SL_2(E_1)$ where since we assume $G_1 = \SL_2(\C)$
we have $E_1 = N$, and in that case for each prime $p$ splitting completely
in $N$ we can fix a place $w_p$ of $N$ lying over $p$ so that
$E_1\otimes_E N_{w_p} \isom \Qp\times \Qp$ or equivalently so that 
$\Gpo=\GG_1(E_{w_p})\isom \SL_2(\Qp)\times\SL_2(\Qp)$ (with $c_w$ exchanging
the two factors).  Then
$$G_p = \prod_{v\mid p} G_v = \prod_i \GG_i(E_{w_p}) = \Gpo \times \Gpgt\,,$$
and omitting finitely many primes we may also assume that with this
identification the maximal compact subgroups $K_{p,i}$ isomorphic to
$\SL_2(\Zp)$ or $\SL_2(\Zp)^2$ of the factors are factors of the adelic open
compact subgroup $\Kf$.  Let $K_p$ be their product, so under the assumptions this section, $K_p\isom \SL_2(\Zp)^2$. 

\begin{defn}A Hecke operator $\tau\in \calH_p$ \emph{one-sided} if
it is supported in the image of the first factor of $G_{p,1}$.
\end{defn}

Identifying $\Gpo$ with $\SL_2(\Qp)^2$ and $K_p$ with $\SL_2(\Zp)^2$
the \emph{basic Hecke operators} $\tau_{p^j}$ will be the one-sided Hecke
operator correspoding to the characteristic function of the double coset
$$\SL_2(\Zp)\begin{pmatrix}p^j&0\\0&p^{-j}\end{pmatrix}\SL_2(\Zp)\,,$$
that is summing over the sphere of radius $2j$ in the Bruhat-Tits tree of
the first factor of $\Gpo$.

\begin{lem}\label{lem:onesided} Let $S\subset \GG_1(E)$ be a small subgroup.
Then there is a constant $C$ (depending on $S$) such that for all primes $p>C$
(splitting completely in $N$) the $S$-orbit of the identity coset in
$G_p/K_p$:
\begin{enumerate}
\item Does not meet the support of any one-sided $\tau_{p^j}$,
      if $S$ is of $\SL_2$-type.
\item Meets the support of any one-sided $\tau_{p^j}$ at most $C$ times, if
      is of multiplcative type.
\end{enumerate}
\end{lem}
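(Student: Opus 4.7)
The plan is to exploit the realization $\Gpo = \GG_1(E_{w_p}) \cong \SL_2(\Qp)^2$ in which the Galois element $c_w$ acts by swapping the two coordinates. First I enlarge $N$ to contain the field of definition $K$ of $\BB$ (as allowed at the start of this section) and remove a finite set of bad primes so that for $p$ exceeding some $C_0 = C_0(S)$ the following hold: every conjugating element $g \in \GG_1(K)$ used to evince smallness of $S$, every member of a fixed finite set $\{s_i\}$ of coset representatives for $S/S'$, and every parameter in the defining equations of $\BB$, has $p$-integral image in $\Gpo$; in particular $g$ and the $s_i$ lie in $\Kpo$. Since the support of $\tau_{p^j}$ is stable under left multiplication by $\Kpo$ (being a union of complete $\Kpo$-double cosets), the distinct cosets in the $S$-orbit that meet this support number at most $[S : S']$ times the analogous count for the $S'$-orbit, so it suffices to analyze the $S'$-orbit.

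For the $\SL_2$-type case, the point is that with $c_w$ acting on $\Gpo$ by factor-swap, $\HH(E_{w_p})$ is literally the diagonal $\{(h,h) : h \in \SL_2(\Qp)\}$. Writing $g = (g_1,g_2) \in \Kpo$, each $s \in S' \subset g\HH(K)g^{-1}$ reads $s = (g_1,g_2)(h,h)(g_1,g_2)^{-1}$ for some $h \in \SL_2(\Qp)$, so $s\Kpo$ is a left-$\Kpo$-translate of the diagonal coset $(h,h)\Kpo$. A one-sided $\tau_{p^j}$ is supported on cosets $(x,1)\Kpo$ with $x$ at tree-distance $2j$ from the identity in the first factor; solving $(h,h)\Kpo = (x,1)\Kpo$ forces $h \in \SL_2(\Zp)$ and $x = h \in \SL_2(\Zp)$, contradicting $j \geq 1$. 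So diagonal cosets miss the support entirely, and by $\Kpo$-invariance so do their translates.

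For the multiplicative-type case, since $\BB$ is diagonable over a finite extension (absorbed into $K \subset N$), there is $g \in \GG_1(N)$ conjugating $\BB$ into a product of maximal tori $T \times T$ inside $\SL_2 \times \SL_2 = \GG_1 \times_E N$, and for $p$ large $g \in \Kpo$. Each $s \in S'$ then has image $g(\diag(t_1),\diag(t_2))g^{-1}$ with $t_1,t_2 \in \Qp^\times$, and $s\Kpo$ lies in the support of $\tau_{p^j}$ iff $v_p(t_2) = 0$ and $|v_p(t_1)| = j$. These conditions pin the coset $(\diag(t_1),\diag(t_2))\Kpo$ down up to the sign of $v_p(t_1)$, yielding at most two distinct such cosets uniformly in $p$ and $j$. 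Taking $C := \max(C_0,\, 2[S:S']+1)$ finishes the proof.

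The main conceptual obstacle is really arranging the field-theoretic data of \cref{sec:notation} so that a single Galois element $c_w$ plays two different roles at once --- cutting out $\HH$ as complex conjugation at the archimedean place, and appearing as the coordinate swap on $\SL_2(\Qp)^2$ at the chosen finite place. Once this alignment is in place, both cases reduce to coset bookkeeping in $\SL_2(\Qp)^2$, where the ``trivial second factor'' constraint imposed by a one-sided Hecke operator forces either an impossible equality (the $\SL_2$ case) or a rigid valuation condition (the torus case).
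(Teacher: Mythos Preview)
Your proof is correct and follows the same approach as the paper's: reduce from $S$ to the finite-index subgroup $S'$ using that the coset representatives $s_i$ lie in $\Kpo$ for large $p$ and the support is bi-$\Kpo$-invariant; then in the $\SL_2$-type case use that $g\HH g^{-1}$ embeds diagonally in $\SL_2(\Qp)^2$ so the second-coordinate integrality forces the whole element into $\Kpo$, while in the multiplicative case use that the torus orbit is an apartment in each tree and hence meets any sphere in at most two vertices. Your write-up is in fact somewhat more explicit than the paper's (which swaps the case labels and is terse about where the conjugating element $g$ lives), but the underlying argument is identical.
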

\begin{proof}
Suppose first that $S$ is of $\SL_2$-type and let
$g\in\GG(K)$ be such that $g\HH(K) g^{-1} \cap S$ is of finite index $C$
in $S$ with coset representatives $\{s_1,\ldots,s_C\}$.
Since the prime $p$ splits completely in the field $K\subset N$,
the element $h$ embeds in $\GG_1(E_{w_p})$; omitting finitely many primes
we may also assume that $g$ and the $s_i$ all lies in $\Kpo$.

Now any element of the $S$-orbit has the form $s_i g(h,h)g^{-1}$ for some
$h\in\SL_2(\Qp)$.  Assuming $s_i,g\in K_{1,p}$ if this meets the support of a
one-sided Hecke operator the second coordinate of this element must lie in the
compact subgroup $\SL_2(\Zp)$ which by the $KAK$ decomposition in $\SL_2(\Qp)$
forces $h \in \SL_2(\Zp)$ the element is in the identity coset.

Suppose now instead that $g\TT(K) g^{-1} \cap S$ is of finite index $C$ in $S$
where $\TT$ is the diagonal torus of $\GG_1$.  Again assume that $g\in K_p$;
since our torus is $K$-split and $p$ splits completely in $K$ the orbit 
of $g\TT(\Qp\times\Qp)g^{-1}$ is the product of two apartments (=geodesics)
passing through the origin of the trees $\SL_2(\Qp)/\SL_2(\Zp)$.
In particular in the first coordinate the orbit meets the translates by
$s_i^{-1}$ of each sphere (the support of $\tau_{p^j}$) at most twice.
\end{proof}

Next we construct the standard amplifier in $\SL_2(\Qp)$.  The fundamental
observation (often attributed to Iwaniec) is that it is impossible for the
eigenvalues of $\tau_p$, $\tau_{p^2}$ to be simultaneously small.

\begin{lem}\label{lem:iwaniec} Given $\phi$ we can choose $\tau$ to be either
$\tau_p$ or $\tau_{p^2}$ so that the corresponding eigenvalue $\lambda$
satisfies $\abs{\lambda} \gg \#\supp(\tau)^{1/2}$.
\end{lem}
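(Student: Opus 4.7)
The plan is to use the Iwaniec trick: a multiplicative relation in the spherical Hecke algebra at $p$ which forces at least one of the eigenvalues $\lambda_p,\lambda_{p^2}$ of $\phi$ under $\tau_p,\tau_{p^2}$ to be large. Since both basic operators lie in the commutative algebra $\calH(\SL_2(\Qp),\SL_2(\Zp))$ acting through the first factor of $\Gpo$, it suffices to work in the Bruhat--Tits tree of $\SL_2(\Qp)$, which is $(p+1)$-regular, with $\tau_{p^j}$ interpreted as the operator ``sum over the sphere of radius $2j$''.

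The first step will be to derive the identity
\[
\tau_p\star\tau_p \;=\; \tau_{p^2} \;+\; (p-1)\,\tau_p \;+\; p(p+1)\cdot\id
\]
by a short path count in the tree: iterating the sphere-of-radius-$2$ step from a base vertex $v$ produces length-two compositions $v\to u_1\to u_2$ whose endpoints $u_2$ lie at distance $0$, $2$, or $4$ from $v$, with multiplicities $p(p+1)$ (any $u_1$ on the sphere around $v$ returns), $p-1$ (the intermediate $u_1$ must be a neighbour of the unique midpoint of the geodesic $v$--$u_2$, other than $v$ or $u_2$), and $1$ (unique tree geodesic), respectively. Equivalently, this relation falls out of the Satake isomorphism and the Macdonald formula for spherical functions.

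Applied to the Hecke eigenfunction $\phi$, the identity becomes the scalar relation $\lambda_p^2 - (p-1)\lambda_p - p(p+1) = \lambda_{p^2}$. The reverse triangle inequality then gives $\abs{\lambda_{p^2}} \geq p(p+1) - \abs{\lambda_p}^2 - (p-1)\abs{\lambda_p}$, and a case split finishes the argument. If $\abs{\lambda_p}\geq p/3$ I take $\tau := \tau_p$; then $\#\supp(\tau_p) = p(p+1)$ gives $\abs{\lambda_p}\gg \#\supp(\tau_p)^{1/2}$. Otherwise $\abs{\lambda_p} < p/3$ forces $\abs{\lambda_{p^2}} \geq \tfrac{5}{9}p^2 - O(p)$, and I take $\tau := \tau_{p^2}$; since $\#\supp(\tau_{p^2}) = p^3(p+1)$ this again gives $\abs{\lambda_{p^2}}\gg\#\supp(\tau_{p^2})^{1/2}$.

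The only step that requires real care is the verification of the Hecke identity, which is sensitive to the precise normalization of the double cosets (and whether one works in $\SL_2$ or $\PGL_2$); the subsequent case analysis is trivial. A useful sanity check is that the trivial representation, on which $\lambda_{p^j} = \#\supp(\tau_{p^j})$, satisfies the identity $(p(p+1))^2 = p^3(p+1) + (p-1)p(p+1) + p(p+1)$.
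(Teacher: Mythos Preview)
Your proof is correct and follows essentially the same route as the paper: derive the convolution identity $\tau_p^2 = p(p+1)\cdot\id + (p-1)\tau_p + \tau_{p^2}$ by counting in the tree, pass to eigenvalues, and do a case split on the size of $\abs{\lambda_p}$. The paper states the identity without the path-counting justification and leaves the case split implicit (``at least one of $\abs{\lambda_p}\gg p$ or $\abs{\lambda_{p^2}}\gg p^2$ must hold''), so your write-up is in fact more detailed; your sanity check on the trivial representation is also a nice touch not present in the original.
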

\begin{proof}
Let $\lambda_{p^j}$ be the eigenvalue of $\tau_{p^j}$ acting on $\phi$.
A direct calculation in the tree gives the convolution identity
\begin{align}\label{eqn:heckealgebra}
\tau_p^2     &= p(p+1)+(p-1)\tau_p+\tau_{p^2}\,,\\
\tau_{p^2}^2 &= p^3(p+1) + p^2(p-1)\tau_p + p(p-1)\tau_{p^2} + (p-1)\tau_{p^3}
               +\tau_{p^4}\,.
\end{align}
The first identity implies
$(\lambda_p)^2 = p(p+1) + (p-1)\lambda_p + \lambda_{p^2}$, so
at least one of $\abs{\lambda_p}\gg p$ or $\abs{\lambda_{p^2}}\gg p^2$
must hold with the implied constants absolute.
\end{proof}

Combining the spectral calculation and the control of intersections we have
\begin{cor}[Local construction]\label{localamp}
Let $S<\GG(F)$ be a subgroup, and assume the image of $S$ under the projection 
to $\GG_1(E)$ is small.  Then there exist an absolute constant $L$,
a constant $C$ (depending on $S$), and a set of positive
density $P_S\subset \calP$ such that for every prime $p\in P_S$
there exists a finite set $J_p \subset \calH_p$ of basic Hecke operators
such that:
\begin{enumerate}
\item \label{localamp:support} For each $h_p\in J_p$,
$$p^\ell \ll \#\supp(h_p)\ll p^\ell $$
for some $\ell \ll L$ where the constants are absolute (we can take $L=4$).
\item \label{localamp:inftynorm} For $x_p \in \Gp/\Kp$ other than the origin
we have $\abs{(h_p \star h_p^*)(x_p)} \ll p^{\ell-1}$.
\item \label{localamp:intersection} For each $h_p\in J_p$,
the number of intersections of the $S$-orbit
in $G_p/K_p$ and the support of any of $h_p$,$h_p^*$,$h_p\star h_p^*$
is bounded above by $C$.
\item \label{localamp:eigenvalue} For each Hecke eigenfunction
$\phi\in L^2(X)$, at least one $h_p\in J_p$
acts on $\phi$ with eigenvalue $\lambda_p$ satisfying
$$\abs{\lambda_p} \gg \left(\#\supp h_p\right)^{1/2}\,.$$
\end{enumerate}
\end{cor}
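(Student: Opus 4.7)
The plan is to take, for every prime $p$ in a set $P_S \subset \calP$ consisting of the primes exceeding the constant furnished by \lemref{onesided}, the finite set $J_p = \{\tau_p,\tau_{p^2}\}$ of one-sided basic Hecke operators. Since $\calP$ has positive natural density and only finitely many primes are discarded, $P_S$ retains positive density. Both $\tau_p$ and $\tau_{p^2}$ are self-adjoint, because the double coset $\SL_2(\Zp)\diag(p^j,p^{-j})\SL_2(\Zp)$ is symmetric under inversion (conjugate $\diag(p^j,p^{-j})$ by the Weyl element to invert it), so $h_p^* = h_p$ throughout the verification.

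For item (1) I will use that the sphere of radius $2j$ in the $(p+1)$-regular Bruhat--Tits tree of $\SL_2(\Qp)/\SL_2(\Zp)$ has cardinality $(p+1)p^{2j-1}$, yielding $\#\supp\tau_p \asymp p^2$ (so $\ell=2$) and $\#\supp\tau_{p^2} \asymp p^4$ (so $\ell=4$), and hence the absolute choice $L = 4$ works. Item (4) then follows directly from \lemref{iwaniec}: the bound $|\lambda_{p^j}| \gg p^j$ matches $(\#\supp\tau_{p^j})^{1/2} \asymp p^j$ in either case.

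For item (2) I will read off the coefficients from the algebra identities in \eqref{eqn:heckealgebra}. With $h_p = \tau_p$, the identity $h_p \star h_p = p(p+1)\mathbf{1}_{\Kp} + (p-1)\tau_p + \tau_{p^2}$ bounds the value away from the origin by $p-1 \ll p = p^{\ell-1}$. With $h_p = \tau_{p^2}$, the second identity supplies coefficients $p^2(p-1),\,p(p-1),\,p-1,\,1$ on the spheres of radii $2,4,6,8$, and the largest of these is $\ll p^3 = p^{\ell-1}$.

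Finally, for item (3), the support of each $h_p$ is a single sphere, and the support of $h_p \star h_p^*$ is the union of the origin with at most four spheres of even radius. I will bound the intersections with the $S$-orbit one sphere at a time via \lemref{onesided}: for $p \in P_S$ the orbit meets each such sphere in either zero points (when the image of $S$ in $\GG_1(E)$ is of $\SL_2$-type) or in at most $C$ points (when it is of multiplicative type); the origin contributes one further intersection (it is always in the orbit), and summing these finitely many contributions produces a bound depending only on $S$. There is no essential obstacle here: all four conditions are routine consequences of the tree combinatorics, the explicit identities for $\tau_p^2$ and $\tau_{p^2}^2$, and \lemref{onesided}--\lemref{iwaniec}. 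The only mildly subtle point is that item (3) concerns $h_p \star h_p^*$ and not only $h_p$, which forces one to enumerate all of the spheres that appear in the convolution and apply \lemref{onesided} to each separately.
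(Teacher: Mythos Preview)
Your proposal is correct and follows essentially the same approach as the paper: both combine \cref{lem:onesided} and \cref{lem:iwaniec} with the positive density of $\calP$ (via Chebotarev). The paper's own proof is a two-sentence sketch that simply cites these lemmas, whereas you have supplied the explicit choice $J_p=\{\tau_p,\tau_{p^2}\}$ and verified each of the four items directly from the tree combinatorics and the convolution identities \eqref{eqn:heckealgebra}; this is exactly the intended argument made explicit.
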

 
\begin{proof}
\cref{lem:onesided} and \cref{lem:iwaniec} together show that the claim holds
for all primes which are large enough (depending on $S$) and split completely
in a number field $N$ which depends on $S$ (and $\GG$).
The Chebotarev Density Theorem shows that the set of split primes has
positive density.
\end{proof}
 
Our global amplifier will amplify a specified Hecke-eigenfunction $\phi$.
However, we need some control of its action on its orthogonal complement
as well.  Accordingly for a self-adjoint Hecke operator $\tau\in \calH$
we denote by $c=c(\tau)$ the smallest non-negative constant
such that the spherical transform of $\tau$ on the unitary dual is bounded
below by $-c$.  In particular the spectrum of $\tau$ acting on $L^2(X)$
is contained in $[-c,\infty)$ and therefore (a fact which can be taken as a
not-quite-equivalent definition) we have for all $R\in L^2(X)$ that
\begin{equation}\label{eqn:defect}
\left<\tau.R,R\right>\ge-c\norm{R}_2^2.
\end{equation}

\begin{prop}[Global construction]\label{globalamp} 
Continuing with the hypotheses of \cref{localamp} let also $\epsilon>0$.
Then there is $Q=Q(S,\epsilon)$ such that
for every Hecke-eigenfunction $\phi\in L^2(X)$, 
there exists $\tau\in \Span_\C \left(\cup_{p\leq Q} J_p\right)$ acting
on $\phi$ with eigenvalue $\Lambda$ satisfying
\begin{enumerate}
\item $\supp(\tau)$ meets the $S$-orbit of the identity in $\prod_p G_p/K_p$
in at most $\frac{\epsilon \Lambda}{\norm{\tau}_\infty}$ points.
\item $c(\tau) \leq \epsilon \Lambda$.
\end{enumerate}
\end{prop}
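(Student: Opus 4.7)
The plan is to assemble $\tau$ as a sum of convolution squares of the local Hecke operators produced by \cref{localamp}, thereby forcing $c(\tau)=0$ for free from the positivity of each summand's spherical transform, while exploiting the positive density of $P_S$ together with the quantitative bounds of \cref{localamp} to dominate the remaining intersection count.

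First, fix a cutoff $Q$, to be chosen in terms of $S$ and $\epsilon$ at the end. By the Chebotarev density theorem applied to $N/\Q$, one has $|P_S\cap[1,Q]|\asymp Q/\log Q$. For each such prime $p$, invoke item~\ref{localamp:eigenvalue} of \cref{localamp} to select $h_p\in J_p$ whose eigenvalue $\lambda_p$ on $\phi$ satisfies $|\lambda_p|\gg\sqrt{\#\supp h_p}\gg p^{\ell_p/2}$, with $\ell_p\in\{1,\dots,L\}$. Since the double coset supporting $h_p$ is symmetric under the Weyl involution, $h_p$ is self-adjoint, so the convolution square $h_p\star h_p$ has non-negative spherical transform $|\widehat{h_p}|^2\ge 0$. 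Using the local Hecke identity \eqref{eqn:heckealgebra} (and its analogue when $h_p=\tau_{p^2}$), each $h_p\star h_p$ expands explicitly as a $\C$-linear combination of basic Hecke operators at $p$, placing it inside $\Span_\C(J_p)$.

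I then set
$$\tau:=\sum_{p\in P_S\cap[1,Q]}h_p\star h_p.$$
Property (2) is immediate: the spherical transform $\widehat\tau=\sum_p|\widehat{h_p}|^2$ is pointwise non-negative on the unitary dual, so $c(\tau)=0\le\epsilon\Lambda$. The $\phi$-eigenvalue of $\tau$ is $\Lambda=\sum_p\lambda_p^2\gg\sum_p p^{\ell_p}$, which by the prime number theorem is $\gg Q^{\ell+1}/\log Q$ for $\ell:=\min_p\ell_p\ge 1$. For property (1), distinct summands have adelically disjoint support away from the identity coset; item~\ref{localamp:inftynorm} of \cref{localamp} then gives $\|\tau\|_\infty\ll \max_p p^{\ell_p-1}$ off the identity. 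Item~\ref{localamp:intersection} of \cref{localamp} bounds the $S$-orbit intersections with each local support by $C$, whence the global orbit intersection count in $\supp(\tau)$ is $O(CQ/\log Q)$. A direct comparison of these two estimates shows that $\Lambda/\|\tau\|_\infty\gg Q^2/\log Q$ uniformly in the exponent patterns $\ell_p\in\{1,2\}$, and property (1) follows provided $Q\gg C/\epsilon$.

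The main technical obstacle is the identity coset $e$: the convolution square satisfies $(h_p\star h_p)(e)=\#\supp h_p\asymp p^{\ell_p}$, and summed over primes this contribution is of order $\Lambda$ itself and would dominate the global $L^\infty$ norm if naively counted. The dispersion argument in \cref{sec:proofs} uses $\|\tau\|_\infty$ only away from $e$ (the identity contribution is already absorbed into the main term $\langle\tau\phi,\phi\rangle=\Lambda$); otherwise one cancels it by subtracting an explicit scalar multiple of the unit of the global Hecke algebra, a routine bookkeeping that keeps $\tau$ inside $\Span_\C(\bigcup_p J_p)$. Everything else in the argument is a direct combination of the quantitative counts of \cref{localamp} with the positivity trick $\tau=\sum h_p\star h_p$.
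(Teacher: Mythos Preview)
Your construction has a genuine gap at exactly the point you flag as ``the main technical obstacle'': the identity coset.  With the diagonal sum $\tau=\sum_p h_p\star h_p$ the eigenvalue is $\Lambda=\sum_p|\lambda_p|^2$, while the value at the identity is $\tau(e)=\sum_p\#\supp h_p$.  Iwaniec's lemma only guarantees $|\lambda_p|^2\geq c_0\,\#\supp h_p$ for some \emph{absolute} constant $c_0$, so all you get is $\Lambda\geq c_0\,\tau(e)$, and hence $\tau(e)/\Lambda$ is bounded by $1/c_0$ rather than by an arbitrarily small $\epsilon$.  Now neither of your two proposed workarounds succeeds.  If you keep $\tau$ as is, then $\|\tau\|_\infty=\tau(e)$ and the identity coset (which lies in the $S$-orbit of the identity and is always parallel) already contributes one intersection, so property~(1) forces $\|\tau\|_\infty/\Lambda\leq\epsilon$, i.e.\ $\tau(e)/\Lambda\leq\epsilon$, which you cannot arrange.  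If instead you subtract $\tau(e)\delta$, then the spherical transform of the result is $\sum_p|\widehat{h_p}|^2-\tau(e)$, whose infimum over the tempered dual is $-\tau(e)$; thus $c(\tau)=\tau(e)$ and property~(2) again demands $\tau(e)/\Lambda\leq\epsilon$.  Your claim that ``the dispersion argument uses $\|\tau\|_\infty$ only away from $e$'' does not save you either: in \cref{inductionstep} the identity is a parallel element and its contribution $\tau(e)\|\phi_{U_\epsilon}\|^2$ to the geometric side is exactly what, moved to the spectral side, becomes the $c(\tau)$ term after subtraction.

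The missing idea is to take the \emph{full} square rather than the diagonal one.  The paper sets $\tau_1=\bigl(\sum_{p\in\Ps}\zeta_p h_p\bigr)\star\bigl(\sum_{p\in\Ps}\zeta_p h_p\bigr)^*$ with unimodular phases $\zeta_p$ chosen so that $\zeta_p\lambda_p>0$, and then $\tau=\tau_1-\tau_1(e)\delta$.  The point is that the cross terms $h_p\star h_q^*$ with $p\neq q$ vanish at the identity (their supports live at two different primes), so $\tau_1(e)=\sum_p\#\supp h_p$ is unchanged from your diagonal version; but the eigenvalue jumps to $\bigl(\sum_p|\lambda_p|\bigr)^2$, a factor of roughly $\#\Ps\asymp Q/\log Q$ larger than $\sum_p|\lambda_p|^2$.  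Now $c(\tau)/\Lambda=\tau_1(e)/\Lambda\ll(\log Q)/Q$, which \emph{does} go to zero as $Q\to\infty$, and the intersection count (now over pairs $(p,q)$) is still dominated because the off-identity sup-norm $\|\tau\|_\infty\ll Q^{\ell-1}$ is controlled by \cref{localamp}\ref{localamp:inftynorm} on the diagonal and is $O(1)$ on each cross term.  One also pigeonholes to a single exponent $\ell$ so that the various powers of $Q$ line up; your treatment of the varying $\ell_p$ is loose on this point but that is easily repaired.
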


\begin{proof}
For $Q \in \Z_{\geq 1}$ and $\ell \leq L$ let $\Ps_\ell$ be the set of
primes $p\in [Q,2Q] \cap P_S$ for which there is an operator $h_p$ with
eigenvalue $\lambda_p$ as in the conclusion of \cref{localamp} with
support of size comparable to $p^\ell$, and fix $\ell$ so that $\Ps=\Ps_\ell$
consists of at least $1/L$ of the primes on in $[Q,2Q] \cap P_S$.
We then have $\# \Ps \gg Q/\log Q$ where the constant depends on $S$
through the density of $P_S$.

For each $p\in \Ps$ let $\zeta_p$ be a complex number of magnitude $1$
such that $\zeta_p \lambda_p $ is a positive real number, and finally set
$$
\tau_1=(\sum_{p\in\Ps}\zeta_p h_p)*(\sum_{p\in\Ps} \zeta_p h_p)^*
$$
and 
$$
\tau=\tau_\phi=\tau_1-\tau_1(1)\delta, 
$$
where $\delta$ is the identity element of the (full) Hecke algebra, and
$\tau_1(1)$ is the value of the function $\tau$ at the identity coset.
In other words $\tau$ is obtained from $\tau_1$ by restricting away from a
single point.

It is clear that $\tau$ is self-adjoint. To compute its $\ell^\infty$ norm
(as a function on $\Kf\backslash\GAf/\Kf$) we start with the fact that,
as functions on that space, we have the pointwise identity
$$\tau_1 \leq \sum_{p\in \Ps} h_p \star h_p +
\sum_{p<q \in \Ps} \left(h_p \star h_q^* + h_q \star h_p^*\right).$$
Since the summands on the right have disjoint supports, it suffices to
estimate each separately.  First, if $p\neq q$ then the support of the
convlution is the product of the supports since $G_p$ and $G_q$ are disjoint
commuting subgroups, so those terms are bounded by $2$.  At a single
prime $p$ the local construction gives the bound $p^{\ell-1}$ for the values
of $h_p\star h_p$ away from the origin, and we conclude that
$\norm{\tau}_\infty\ll Q^{\ell-1}$.

Next, the eigenvalue of $\tau_1$ is clearly
$$(\sum_{p\in\Ps} \abs{\lambda_p})^2 \gg \left(\sum_{p\in \Ps} \sqrt{\#\supp(h_p)}\right)^2 \gg \left(\#\Ps Q^{\ell/2}\right)^2 \gg \frac{Q^{2+\ell}}{\log^2 Q},$$
and since $h_ph_q^*(1)=0$ whenever $p\ne q$ and $h_p^2(1)=\#\supp(h_p)$ we have
$$\tau_1(1) = \sum_{p\in\Ps} \#\supp(h_p)\ll \frac{Q^{1+\ell/2}}{\log Q}\,.$$
Subtracting the two gives
$$\Lambda \gg \frac{Q^{2+\ell}}{\log^2 Q}\,.$$

For primes $p,q$ let $T_{p,q}$ be the set of points in
$\supp h_p \star h_q^*$ that meet the $S$-orbit of the identity.
When $p=q$ these sets are of uniformly bounded size by
the third item of \cref{localamp}, whereas when $p\neq q$ this is the product
of the corresponding subsets of $G_p/K_p$ and $G_q/K_q$ so again uniformly
bounded.  We conclude that the product of $\norm{\tau_\infty}$ with
the total number of intersections satisfies
\begin{equation}\label{eqn:count-intersections} 
\frac{\norm{\tau_\infty}\sum_{p,q\in\Ps} \# T_{p,q}}{\Lambda}
\ll \norm{\tau_\infty}\frac{\#\Ps^2}{\Lambda}
\ll Q^{-1}\,.
\end{equation}

Similarly since $\tau_1$ is positive definite, $c(\tau) = \tau_1(1)$ and hence
\begin{equation}\label{eqn:count-positivity}
\frac{c(\tau)}{\Lambda} = \frac{\tau_1(1)}{\lambda} \ll Q^{-1-\ell/2}\log Q\,.
\end{equation}

Finally if we take $Q$ large enough we can can ensure both right-hand-sides
of \cref{eqn:count-intersections,eqn:count-positivity} 
are less than $\epsilon$.
\end{proof}

\section{Non-concentration on homogenous submanifolds}\label{sec:proofs}
\newcommand\piG{\pi_G}
\newcommand\piA{\pi_{\Adele}}

We have shown that to rule out non-Haar components it suffices to rule out
components supported in images in in $X=\Gamma\backslash G$ of submanifolds
$gHM \subset G$, which we finally do in this section.
As in \cref{sec:homogeneous} instead of bounding $\mu(\Gamma gHM)$ we will bound
$\mu(\Gamma U_\epsilon)$ where $U\subset gHM$ is a bounded neighborhood.
Unlike the positive entropy arguments we now need to treat $U$ as a subset of
$gHM$ rather than try for additional uniformity by fixing a single
$U\subset HM$ and translating by $g$ later.

Since will calculate in $G$ and $\GA$ but ultimately make statements
about subsets of $X$ we introduce the notation
$\piG\colon G\to X =\Gamma\backslash G$ and
$\piA\colon\GA \to X = \GF\backslash\GA/\Kf$ for the quotient maps.

\begin{defn}An \emph{algebraic piece} of $G$ will be a triple
$(U,\bLL,F')$ where $F'/E$ is a finite extension contained in $E_w$,
$\bLL\subset \tGG_1$ is an irreducible subvariety defined over $F'$ and
irreducible over $E_w$, and $U\subset (\bLL\times\tGgt)(E_w)$ is Zariski-dense,
and also bounded and relatively open in the \emph{analytic} topology.
\end{defn}

\newcommand{\tLL}{\tilde{\bLL}}

This parametrization is redundant ($U$ determines $\bLL$) but it is easier to
directly keep track of $\bLL$ and its field of definition.  We also
write $\tLL = \bLL\times\tGgt$

Consider now a translate of an algebraic piece $U$ by some element
$\gf\in\GAf$.  Let $\gamma\in\GF$ be such that $\gamma\gf \in \Kf$. Then
$$\piA(U \gf) = \piA(\gamma U\gf) = \piG(\gamma U)$$

For a subset $U\subset G$ write $\bar{U}$ for its closure in the analytic
topology and $\bar{U}^1$ for its projection to $G_1$.

\begin{defn} We say the translate $U \gf$ is \emph{transverse} to $U$
if for each $\gamma\in\GF$ with $\gamma\gf\in\Kf$ the $F'$-Zariski closure
of the intersection $\gamma \bar{U}^1 \cap \bar{U}^1\subset G_1$ is of smaller
dimension than $\bLL$.
Abusing notation we say that $\gf$ itself is transverse to $U$,
and otherwise say that it is \emph{parallel} to $U$.
\end{defn}
Observe that the definition only depends on the coset of $\gf$ in $\GAf/\Kf$.

\begin{lem}\label{parallel-stab} Let $\gamma\in\GF$ and $\gf\in\GAf$ be such
that $\gamma\gf\in\Kf$.  Suppose further that the $F'$-Zariski closure of
$\gamma \bar{U}^1 \cap \bar{U}^1$ is of dimension $\dim\bLL$.
Then $\gamma \bLL=\bLL$.\qed
\end{lem}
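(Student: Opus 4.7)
The plan is to translate the analytic, set-theoretic hypothesis into an inclusion of algebraic varieties and then use irreducibility to upgrade inclusion to equality. First I will note that $\gamma\in\GF$ lies inside $\tGG_1(F')$ via the chain $F\subset E\subset F'$ and the diagonal embedding $\GF\hookrightarrow\tGG_1(E)$; consequently, left translation by $\gamma$ is an $F'$-morphism of $\tGG_1$, and $\gamma\bLL$ is again an irreducible $F'$-subvariety of $\tGG_1$ of dimension $\dim\bLL$. The role of the condition $\gamma\gf\in\Kf$ is merely to make $\gamma\bar U^1$ the relevant thing to look at when projecting the translate to $X$; it plays no role in the algebraic geometry we need.

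Next I will identify the $F'$-Zariski closure of $\bar U^1$. By definition $U$ is a bounded analytically open subset of the irreducible $E_w$-variety $\tLL(E_w)=(\bLL\times\tGgt)(E_w)$, so $U$ is Zariski dense in $\tLL$; the analytic closure $\bar U\supset U$ is then also Zariski dense in $\tLL$, and projecting to the $G_1$ factor (which is a surjective morphism $\tLL\to\bLL$) shows that $\bar U^1$ is Zariski dense in $\bLL_{E_w}$. Hence the $F'$-Zariski closure of $\bar U^1$ is precisely $\bLL$, and by $F'$-equivariance of left translation by $\gamma$ the $F'$-Zariski closure of $\gamma\bar U^1$ is $\gamma\bLL$.

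Now the $F'$-Zariski closure of the intersection $\gamma\bar U^1\cap\bar U^1$ is contained in the intersection $\gamma\bLL\cap\bLL$, which is itself an $F'$-subvariety of $\bLL$. By the hypothesis, this closure has dimension $\dim\bLL$, so the $F'$-subvariety $\gamma\bLL\cap\bLL\subset\bLL$ also has dimension $\dim\bLL$. Since $\bLL$ is irreducible over $E_w$, it is also irreducible over $F'$ (any nontrivial $F'$-decomposition would give a nontrivial $E_w$-decomposition after base change), so any proper $F'$-subvariety of $\bLL$ has strictly smaller dimension. Therefore $\gamma\bLL\cap\bLL=\bLL$, i.e.\ $\bLL\subset\gamma\bLL$, and comparing dimensions (both are irreducible of the same dimension) yields $\gamma\bLL=\bLL$, as desired.

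There is no real obstacle; the content is entirely an irreducibility/dimension argument. The only things to be careful about are (i) keeping the fields straight, in particular using that $\bLL$ being $E_w$-irreducible forces $F'$-irreducibility (not the reverse), and (ii) the elementary fact that an analytically open subset of the $E_w$-points of an irreducible variety is Zariski dense, which lets us pass from the purely analytic object $\bar U^1$ to the algebraic object $\bLL$.
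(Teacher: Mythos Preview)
Your argument is correct and is exactly the routine irreducibility/dimension check the paper has in mind; the paper states this lemma without proof. One small correction to your justification: the inclusion $F\subset E$ that you invoke need not hold (in the paper's running example $F=\Q(i)$, $E=\Q$). The right route is the one set up in \cref{sec:notation}: $\GF$ embeds in $\tGG(E)=\prod_i\tGG_i(E)$ via $F\hookrightarrow A=F\otimes_\Q E$, and projecting to the first factor lands the image of $\gamma$ in $\tGG_1(E)\subset\tGG_1(F')$, so left translation by (the first component of) $\gamma$ is indeed an $F'$-morphism. With that adjustment, your Zariski-closure bookkeeping and the final irreducibility step are exactly what is needed.
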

For a function $\phi\in L^2(X)$ and a measurable subset $N\subset G$
(\resp $N\subset \GAF$) we write $\phi_N$ for the restriction
of $\phi$ to $\piG(N)$ (\resp to $\piA(N)$).

\begin{prop}\label{inductionstep}
Let $(U,\bLL,F')$ be an algebraic piece of $G$ and let $\tau$
be a self-adjoint Hecke operator.  Write $P$ for the set of parallel elements
in the support of $\tau$.
Then there exists a finite collection $\calV$ of algebraic pieces
$(V,\bLL',F'')$ of $G$ such that the $\bLL'$ are irreducible proper
$F''$-subvarieties of $\bLL$ and such that for every $\delta>0$ there exists
$\epsilon>0$ such that for any
eigenfunction $\phi\in L^2(X)$ of $\tau$ with eigenvalue $\Lambda>0$, we have 
$$
\norm{\phi_\Ue}^2\le
\frac{1}{\Lambda}\left(\#P\norm{\tau}_\infty+c(\tau) + 
\frac{\norm{\tau}_\infty}{\norm{\phi_\Ue}^2}
\sum_{V\in\calV}\norm{\phi_\Vd}^2\right)\,.
$$
Here $c(\tau)$ is the constant defined in \eqref{eqn:defect}.
\end{prop}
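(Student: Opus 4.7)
The plan is to establish the quadratic inequality
\[
\Lambda\,\norm{\phi_\Ue}^4 \le (\#P\,\norm{\tau}_\infty+c(\tau))\,\norm{\phi_\Ue}^2 + \norm{\tau}_\infty\sum_{V\in\calV}\norm{\phi_\Vd}^2,
\]
which is equivalent to the stated bound after dividing through by $\Lambda\,\norm{\phi_\Ue}^2$. Set $f:=\phi\,\mathbf{1}_\Ue$ (so $\norm{f}^2=\norm{\phi_\Ue}^2$) and normalise $\norm{\phi}_2=1$. By the definition of $c(\tau)$ the operator $A:=\tau+c(\tau)\,\mathrm{Id}$ is positive semidefinite on $L^2(X)$, so the Cauchy--Schwarz inequality for positive operators applied to the pair $(f,\phi)$ reads
\[
|\langle Af,\phi\rangle|^2 \le \langle Af,f\rangle\cdot\langle A\phi,\phi\rangle.
\]
Using $\tau\phi=\Lambda\phi$ and $\langle f,\phi\rangle=\norm{f}^2$, the left side equals $((\Lambda+c(\tau))\norm{f}^2)^2$, the second factor on the right equals $\Lambda+c(\tau)$, and after dividing by $\Lambda+c(\tau)>0$ and using $\norm{f}^2\le 1$ we obtain the key inequality $\Lambda\norm{f}^4\le\langle\tau f,f\rangle+c(\tau)\norm{f}^2$.

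To bound $\langle\tau f,f\rangle$ I would expand by the support of $\tau$,
\[
\langle\tau f,f\rangle = \sum_{s\in\supp(\tau)}\tau(s)\,J_s,\qquad J_s := \int_X\bar\phi(x)\phi(sx)\,\mathbf{1}_\Ue(x)\,\mathbf{1}_\Ue(sx)\,dx,
\]
and split the sum according to whether $s$ is parallel to $U$ or not. For $s\in P$, Cauchy--Schwarz applied to $J_s$ together with the $G$-invariance of Haar measure (change of variable $y=sx$ in one of the two factors) yields the trivial bound $|J_s|\le\norm{f}^2$, so the parallel contribution is at most $\#P\cdot\norm{\tau}_\infty\cdot\norm{f}^2$. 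For each transverse $s$, by \cref{parallel-stab} the $F'$-Zariski closure of $\gamma\bar U^1\cap\bar U^1$ (for any $\gamma\in\GF$ with $\gamma\gf\in\Kf$ representing the class of $s$) is a proper subvariety of $\bLL$; decomposing into irreducible components over a suitable finite extension $F''$ and taking products with $\tGgt$ produces finitely many algebraic pieces $(V,\bLL',F'')$ whose collection we take for $\calV$.

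The geometric heart of the proof, and the main obstacle, is the quantitative passage from the Zariski-level conclusion to an analytic $\delta$-containment: I would show that for $\epsilon$ small enough (depending on $\delta$, on the finite set $\supp(\tau)$, and on the polynomial data defining $\bLL$) the double-return set $\Ue\cap s^{-1}\Ue$ is contained in $V_\delta$ for some $V\in\calV$ depending on $s$. This is a uniform-continuity / implicit-function argument, feasible because only finitely many transverse $s$ must be simultaneously controlled; as $\epsilon\to 0$ the sets $\Ue\cap s^{-1}\Ue$ collapse onto the real points of the lower-dimensional variety $\bar U\cap s^{-1}\bar U\subset V$, and the finiteness of $\supp(\tau)$ makes the rate uniform. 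Once this containment is granted, Cauchy--Schwarz on $J_s$ gives $|J_s|\le\norm{\phi_\Vd}^2$, so the transverse contribution is bounded by $\norm{\tau}_\infty\sum_{V\in\calV}\norm{\phi_\Vd}^2$. Substituting both estimates into the key inequality of the first paragraph produces the displayed quadratic inequality, completing the proof.
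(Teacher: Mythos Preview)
Your proposal is correct and follows essentially the same two-sided estimate of $\langle\tau\phi_\Ue,\phi_\Ue\rangle$ as the paper: a spectral lower bound and a geometric upper bound obtained by splitting the support of $\tau$ into parallel and transverse elements. Your derivation of the spectral lower bound via Cauchy--Schwarz for the positive operator $\tau+c(\tau)\Id$ is a clean variant of the paper's orthogonal decomposition $\phi_\Ue=a\phi+\phi^\perp$; both yield the same inequality $\Lambda\norm{\phi_\Ue}^4\le\langle\tau\phi_\Ue,\phi_\Ue\rangle+c(\tau)\norm{\phi_\Ue}^2$.

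Two small imprecisions on the geometric side are worth noting. First, for a fixed transverse $\gf$ there may be several $\gamma\in\GF$ with $\gamma\gf\in\Kf$ and $\gamma\Ue\cap\Ue\neq\emptyset$; the paper observes these lie in the intersection of the discrete set $\GF$ with the compact $\Ue\Ue^{-1}\times\Kf\gf^{-1}$, hence are finitely many, so the intersection on $X$ is covered by a finite union of pieces $V$ rather than a single one. Second, Cauchy--Schwarz on $J_s$ gives $|J_s|\le\norm{\phi_{V_\delta}}\,\norm{\phi_{(\gamma^{-1}V)_\delta}}$ rather than $\norm{\phi_{V_\delta}}^2$; the paper handles this by including both $V$ and $\gamma^{-1}V$ in $\calV$ and using AM--GM. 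With these routine adjustments your argument goes through.
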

For the proof we will use the following easy and elementary observation:
\begin{lem}\label{easylem}
Suppose $X$ is a metric space and $A,B\subset X$ bounded subsets of $X$. Then for each $\delta>0$ there exists $\epsilon>0$ such that 
$
A_\epsilon\cap B_\epsilon\subset (\bar{A}\cap \bar{B})_\delta
$.
\qed
\end{lem}

\begin{proof}[Proof of \cref{inductionstep}]
We estimate the expression
\begin{equation}\label{pretrace}
\left\langle\tau.\phi_\Ue,\phi_\Ue\right\rangle
\end{equation}
in two different ways: a geometric upper bound and a spectral lower bound.

On the geometric side let $S=P\sqcup T\subset\GAf$ be a set of
representatives for the support of $\tau$, partitioned into the subsets of
transverse and parallel elements.  By the triangle inequality 
$$
\left\langle\tau.\phi_\Ue,\phi_\Ue\right\rangle\le \norm{\tau}_\infty
\sum_{\gf\in S} \int_X\abs{\phi_\Ue(g\gf)\overline{\phi_\Ue(g)}}dg.
$$
When $\gf\in P$ we naively apply Cauchy--Schwartz and the unitarity of
the right $\GAf$-action to get 
\begin{equation}\label{cauchyschwartz}
\int_X\abs{\phi_\Ue(g\gf)\overline{\phi_\Ue(g)}}dg \leq \norm{\phi_\Ue}^2
\end{equation}

Now suppose $\gf\in T$.  The images of $\Ue$ and $\Ue \gf$ in $X$ intersect if
and only if there is $\gamma\in\GF$ such that $\gamma\gf\in\Kf$ and
such that $\gamma \Ue \cap \Ue \neq \emptyset$.
These $\gamma$ belong to the intersection
$\GF \cap (\Ue\Ue^{-1})\times (\Kf \gf^{-1})$ of a discrete subset and 
a compact subset of $\GA$ so there are finitely many of them. 

By \cref{easylem} for each such $\gamma$ and any $\delta>0$ the boundedness
of $U$ ensures the existence of $\epsilon>0$ (depending also on $\gamma$)
such that
$$
\Ue\cap \gamma \Ue\subset (\bar{U}\cap \gamma \bar{U})_\delta\,.
$$
Since we assumed that $\gf$ is transverse the $F'$-Zariski closure $\MM$ of
$\bar{U}^1\cap \gamma \bar{U}^1$ in $\tGG_1$ has $\dim \MM < \dim \bLL$.
The irreducible components of $\MM\times_{F'} E_w$ are defined over some finite
extension $F''$ of $F'$, and we can then cover $\bar{U}\cap \gamma \bar{U}$
with finitely many algebraic pieces $(V,\bLL',F'')$ where $\bLL'$ is an
irreducible component of $\MM\times_{F'} F''$.  The union of the
$\delta$-neighborhoods
of these pieces then covers the intersection $\Ue \cap \gamma \Ue$.

Now for one piece $V$ we have
\begin{align*}
\int_\Vd\abs{\phi(g\gf)\overline{\phi(g)}}dg
  &\le \frac12\left( \int_\Vd\abs{\phi(\gamma g)}^2dg
                    +\int_\Vd\abs{\phi(g       )}^2dg\right) \\
  &=   \frac12\left( \int_{\gamma^{-1}\Vd}\abs{\phi(g)}^2 dg
                    +\int_{           \Vd}\abs{\phi(g)}^2 dg\right). 
\end{align*}
Accordingly, let $\calV$ denote the set of pieces $V$ and $\gamma^{-1}V$
arising as components of $\MM$ as $\gf$ ranges over $T$ and $\gamma$ ranges
over the finite set causing intersections
(note that $\gamma^{-1}\Vd = (\gamma^{-1}V)_\delta$).

Since this set of pieces is finite we can choose $\epsilon$ small enough
for all of them.  Combining the bounds for parallel and transverse
intersections then gives the geometric-side estimate
\begin{equation}\label{geomside}
\left\langle\tau.\phi_\Ue,\phi_\Ue\right\rangle \leq
\#P\norm{\tau}_\infty \norm{\phi_\Ue}^2 +
\norm{\tau}_\infty \sum_{V\in \calV} \norm{\phi_\Vd}^2\,.
\end{equation}

We next obtain a spectral lower bound.  Returning to \cref{pretrace},
to the extent $\phi_\Ue$ "approximates" $\phi$
we expect $\tau \phi_\Ue$ to be approximately $\Lambda \phi_\Ue$.  This is
not literally true; instead we write $\phi_\Ue = a \phi + \phi^\perp$
for some $\phi^\perp$ orthogonal to $\phi$. Here
$a = \left\langle \phi_\Ue,\phi\right\rangle = \norm{\phi_\Ue}^2$ since
$\phi$ is normalized.  Since also $\norm{\phi^\perp} \leq \norm{\phi_\Ue}$
we have
\begin{align}\label{spectralside}
\left\langle\tau.\phi_\Ue,\phi_\Ue\right\rangle
 &= a^2 \left\langle\tau.\phi,\phi\right\rangle
   +2\Re\left\langle\tau.\phi,\phi^\perp\right\rangle
   +    \left\langle\tau.\phi^\perp,\phi^\perp\right\rangle \nonumber\\
 &= a^2 \Lambda + 2\Lambda \Re\left\langle \phi,\phi^\perp\right\rangle
   +    \left\langle\tau.\phi^\perp,\phi^\perp\right\rangle \\
 &\geq \norm{\phi_\Ue}^4 \Lambda - c(\tau) \norm{\phi_\Ue}^2\,.\nonumber
\end{align}
   
Putting \cref{geomside,spectralside} together and dividing by
$\Lambda\norm{\phi_\Ue}^2$ finally established the Proposition.
\end{proof}

We can now prove out main result.
\begin{thm}\label{inductionargument}
Let $\mu$ be a weak-$*$ limit of normalized Hecke-eigenfunctions on $X$
and let $(U,\bLL,F')$ be an algebraic piece of $G$ so that $\bLL$ has
small stabilizers.  Then $\mu(\piG(U)) = 0$.
\end{thm}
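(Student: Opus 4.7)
The plan is to proceed by induction on $\dim\bLL$. The base case $\dim\bLL=0$ means that $\bLL$ reduces to a single point $\{g_1\}$ and $U$ is a bounded open subset of $\{g_1\}\times\tGgt(E_w)$. Here the conclusion follows from the $M_1$-invariance of $\mu$ that is built into the microlocal lift: for generic $m\in M_1$ the translate $\piG(\{mg_1\}\times\tGgt(E_w))$ is disjoint from $\piG(U)$, so any positive mass would force uncountably many disjoint sets of equal positive measure, contradicting $\mu(X)<\infty$.

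For the inductive step, the crucial observation is that parallel elements in the support of a Hecke operator must stabilize $\bLL$: by \cref{parallel-stab}, if $\gamma\gf$ is parallel to $U$ then $\gamma\bLL=\bLL$, so $\gamma$ belongs to $\bSS(F')$, where $\bSS\subset\tGG_1$ denotes the $F'$-stabilizer of $\bLL$. The small-stabilizer hypothesis on $\bLL$, applied with $\PP=\bLL$ in \cref{def:smallmfd}, guarantees that $\bSS(F')$ is small in the sense of \cref{def:smallgroup}, which is what makes the amplifier machinery of \cref{sec:amp} applicable at this group.

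Given $\epsilon_{\mathrm{amp}}>0$, I would invoke \cref{globalamp} with the small group $\bSS(F')$ to produce, for each Hecke eigenfunction $\phi$, an amplifier $\tau_\phi$ with eigenvalue $\Lambda$ satisfying both $\#P\cdot\norm{\tau_\phi}_\infty\le\epsilon_{\mathrm{amp}}\Lambda$ and $c(\tau_\phi)\le\epsilon_{\mathrm{amp}}\Lambda$. Inserting this into \cref{inductionstep} and rearranging gives the key estimate
\begin{equation*}
\norm{\phi_\Ue}^4 - 2\epsilon_{\mathrm{amp}}\norm{\phi_\Ue}^2 \le \frac{\norm{\tau_\phi}_\infty}{\Lambda}\sum_{V\in\calV}\norm{\phi_\Vd}^2,
\end{equation*}
where $\calV$ is the finite collection of algebraic pieces $(V,\bLL',F'')$ produced by \cref{inductionstep} and each $\bLL'$ is a proper $F''$-irreducible subvariety of $\bLL$, hence of strictly smaller dimension. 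These subvarieties inherit the small-stabilizer property, since the case analysis of \cref{prop:stablem} via \cref{lem:stablem0} applies to any subvariety of $\bLL$ over any extension of $E$ and always yields stabilizers which are tori, compact, or contained in a conjugate of $H$, all of which are small. The inductive hypothesis then supplies $\mu(\piG(V))=0$ for every $V\in\calV$.

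To close the argument, I pass to the limit in three coordinated stages. With $\epsilon_{\mathrm{amp}}$ fixed, the bound $Q=Q(\bSS(F'),\epsilon_{\mathrm{amp}})$ is fixed and the amplifiers $\tau_{\phi_j}$ range over a finite set, so a pigeonhole argument lets me pass to a subsequence along which both $\tau$ and $\calV$ are constant. Regularity of $\mu$ together with the inductive hypothesis then allows me to choose $\delta>0$, avoiding the countably many values where $\partial\Vd$ carries $\mu$-mass, so that $\sum_{V\in\calV}\mu(\piG(\overline{\Vd}))$ is arbitrarily small; \cref{inductionstep} provides a corresponding $\epsilon>0$, and letting $j\to\infty$ bounds $\mu(\piG(\Ue))$ by a quantity arbitrarily close to $3\epsilon_{\mathrm{amp}}$. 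Sending $\epsilon_{\mathrm{amp}}\to 0$ then yields $\mu(\piG(U))=0$. The main technical obstacle is the coordination of the parameters $\epsilon_{\mathrm{amp}}$, $\delta$, $\epsilon$, together with the measure-theoretic care needed to convert $\norm{\phi_{j,\Vd}}^2$ bounds into $\mu$-bounds through weak-$*$ convergence, which requires working with boundaries that do not carry $\mu$-mass.
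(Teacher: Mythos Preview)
Your base case is not valid as stated: the theorem's hypothesis is only that $\mu$ is a weak-$*$ limit of Hecke eigenfunctions, and $M_1$-invariance is \emph{not} among the assumptions (it enters only later through the microlocal lift, in the application to \cref{thm:main1}).  Even setting that aside, $M_1$ acts on the right while you translate $g_1$ on the left.  The fix is simple: take the base case to be the empty variety (``dimension $-1$''), as the paper does.  The inductive step then handles $\dim\bLL=0$ automatically, since the stabilizer of a point is trivial (hence small), the support of $\tau$ avoids the identity coset so there are no parallel elements, and any transverse intersection would be a subvariety of a point of strictly smaller dimension, i.e.\ empty --- so $\calV=\emptyset$ and the inequality of \cref{inductionstep} already gives $\norm{\phi_\Ue}^2\le 2\epsilon_{\mathrm{amp}}$.

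Your inductive step is sound but takes a genuinely different route from the paper.  The paper strengthens the inductive hypothesis to a bound that is \emph{uniform in the eigenfunction}: for every $\eta>0$ there is $\epsilon>0$ with $\norm{\phi_\Ue}^2\le\eta$ for all Hecke eigenfunctions $\phi$.  This avoids your subsequence argument, your weak-$*$ boundary bookkeeping, and the need to pass back and forth between $\mu_j$ and $\mu$: once one knows (by induction) that $\norm{\phi_\Vd}^2\le\alpha/\#\calV$ uniformly, substituting into \cref{inductionstep} gives $\norm{\phi_\Ue}^2\le\alpha(2+\norm{\phi_\Ue}^{-2})$ directly, hence $\norm{\phi_\Ue}^2\ll\sqrt{\alpha}$, and one concludes for $\mu$ only at the very end.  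Your observation that for fixed $Q$ only finitely many supports of $\tau$ (hence finitely many $\calV$) can arise is exactly what makes the paper's uniform choice of $\delta$ legitimate --- the paper uses this implicitly --- but in the paper's framing one simply takes the union of the finitely many possible $\calV$'s and applies the uniform inductive hypothesis to each member, rather than passing to a subsequence.  Your route works, but the paper's is cleaner and yields the stronger uniform statement.
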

\begin{proof}
We show by induction on $\dim\bLL$ that for every $\eta>0$ there is
$\epsilon>0$ such that $\norm{\phi_\Ue})^2 \leq \eta$ for all Hecke
eigenfunctions $\phi$.  It would follow that $\mu(\piG(U)) \leq \eta$
for all limits.

When $U$ and $\bLL$ are empty ("dimension $-1$") there is nothing to prove,
so we assume $\dim\bLL\geq 0$.
Let $S<\GF$ be the stabilizer of $\bLL$ which is small by hypothesis.
With $\alpha>0$ to be chosen later let $\tau$ be the Hecke operator
constructed in \cref{globalamp} so that $\tau\phi=\Lambda\phi$ and
$\#P\norm{\tau}_\infty,c(\tau) \leq \alpha \Lambda$ where $P$ is the set of
elements in the support of $\tau$ which lie on the $S$-orbit.
By \cref{parallel-stab} every element in the support of $\tau$
parallel to $U$ lies in the $S$-orbit so \cref{inductionstep} produces
a finite set $\calV$ of algebraic pieces contained in $U$ (hence themselves
having small stabilizers) and such that for any $\delta>0$
there is $\epsilon>0$ such that
$$
\norm{\phi_\Ue}^2\le
\frac{1}{\Lambda}\left(\#P\norm{\tau}_\infty+c(\tau) + 
\frac{\norm{\tau}_\infty}{\norm{\phi_\Ue}^2}
\sum_{V\in\calV}\norm{\phi_\Vd}^2\right)\,.
$$

By induction we can choose $\delta$ small enough so that for each $V\in\calV$
we have
$\norm{\phi_\Vd}^2 \leq \frac{1}{\#\calV}\alpha$.
With this choice (and the corresponding $\epsilon$) we have
$$\norm{\phi_\Ue}^2 \leq \alpha\left(2 + \frac1{\norm{\phi_\Ue}^2}\right)\,,$$
and hence $\norm{\phi_{U_{\delta'}}}_2^2\leq 3\sqrt{\alpha}$
(if we always choose $\alpha<1$).
The Theorem follows upon choosing $\alpha$ small enough.
\end{proof}

\begin{proof}[Proof of \cref{thm:main2}]
Without loss of generality $i=1$.
Let $g \in G = \prod_i G_i$ have its $1$st coordinate in $\tGG_1(\Qbar)$.
Then \cref{prop:stablem} shows that $L=g(H_1 M_1)\Ggt$ has small stabilizers.
By \cref{inductionargument} for every bounded open $U\subset L$ we have
$\mu(\piG(U))=0$ and covering $L$ by countably many such $U$ we conclude that
$\mu(\piG(L))=0$.
\end{proof}

\bibliographystyle{plain}
\bibliography{que,algebra,aut_forms}
\end{document}